\newcommand{\ang}[1]{ \left< {#1} \right>}
\renewcommand{\mid}{{\mathrm{mid}}}
\DeclareRobustCommand\widecheck[1]{{\mathpalette\@widecheck{#1}}}
\def\@widecheck#1#2{%
   \box\z@\hbox{\m@th$#1#2$}%
   \box\tw@\hbox{\m@th$#1%
      \widehat{%
         \vrule\@width\z@\@height\ht\z@
         \vrule\@height\z@\@width\wd\z@}$}%
   \dp\tw@-\ht\z@
   \@tempdima\ht\z@ \advance\@tempdima2\ht\tw@ \divide\@tempdima\thr@@
   \box\tw@\hbox{%
      \raise\@tempdima\hbox{\scalebox{1}[-1]{\lower\@tempdima\box\tw@}}}%
   {\ooalign{\box\tw@ \cr \box\z@}}}
\newtheorem{theorem}{Theorem} [section]
\newtheorem{lemma}[theorem]{Lemma}
\newtheorem{proposition}[theorem]{Proposition}
\newtheorem{remark}[theorem]{Remark}
\newtheorem{definition}[theorem]{Definition}
\newtheorem{corollary}[theorem]{Corollary}
\begin{document}

\title[Uniqueness of solutions to the periodic 3D GP hierarchy]{On the uniqueness of solutions to the periodic 3D Gross-Pitaevskii hierarchy}

\author[P. Gressman]{Philip Gressman}
\address{
University of Pennsylvania, Department of Mathematics, David Rittenhouse Lab, 209 South 33rd Street, Philadelphia, PA 19104-6395, USA}
\email{gressman@math.upenn.edu}
\urladdr{http://www.math.upenn.edu/~gressman/}
\thanks{P. G. was partially supported by NSF Grant DMS-1101393 and an Alfred P. Sloan Research Fellowship.}

\author[V. Sohinger]{Vedran Sohinger}
\address{
University of Pennsylvania, Department of Mathematics, David Rittenhouse Lab, 209 South 33rd Street, Philadelphia, PA 19104-6395, USA}
\email{vedranso@math.upenn.edu}
\urladdr{http://www.math.upenn.edu/~vedranso/}
\thanks{V. S. was supported by a Simons Postdoctoral Fellowship.}

\author[G. Staffilani]{Gigliola Staffilani}
\address{
Massachusetts Institute of Technology, Department of Mathematics, Building 2, 77 Massachusetts Avenue, Cambridge, MA 01239-4301, USA}
\email{gigliola@math.mit.edu}
\urladdr{http://math.mit.edu/~gigliola/}
\thanks{G.  S. was partially supported by NSF Grant DMS-1068815.}

\begin{abstract}
In this paper, we present a uniqueness result for solutions to the Gross-Pitaevskii hierarchy on the three-dimensional torus, under the assumption of an a priori spacetime bound. We show that this a priori bound is satisfied for factorized solutions to the hierarchy which come from solutions of the nonlinear Schr\"{o}dinger equation. In this way, we obtain a periodic analogue of the uniqueness result on $\mathbb{R}^3$ previously proved by Klainerman and Machedon \cite{KM}, except that, in the periodic setting, we need to assume additional regularity. In particular, we need to work in the Sobolev class $H^{\alpha}$ for $\alpha>1$. By constructing a specific counterexample, we show that, on $\mathbb{T}^3$, the existing techniques from the work of Klainerman and Machedon approach do not apply in the endpoint case $\alpha=1$. This is in contrast to the known results in the non-periodic setting, where the these techniques are known to hold for all $\alpha \geq 1$.

In our analysis, we give a detailed study of the crucial spacetime estimate associated to the free evolution operator. In this step of the proof, our methods rely on lattice point counting techniques based on the concept of the determinant of a lattice. This method allows us to obtain improved bounds on the number of lattice points which lie in the intersection of a plane and a set of radius $R$, depending on the number-theoretic properties of the normal vector to the plane. We are hence able to obtain a sharp range of admissible Sobolev exponents for which the spacetime estimate holds.
\end{abstract}
\keywords{Gross-Pitaevskii hierarchy, Nonlinear Schr\"{o}dinger equation,
BBGKY hierarchy, Bose-Einstein condensation, determinant of a lattice, $U$ and $V$ spaces, factorized solutions, multilinear estimates}
\subjclass[2010]{11P21,35Q55,70E55}
\maketitle
\section{Introduction}
\label{Introduction}
\subsection{Setup of the problem}
Let us fix $\Lambda=\mathbb{R}^d$ or $\Lambda=\mathbb{T}^d$.
We are considering the \emph{Gross-Pitaevskii hierarchy}, which is a sequence of functions $\gamma^{(k)}: \mathbb{R} \times \Lambda^k \times \Lambda^k \rightarrow \mathbb{C}$
satisfying the symmetry properties below for all $t \in \mathbb{R}$. Let $\vec{x}_k:=(x_1,\dots,x_k)$ and $\vec{x}_k':=(x_1',\dots,x_k;)$ where $x_j,x_j'\in \Lambda$ for $ j=1,\dots,k.$ We suppose that:
\begin{equation}
\label{eq:conditionsongamma}
\begin{cases}
\gamma^{(k)}(t,\vec{x}_k;\vec{x}_k')= \overline{\gamma^{(k)}(t,\vec{x}_k';\vec{x}_k)}\\
\gamma^{(k)}(t,x_{\sigma(1)},\ldots,x_{\sigma(k)};x_{\sigma(1)}',\ldots,x_{\sigma(k)}')=\gamma^{(k)}(t,x_1,\ldots,x_k;x_1',\ldots,x_k')\,\,
\end{cases}
\end{equation}
$\forall \,\, \sigma \in S_k.$ Furthermore, we suppose that  the $\gamma^{(k)}$ solve the following infinite system of linear equations:
\begin{equation}
\label{eq:GPhierarchy}
\begin{cases}
i \partial_t \gamma^{(k)} + (\Delta_{\vec{x}_k}-\Delta_{\vec{x}_k'})\gamma^{(k)}= b_0 \cdot \sum_{j=1}^{k}B_{j,k+1}(\gamma^{(k+1)})\\
\gamma^{(k)}|_{t=0}=\gamma_0^{(k)}.
\end{cases}
\end{equation}
Here, $b_0 \in \mathbb{R}$ is a coupling constant and $\Delta_{\vec{x}_k}:=\sum_{j=1}^k \Delta_{x_j},\Delta_{\vec{x}_k'}:=\sum_{j=1}^k \Delta_{x_j'}$.

In order to define the \emph{collision operator} $B_{j,k+1}$, we let $Tr_{\ell}$ denote the trace in the $x_{\ell}$ variable.
For $k \in \mathbb{N}$, $j \in \{1,\ldots,k\}$, and for $\gamma_0^{(k+1)}:\Lambda^{k+1} \times \Lambda^{k+1} \rightarrow \mathbb{C}$ we define:
\begin{equation}
\label{collisionoperator}
B_{j,k+1}(\gamma_0^{(k+1)})(\vec{x}_k;\vec{x}_k'):=Tr_{k+1}\big[\delta(x_j-x_{k+1}),\gamma_0^{(k+1)}\big](\vec{x}_k;\vec{x}_k')
\end{equation}
$$=\int_{\Lambda} dx_{k+1}\Big(\delta(x_j-x_{k+1})-\delta(x_j'-x_{k+1})\Big) \gamma_0^{(k+1)}(\vec{x}_k,x_{k+1};\vec{x}_k',x_{k+1}).
$$
In the above definition, we note that we are identifying operators on $L^2(\Lambda^{k+1})$ with their kernels.
For $j,k$, and $\gamma_0^{(k+1)}$ as above, we also define the operators $B^{+}_{j,k+1}$ and $B^{-}_{j,k+1}$:
\begin{equation}
\label{Bjk+}
B^{+}_{j,k+1}(\gamma_0^{(k+1)})(\vec{x}_k;\vec{x}_k'):=\int_{\Lambda} dx_{k+1} \, \delta(x_j-x_{k+1}) \gamma_0^{(k+1)}(\vec{x}_k,x_{k+1};\vec{x}_k',x_{k+1}).
\end{equation}
\begin{equation}
\label{Bjk-}
B^{-}_{j,k+1}(\gamma_0^{(k+1)})(\vec{x}_k;\vec{x}_k'):=\int_{\Lambda} dx_{k+1} \, \delta(x_j'-x_{k+1}) \gamma_0^{(k+1)}(\vec{x}_k,x_{k+1};\vec{x}_k',x_{k+1}).
\end{equation}
By construction:
\begin{equation}
\label{Bjk}
B_{j,k+1}=B^{+}_{j,k+1}-B^{-}_{j,k+1}.
\end{equation}
A special case of $(\ref{eq:GPhierarchy})$ which we will study is the Gross-Pitaevskii hierarchy with zero initial data:
\begin{equation}
\label{eq:homogeneousGPhierarchy}
\begin{cases}
i \partial_t \gamma^{(k)} + (\Delta_{\vec{x}_k}-\Delta_{\vec{x}_k'})\gamma^{(k)}= b_0 \cdot \sum_{j=1}^{k}B_{j,k+1}(\gamma^{(k+1)})\\
\gamma^{(k)}|_{t=0}=0.
\end{cases}
\end{equation}
In this paper, we will restrict our attention to the \emph{defocusing} Gross-Pitaevskii hierarchy, in which the coupling constant $b_0$ is positive. For simplicity of notation, we will set $b_0=1$.
\begin{remark}
The functions $\gamma_0^{(k)}: \Lambda^k \times \Lambda^k \rightarrow \mathbb{C}$ are $k$-particle density matrices or density matrices of order $k$. We will also assume, without explicit mention, that all of the $k$-particle density matrices in this paper satisfy the symmetry conditions $(\ref{eq:conditionsongamma})$.
\end{remark}
\subsection{Statement of main result}
The study of the Gross-Pitaevskii hierarchy $(\ref{eq:GPhierarchy})$ makes both mathematical and physical sense for the spatial domain $\Lambda=\mathbb{R}^d$ and $\Lambda=\mathbb{T}^d$. We will elaborate more on the physical interpretation in the following subsection. Throughout this paper, we will fix the spatial domain to be $\Lambda:=\mathbb{T}^3$.
Let us fix $\alpha \in \mathbb{R}$.
For $\gamma_0^{(k)}: \Lambda^k \times \Lambda^k \rightarrow \mathbb{C}$, we define the fractional differential operator $S^{(k,\alpha)}.$
\begin{equation}
\label{eq:Ska}
S^{(k,\alpha)} \gamma_0^{(k)}:= \prod_{j=1}^k (1-\Delta_{x_j})^{\frac{\alpha}{2}} (1-\Delta_{x'_j})^{\frac{\alpha}{2}} \gamma_0^{(k)}.
\end{equation}
In other words, $S^{(k,\alpha)}$ acts on the density matrix $\gamma_0^{(k)}$ by taking $\alpha$ fractional derivatives on each of the components.
With the above notation in mind, we suppose that we are working in the class $\mathcal{A}$ of sequences $\Gamma(t)=(\gamma^{(k)}(t))$, where each $\gamma^{(k)}: \mathbb{R} \times \Lambda^k \times \Lambda^k \rightarrow \mathbb{C}$ satisfies $(\ref{eq:conditionsongamma})$, and such that there exist continuous, positive functions $\sigma, f: \mathbb{R} \rightarrow \mathbb{R}$ with the property that for all $k \in \mathbb{N}$, $j=1,\ldots,k$, $t \in \mathbb{R}$:
\begin{equation}
\label{eq:aprioribound}
\int_{t-\sigma(t)}^{t+\sigma(t)} \|S^{(k,\alpha)}B_{j,k+1}(\gamma^{(k+1)})(s)\|_{L^2(\Lambda^k \times \Lambda^k)} ds \leq f^{k+1}(t).
\end{equation}
Our main result is:
\begin{theorem}
\label{Theorem 1}
Suppose that $\Gamma(t)=(\gamma^{(k)}(t))$ is a solution to the homogeneous Gross-Pitaevskii hierarchy $(\ref{eq:homogeneousGPhierarchy})$ which belongs to the class $\mathcal{A}$ for $\alpha>1$. Then, $\Gamma(t)$ is identically zero.
In particular, solutions to the Gross-Pitaevskii hierarchy $(\ref{eq:GPhierarchy})$ are unique in the class $\mathcal{A}$ whenever $\alpha>1$.
\end{theorem}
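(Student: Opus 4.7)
The plan is to follow the Klainerman--Machedon uniqueness scheme \cite{KM}, adapted to the periodic setting. The argument rests on three ingredients: (i) iteration of Duhamel's formula for \eqref{eq:homogeneousGPhierarchy}; (ii) the combinatorial ``board-game'' of \cite{KM}, which collapses the $(k+n-1)!/(k-1)!$ summands produced after $n$ iterations into at most $C^{n}$ equivalence classes whose time integrations live on enlarged simplices; and (iii) a free-evolution spacetime estimate of the form
$$
\bigl\|S^{(k,\alpha)}B_{j,k+1}\bigl(U^{(k+1)}(t)\gamma_{0}^{(k+1)}\bigr)\bigr\|_{L^{2}_{t\in[-T,T]}L^{2}(\Lambda^{k}\times\Lambda^{k})} \leq C(T)\bigl\|S^{(k+1,\alpha)}\gamma_{0}^{(k+1)}\bigr\|_{L^{2}},
$$
where $U^{(k+1)}(t)=e^{it(\Delta_{\vec{x}_{k+1}}-\Delta_{\vec{x}_{k+1}'})}$ denotes the free evolution. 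I take this spacetime estimate as a black box; proving it on $\mathbb{T}^{3}$ in the sharp range $\alpha>1$ is the main technical work of the paper and is precisely where the lattice-point counting machinery announced in the abstract enters.

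Granting the estimate, fix $k\geq 1$ and $t_{0}\in\mathbb{R}$. Because the whole setup is time-translation invariant and the a priori bound \eqref{eq:aprioribound} holds on a neighborhood of every point, it suffices to show that $\gamma^{(k)}$ vanishes on a short interval $[t_{0}-\delta,t_{0}+\delta]$, where $\delta>0$ depends only on $\alpha$, on the constant in the spacetime estimate, and on $f(t_{0})$; a finite-step bootstrap then covers $\mathbb{R}$. Since $\gamma^{(k)}|_{t=0}=0$, iterating Duhamel $n$ times yields
$$
\gamma^{(k)}(t)=\sum_{j_{1},\dots,j_{n}}\int_{0\leq t_{n}\leq\cdots\leq t_{1}\leq t}U^{(k)}(t-t_{1})\,B_{j_{1},k+1}\cdots U^{(k+n-1)}(t_{n-1}-t_{n})\,B_{j_{n},k+n}\,\gamma^{(k+n)}(t_{n})\,dt_{1}\cdots dt_{n},
$$
with $j_{i}\in\{1,\dots,k+i-1\}$. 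The board game of \cite{KM} regroups these into $O(C^{n})$ representative iterated integrals against enlarged simplices, each carrying a single ordered tuple $(B_{\mu(1),k+1},\dots,B_{\mu(n),k+n})$.

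For each representative, I would peel off the $B$--$U$ pairs from the outside in. Since $U^{(k)}(t-t_{1})$ is unitary on $L^{2}$ and commutes with $S^{(k,\alpha)}$, applying the spacetime estimate to the outermost pair $B_{\mu(1),k+1}\circ U^{(k+1)}$ converts an $L^{\infty}_{t}L^{2}_{x}$ bound at level $k$ into an $L^{2}_{t_{1}}L^{2}_{x}$ bound of $S^{(k+1,\alpha)}$ applied to what remains, at the cost of a constant $C$; Cauchy--Schwarz in $t_{1}$ over $[t_{0}-\delta,t_{0}+\delta]$ contributes a factor $\delta^{1/2}$. Iterating through all $n$ levels and then performing one additional Duhamel expansion at the bottom (so that the terminal quantity has the form $B_{j,k+n+1}(\gamma^{(k+n+1)})$, which is precisely what \eqref{eq:aprioribound} controls after integration in $t_{n+1}$) gives a bound of the shape
$$
\bigl\|S^{(k,\alpha)}\gamma^{(k)}\bigr\|_{L^{\infty}_{[t_{0}-\delta,t_{0}+\delta]}L^{2}} \lesssim \bigl(C\delta^{1/2}\bigr)^{n}\,f(t_{0})^{k+n+1}.
$$
Choosing $\delta$ so small that $C\delta^{1/2}f(t_{0})<1$ and letting $n\to\infty$ forces the left-hand side to vanish, whence $\gamma^{(k)}\equiv 0$ on $[t_{0}-\delta,t_{0}+\delta]$, and bootstrapping completes the argument.

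The main obstacle throughout is the spacetime estimate on $\mathbb{T}^{3}$ at the sharp exponent $\alpha>1$; the Duhamel/board-game/peeling machinery is by now standard once that inequality is available. The gap between the requirement $\alpha>1$ here and the known $\alpha\geq 1$ on $\mathbb{R}^{3}$ reflects genuine number-theoretic obstructions to the periodic spacetime bound rather than any shortcoming of the uniqueness scheme, as the counterexample promised in the abstract confirms.
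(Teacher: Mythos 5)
Your proposal follows exactly the same route as the paper: iterated Duhamel expansion, the Klainerman--Machedon board-game to collapse the combinatorial blow-up to $O(C^n)$ classes, peeling off $B$--$\mathcal{U}$ pairs via the free-evolution spacetime estimate (taken as a black box), and controlling the innermost term by the a priori bound from the class $\mathcal{A}$, then choosing the time interval short enough that $(C\delta^{1/2}f)^n\to 0$. The one thing worth making explicit in your bootstrap step is that covering all of $\mathbb{R}$ relies on the stipulated continuity and positivity of $\sigma$ and $f$ in the definition of $\mathcal{A}$ (the paper packages this as Proposition~\ref{Proposition 2} plus a sup argument), so that the admissible $\delta$ stays bounded below on compact time intervals.
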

In order to deduce the second part of the statement of Theorem \ref{Theorem 1} from the first one, we use the fact that the Gross-Pitaevskii hierarchy is linear, and the observation that the class $\mathcal{A}$ is closed under taking differences. The latter claim follows from the triangle inequality, and the fact that $f_1^{k+1}(t)+f_2^{k+1}(t) \leq (f_1(t)+f_2(t))^{k+1}$.
The following result shows that the class $\mathcal{A}$ is non-empty, and that it contains the physically relevant \emph{factorized} solutions of the Gross-Pitaevskii hierarchy.
Suppose that $\phi$ solves the cubic nonlinear Schr\"{o}dinger equation on $\Lambda$:
\begin{equation}
\label{eq:NLS}
\begin{cases}
i \partial_t \phi + \Delta \phi = |\phi|^2\phi,\,\,\mbox{on}\,\,\mathbb{R}_t \times \Lambda\\
\phi|_{t=0}=\phi_0,\,\mbox{on}\,\Lambda.
\end{cases}
\end{equation}
Let $| \cdot \rangle \langle \cdot|$ denote the \emph{Dirac bracket}, which is defined as $|f \rangle \langle g| (x,x'):=f(x) \overline{g(x')}$.
We define:
\begin{equation}
\label{eq:factorizedsolution}
\gamma^{(k)}(t,\vec{x}_k;\vec{x}_k'):=\prod_{j=1}^k \phi(t,x_j) \overline{\phi(t,x_j')}=|\phi \rangle \langle \phi|^{\otimes k}(t,\vec{x}_k;\vec{x}_k').
\end{equation}
Then $(\gamma^{(k)}(t))$ solves the Gross-Pitaevskii hierarchy with initial data $(\gamma_0^{(k)})=(|\phi_0 \rangle \langle \phi_0|^{\otimes k})$. These are defined to be the \emph{factorized solutions}. It can be shown that if $(|\phi \rangle \langle \phi|^{\otimes k}(t))$ solves the Gross-Pitaevskii hierarchy with the initial data $(|\phi_0 \rangle \langle \phi_0|^{\otimes k})$, then $\phi$ necessarily solves the NLS equation. Hence, it makes sense to define $(\gamma^{(k)}(t))$ to be the \emph{factorized solution evolving from $(|\phi_0 \rangle \langle \phi_0|^{\otimes k})$}. We can prove the following result:
\begin{theorem}
\label{Theorem 2} Suppose that $\alpha \geq 1$ and that $\phi_0 \in H^{\alpha}(\Lambda)$. Then, the factorized solution $\Gamma(t)= (|\phi \rangle \langle \phi|^{\otimes k}(t))$ evolving from initial data $(|\phi_0 \rangle \langle \phi_0|^{\otimes k})$ belongs to the class $\mathcal{A}$.
\end{theorem}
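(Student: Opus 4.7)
The plan is to exploit the tensor-product structure of factorized solutions so as to reduce the a priori estimate (\ref{eq:aprioribound}) to a one-particle trilinear spacetime estimate for the cubic NLS on $\mathbb{T}^3$, and then to control that estimate using the local well-posedness theory at regularity $H^\alpha$.

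First I would compute $B_{j,k+1}$ explicitly on the factorized state. From (\ref{Bjk+})--(\ref{Bjk-}), evaluating the $\delta$-function at $x_{k+1}=x_j$ (resp.\ $x_{k+1}=x_j'$) in $\gamma^{(k+1)}=|\phi\rangle\langle\phi|^{\otimes(k+1)}(t)$ gives
\[
B^+_{j,k+1}(\gamma^{(k+1)})(\vec{x}_k;\vec{x}_k') = |\phi(x_j)|^2 \prod_{i=1}^k \phi(x_i)\overline{\phi(x_i')},
\]
and likewise $B^-_{j,k+1}(\gamma^{(k+1)})$ carries the extra factor $|\phi(x_j')|^2$ instead. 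Since $S^{(k,\alpha)}$ is itself a tensor product of one-particle operators and the integrand above is separable in each pair $(x_i,x_i')$, Plancherel on $L^2(\Lambda^k\times\Lambda^k)$ factorizes the norm to yield
\[
\|S^{(k,\alpha)}B_{j,k+1}(\gamma^{(k+1)})(s)\|_{L^2(\Lambda^k\times\Lambda^k)} \leq 2\,\bigl\||\phi(s)|^2\phi(s)\bigr\|_{H^\alpha(\Lambda)}\,\|\phi(s)\|_{H^\alpha(\Lambda)}^{2k-1}.
\]
All $k$-dependence in (\ref{eq:aprioribound}) has now collapsed to one-particle quantities.

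Next I would invoke cubic NLS theory on $\mathbb{T}^3$. By the local well-posedness result of Herr-Tataru-Tzvetkov at $\alpha=1$ (and persistence of $H^\alpha$-regularity for $\alpha>1$), together with mass and energy conservation for the defocusing equation, $\phi$ exists globally in $H^\alpha$ and $\|\phi(t)\|_{H^\alpha}$ is continuous in $t$. I would choose $\sigma(t)>0$ continuous and small enough that on $I=[t-\sigma(t),t+\sigma(t)]$ one has $\|\phi(s)\|_{H^\alpha}\leq 2\|\phi(t)\|_{H^\alpha}=:M(t)$. For the trilinear factor, the Herr-Tataru-Tzvetkov trilinear estimate in the $U^p/V^p$ framework (or, for $\alpha>1$, direct use of Bourgain's $L^p_{t,x}$ Strichartz bounds on $\mathbb{T}^3$ combined with fractional Leibniz) yields
\[
\int_I \bigl\||\phi(s)|^2\phi(s)\bigr\|_{H^\alpha}\,ds \leq C(M(t)),
\]
with $C$ a continuous positive function. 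Combining with the previous display, (\ref{eq:aprioribound}) holds with $f(t):=C'(1+M(t))^2$ for a sufficiently large continuous $C'$, so $\Gamma(t)\in\mathcal{A}$.

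The main obstacle is the trilinear bound on the short-time integral of $\||\phi|^2\phi\|_{H^\alpha}$ at the endpoint $\alpha=1$. For $\alpha>3/2$ it is immediate because $H^\alpha(\mathbb{T}^3)$ is a Banach algebra; in the range $\alpha\in[1,3/2]$, however, the estimate genuinely requires the refined Strichartz theory on $\mathbb{T}^3$ --- the same number-theoretic lattice-point counting that underlies the main uniqueness theorem above --- and its sharpness at $\alpha=1$ is precisely what makes the endpoint delicate.
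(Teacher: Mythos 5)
Your proposal is correct and follows essentially the same route as the paper: compute $B_{j,k+1}$ on the factorized state, use the tensor-product structure to factor the $L^2(\Lambda^k\times\Lambda^k)$ norm into $\||\phi|^2\phi\|_{H^\alpha}\cdot\|\phi\|_{H^\alpha}^{2k-1}$, control the trilinear term via the $U^2/V^2$ (i.e.\ $X^\alpha,Y^\alpha$) trilinear estimate of Herr--Tataru--Tzvetkov rather than raw Strichartz, and bound $\|\phi\|_{X^\alpha}$ on a suitably chosen short interval $I=[t-\sigma(t),t+\sigma(t)]$ by $\|\phi(t)\|_{H^\alpha}$ using the local theory (Proposition~\ref{Proposition 4}), finally taking $f(t)\sim\|\phi(t)\|_{H^\alpha}^2$. (Your exponent $2k-1$ is in fact correct; the paper's displayed $2k+1$ is a typo that doesn't affect the argument.) One peripheral remark in your last paragraph is off: the trilinear estimate $\|D^\alpha(|\phi|^2\phi)\|_{L^2_{t,x}}\lesssim\|\phi\|_{Y^\alpha}\|\phi\|_{Y^1}^2$ does \emph{not} rely on the determinant-of-lattice counting of Section~\ref{Spacetime Bound} and is \emph{not} sharp at $\alpha=1$ --- it holds for all $\alpha\geq1$ via Proposition~\ref{PropositionStar}, which is why Theorem~\ref{Theorem 2} is stated for $\alpha\geq1$. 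The determinant-of-lattice argument, and the failure at the endpoint $\alpha=1$, pertain to the collision-operator spacetime estimate of Proposition~\ref{Spacetime Bound}, which is an entirely different bound and the sole reason Theorem~\ref{Theorem 1} requires $\alpha>1$.
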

From Theorem \ref{Theorem 2}, it follows that, for $\alpha \geq 1$, the class $\mathcal{A}$ is nonempty, and that it contains the the solutions generated by the phenomenological mean-field equation, namely the NLS \eqref{eq:NLS}.
\begin{remark}
The key estimate of Theorem \ref{Theorem 1} fails at the endpoint $\alpha=1$. This is in sharp contrast to the work by Klainerman and Machedon on $\mathbb{R}^3$ \cite{KM}, in which the authors show that uniqueness holds for $\alpha \geq 1$. The case $\alpha=1$ is important since it corresponds to finite $k$-particle kinetic energy. In Subsection \ref{alpha1discussion}, we give a detailed explanation why none of the existing techniques in the Klainerman-Machedon argument apply to the problem of uniqueness in the endpoint case when the spatial domain is $\mathbb{T}^3$. In particular, in Proposition \ref{Alpha1unbounded}, we obtain that the key spacetime estimate for the free evolution does not hold when $\alpha=1$. Heuristically, the fact that the presented method holds for a smaller range of exponents in the case of $\mathbb{T}^3$ is a manifestation of the fact that dispersion is weaker on periodic domains. It is an interesting open problem to obtain uniqueness when $\alpha=1$ on $\mathbb{T}^3$. In fact if one obtains a $\mathbb{T}^3$ uniqueness result to the hierarchy \eqref{eq:homogeneousGPhierarchy} when $\alpha=1$, then together with \cite{EESY, ESY1}, one would have derived the $\mathbb{T}^3$ cubic NLS from quantum many-body dynamics.
\end{remark}

\begin{remark}
The a priori bound $(\ref{eq:aprioribound})$ corresponds to the bound on $\mathbb{R}^3$ used in the work of Klainerman and Machedon \cite{KM}. In \cite{KM}, the upper bound is proved for factorized solutions evolving from $(|\phi_0 \rangle \langle \phi_0|^{\otimes k})$, with $\phi_0 \in H^{\alpha}(\mathbb{R}^3)$, whenever $\alpha=1$. Moreover, in the case $\alpha=1$, the function $f$ can be chosen to be independent of $t$. The latter fact uses conservation of energy for the nonlinear Schr\"{o}dinger equation, which gives an a priori bound on the $H^1$ norm of the solution. There is no such known conservation law which would give a uniform bound on the $H^{\alpha}$ norm of the solution when $\alpha=1+$. According to Theorem \ref{Theorem 2}, in the periodic case, we can still consider factorized solutions when $\alpha=1$, but the uniqueness statement in Theorem \ref{Theorem 1} requires us to consider the regularity $\alpha>1$.
\end{remark}

\begin{remark}
\label{mathcalB}
A different non-empty class of local-in-time solutions to the Gross-Pitaevskii hierarchy $(\ref{eq:GPhierarchy})$ can be constructed by using the methods of T. Chen and Pavlovi\'{c} \cite{CP4}. The authors study the Gross-Pitaevskii hierarchy on $\mathbb{R}^d$, but their analysis
can be applied on $\mathbb{T}^d$ without any changes, assuming that one can prove a spacetime estimate on the free evolution as is given by Proposition \ref{Spacetime Bound}. In the authors' notation, given $\alpha>0, \xi \in (0,1)$, and $\Sigma=(\sigma^{(k)})$, a sequence of $k$-particle density matrices as above, one defines the norm:
$$\big\|\Sigma\big\|_{\mathcal{H}^{\alpha}_{\xi}}:=\sum_{k=1}^{+\infty} \xi^k \cdot \|\sigma^{(k)}\|_{H^{\alpha}_k(\Lambda^k \times \Lambda^k)}$$
where:
$$\|\sigma^{(k)}\|_{H^{\alpha}_k}:=\|S^{(k,\alpha)}\sigma^{(k)}\|_{L^2(\Lambda^k \times \Lambda^k)}.$$
Let us assume that $\Gamma_0 \in \mathcal{H}^{\alpha}_{\xi}$ for $\alpha>1$.
By using the methods of \cite{CP4}, it can be shown that there exists $\eta>0$ sufficiently small, such that for $\xi' \in (0,\eta \xi)$, there exists $T \sim (\xi')^2$ and a solution $\Gamma \in L^{\infty}_{[0,T]} \mathcal{H}^{\alpha}_{\xi'}$ of $(\ref{eq:GPhierarchy})$ on the time interval $[0,T]$. Furthermore, this solution satisfies the a priori estimate:
\begin{equation}
\label{classmathcalB}
\|S^{(k,\alpha)}B_{j,k+1} \gamma^{(k+1)}\|_{L^2({[0,T]} \times \Lambda^k \times \Lambda^k)} \leq C^k
\end{equation}
for some $C>0$, and for all $k \in \mathbb{N}$, and $j \in \{1,2,\ldots,k\}$, which is exactly the bound from \cite{KM}.
One can now use the proof of our Theorem \ref{Theorem 1} to deduce the uniqueness of solutions to the Gross-Pitaevskii hierarchy on $[0,T]$ in the class $\mathcal{B}$ given by the condition \eqref{classmathcalB}. This class is non-empty by the above arguments. The only condition one needs is that $\alpha>1$. In this way, one can use Proposition \ref{Spacetime Bound}, which is crucial both in the proof of the existence and uniqueness of local-in-time solutions in $\mathcal{B}$. We omit the details.
\end{remark}

\subsection{Motivation for the problem and previously known results}
\subsubsection{Link between the Gross-Pitaevskii hierarchy and the BBGKY hierarchy}\label{1.3.1}
The Gross-Pitaevskii hierarchy is related to the \emph{BBGKY hierarchy}\footnote{BBGKY stands for Bogolyubov-Born-Green-Kirkwood-Yvon.}:
\begin{equation}
\label{BBGKY}
i \partial_t \gamma^{(k)}_{N} + (\Delta_{\vec{x}_k}-\Delta_{\vec{x}_k'})\gamma^{(k)}_N=
\end{equation}
$$\frac{1}{N} \sum_{i<j}^{k} \big[N^{d\beta}V(N^{\beta}(x_i-x_j)),\gamma^{(k)}_{N}\big]+\frac{N-k}{N}\sum_{j=1}^k Tr_{k+1}\big[N^{d\beta}V(N^{\beta}(x_j-x_{k+1})),\gamma^{(k+1)}_N\big].$$
Here, $N \geq k$ is a positive integer, $\beta \geq 0$ is a scaling factor, $V:\Lambda \rightarrow \mathbb{R}$ is a potential, and $\gamma^{(k)}_N: \mathbb{R} \times \Lambda^k \times \Lambda^k \rightarrow \mathbb{C}$, satisfies the symmetry assumptions in $(\ref{eq:conditionsongamma})$. Moreover, the sequence $(\gamma^{(k)}_N)$ satisfies the consistency condition $\gamma^{(k)}_N=Tr_{k+1}\, \gamma^{(k+1)}_N$.

If $\beta>0$, and if we formally let $N \rightarrow \infty$, we would expect the limiting object $\gamma^{(k)}_{\infty}$ to solve the Gross-Pitaevskii hierarchy $(\ref{eq:GPhierarchy})$ with coupling constant depending on $V$, since $N^{d\beta}V(N^{\beta}(\cdot))$ is a rescaling of the delta function. In the \emph{mean field case} $\beta=0$, there is no such rescaling present, and we formally expect the limiting object $\gamma^{(k)}_{\infty}$ to solve the Hartree hierarchy:
\begin{equation}
\label{eq:Hartreehierarchy}
\begin{cases}
i \partial_t \gamma^{(k)} + (\Delta_{\vec{x}_k}-\Delta_{\vec{x}_k'})\gamma^{(k)}= \sum_{j=1}^{k}\tilde{B}_{j,k+1}(\gamma^{(k+1)})\\
\gamma^{(k)}|_{t=0}=\gamma_0^{(k)}.
\end{cases}
\end{equation}
where the collision operator is now defined by:
$$\tilde{B}_{j,k+1}(\gamma_0^{(k+1)})(\vec{x}_k;\vec{x}_k'):=\sum_{j=1}^{k} Tr_{k+1} \left[V(x_j-x_{k+1}), \gamma_0^{(k+1)}\right](\vec{x}_k;\vec{x}_k'),$$
whenever $\gamma_0^{(k+1)}$ $(k+1)$-particle density matrix.

The BBGKY hierarchy comes up naturally when one wants to describe the $N$-boson system on $L^2_s(\Lambda^N)$, the subspace of the Hilbert space $L^2(\Lambda^N)$ consisting of permutation symmetric functions, on which one defines the Hamiltonian:
$$H_N=\sum_{j=1}^N (-\Delta_{x_j})+\frac{1}{N} \sum_{i<j}^N N^{d\beta} V(N^{\beta}(x_i-x_j)),$$
for some fixed $\beta \geq 0$.
The $N$-body Schr\"{o}dinger equation corresponding to $H_N$ is:
$$i \partial_t \psi_{N,t} = H_N \psi_{N,t}.$$
Then, the associated $k$-particle density matrices $$\gamma^{(k)}_{N,t}:=\int_{\Lambda^{N-k}}d\vec{y}_{N-k}\psi_{N,t}(\vec{x}_k,\vec{y}_{N-k}) \overline{\psi_{N,t}}(\vec{x}_k',\vec{y}_{N-k}) = Tr_{k+1,\ldots,N} |\psi_{N,t} \rangle \langle \psi_{N,t}| (\vec{x}_k, \vec{x}_k')$$ are non-negative trace class operators which solve the BBGKY hierarchy.

This setup provides us with a method to rigorously derive NLS-type equations from the $N$-body Schr\"{o}dinger equation in the appropriate limit as $N \rightarrow \infty$. More precisely, if one starts with initial data $\psi_{N,0} \approx \phi_0^{\otimes N}$ for some $\phi_0$ in a way which will be made precise later, the goal is to show that:
\begin{equation}
\label{eq:convergence}
Tr\Big|\gamma^{(k)}_{N,t}-|\phi \rangle \langle \phi|^{\otimes k} \Big| \rightarrow 0,\,\,\mbox{as}\,\,N \rightarrow \infty
\end{equation}
where $\phi$ is the solution of $(\ref{eq:NLS})$ when $\beta>0$ and where $\phi$ solves the Hartree equation:
\begin{equation}
\label{eq:Hartree}
\begin{cases}
i \partial_t \phi + \Delta \phi = (V*|\phi|^2)\,\phi,\,\,\mbox{on}\,\,\mathbb{R}_t \times \Lambda\\
\phi|_{t=0}=\phi_0,\,\mbox{on}\,\Lambda.
\end{cases}
\end{equation}
when $\beta=0$.
Here $Tr |A|:=\sum_{\lambda \in Sp(A)}|\lambda|$ is the trace norm.

The strategy for this derivation consists of two parts:
\begin{enumerate}
\item $lim_{N \rightarrow \infty} \gamma^{(k)}_{N,t}$ converges to a solution of the appropriate infinite hierarchy with initial data $|\phi_0 \rangle \langle \phi_0|^{\otimes k}$.
\item Solutions of this infinite hierarchy are unique in a class which contains the limiting objects and the desired limit.
\end{enumerate}
\subsubsection{Physical interpretation and previously known results:}
Both the Gross-Pitaevskii hierarchy and the Hartree hierarchy arise in the study of Bose-Einstein condensation. This is a state of matter of dilute bosonic gases confined by an external potential at a temperature near absolute zero.
At such a low temperature, the gas particles tend to occupy a single one-particle state, even after an external trap is turned off. In our previous framework, this state corresponds to the solution of the nonlinear Schr\"{o}dinger equation in $(\ref{eq:NLS})$ when $\beta>0$, and to the solution of the Hartree equation $(\ref{eq:Hartree})$ when $\beta=0$. In this context, the cubic nonlinear Schr\"{o}dinger equation is also called the \emph{Gross-Pitaevskii equation} after the work of Gross \cite{Gross} and Pitaevskii \cite{Pitaevskii}. This phenomenon was predicted by Bose and Einstein in 1924-1925 \cite{Bose,Einstein}.
The theory was experimentally verified in the work of the groups of Cornell, Wieman \cite{CW}, and Ketterle \cite{Ket}, who were given the Nobel Prize in Physics in 2001 for their discovery.

The problem of rigorously deriving NLS-type equations from $N$-body Schr\"{o}dinger dynamics had been studied since the 1970s, the first results being those of Hepp \cite{Hepp}, and of Ginibre and Velo \cite{GV1,GV2}. The techniques used in these papers are different than the ones outlined above and are based on the
Fock space version of the
$N$-body Schr\"{o}dinger equation, as well as on the use of coherent states as initial data. The Fock space representation turns out to be necessary since coherent states do not need to have a fixed number of particles. The Fock space strategy has been applied in the work of Ammari and Nier \cite{AN1,AN2}, Benedikter, de Oliveira, Schlein \cite{BdOS}, X. Chen \cite{XC1,XC2}, Fr\"{o}hlich, Graffi, and Schwarz \cite{FGS}, Fr\"{o}hlich, Knowles, and Pizzo \cite{FKP}, Grillakis and Machedon \cite{GM}, and Grillakis, Machedon, and Margetis \cite{GMM1,GMM2}.

The first application of the strategy outlined in Subsection \ref{1.3.1} was due to Spohn \cite{S}. In this work, the author gives a rigorous derivation of the Hartree equation: $i\partial_t u + \Delta u = (V*|u|^2)\,u$ on $\mathbb{R}^d$ with $V \in L^{\infty}(\mathbb{R}^d)$, see also \cite{AN1, AN2, MichelangeliSchlein}.
In further work, Erd\H{o}s and Yau \cite{EY} extend Spohn's result to the case of the Coulomb potential $V(x)=\pm \frac{1}{|x|}$ on $\mathbb{R}^3$, by the use of operator inequalities. Some partial results in this direction were also proved by Bardos, Golse, and Mauser \cite{BGM}. An alternative proof of the result with Coulomb potential was given in the work of Fr\"{o}hlich, Knowles, and Schwarz \cite{FKS}. Furthermore, in \cite{ES}, Elgart and Schlein consider the case of a Coulomb potential, but with relativistic dispersion $\sum_{j=1}^{N}(1-\Delta_j)^{\frac{1}{2}}$ and they derive a relativistic nonlinear Hartree equation. All of the aforementioned results deal with the mean-field case $\beta=0$ and with factorized initial data $\psi_N=\phi^{\otimes N}$. Subsequent results on this problem were also obtained in the work of Ammari and Nier \cite{AN1,AN2}.

In the case $\beta>0$, due to the stronger interaction between the particles, one needs to replace the assumption of pure factorization with the less stringent assumption of \emph{complete Bose-Einstein condensation}. Complete Bose-Einstein condensation states that there exists $\phi \in L^2(\Lambda)$ such that:
$$Tr \big|\gamma^{(1)}_N - |\phi \rangle \langle \phi| \big| \rightarrow 0$$
as $N \rightarrow \infty.$
The assumption of complete Bose-Einstein condensation is in general difficult to verify.
This is not a purely a condition which simplifies the analysis, but it occurs in the physical study of the problem.
When $\Lambda=\mathbb{R}^3$ and when $\beta=1$,
Lieb, Seiringer, and Yngvason \cite{LSY,LSY2} add an external confining potential $V_{ext}$ in $H_N$ and study the ground state energy of $H_N$ divided by the number of particles $N$. Here, $V_{ext}$ is assumed to be measurable, locally bounded, and it is assumed to tend to infinity as $|x| \rightarrow \infty$, in the sense that $\inf_{|x|\geq R} V(x) \rightarrow \infty$ as $R \rightarrow \infty$. Under these assumptions, it is shown that the ground state energy converges to the minimum over $\|\phi\|_{L^2}=1$ of the Gross-Pitaevskii energy:
$\mathcal{E}_{GP}(\phi):=\int \big(|\nabla \phi(x)|^2 + V_{ext}(x)|\phi(x)|^2+4\pi a_0 |\phi(x)|^4 \big) \, dx$, where $a_0$ equals the scattering length of the interaction potential $V$. Here, $V$ is assumed to be positive, radial, and rapidly decreasing.
In later work, Lieb and Seiringer \cite{LS} use these results to rigorously verify the condition of Bose-Einstein condensation in the stationary problem. A systematic expository summary of these results can be found in the textbook by Lieb, Seiringer, Solovej and Yngvason \cite{LSSY}. Related results were also proven by Lee and Yin \cite{LeeYin}, Yin \cite{Yin}, and Yau and Yin \cite{YauYin}.

In a series of monumental works, Erd\H{o}s, Schlein, and Yau \cite{ESY2,ESY3,ESY4,ESY5} give a rigorous derivation of the defocusing cubic NLS on $\mathbb{R}^3$. For the initial data, they need to assume complete Bose-Einstein condensation, as well as finite energy per particle, i.e.
$$\langle \psi_N, H_N \psi_N \rangle \leq CN.$$
The interaction potential is again assumed to be sufficiently regular.
The coupling constant is given by $b_0=\int dx V$ when $0<\beta<1$.
Due to the short scale correlation structure, in the case of the \emph{Bose-Einstein scaling} $\beta=1$, the coupling constant is determined by the scattering length of $V$.
In the uniqueness step of the strategy, the authors use a Feynman graph expansion in order to control the terms which come from the Duhamel iteration.
The same problem was also studied in the one-dimensional case in the work of Adami, Bardos, Golse, and Teta \cite{ABGT}, as well as Adami, Golse, and Teta \cite{AGT}. We would also like to recall the connection with certain optical lattice models that have been studied in the work of Aizenman, Lieb, Seiringer, Solovej and Yngvason \cite{ALSSY1,ALSSY2}.
For an expository account of all of these results, we refer the reader to the lecture notes of Schlein \cite{Schlein}.

A different approach to the question of uniqueness was given in the work of Klainerman and Machedon \cite{KM}. The authors use an alternative combinatorial argument which shows uniqueness under the assumption of an a priori spacetime bound. In the proof, they also have to use Strichartz-type estimates reminiscent of their earlier work \cite{KM2}. In subsequent work, Kirkpatrick, Schlein, and the third author of this paper \cite{KSS} verify that the above spacetime bound holds for the limiting objects when $\Lambda=\mathbb{R}^2$ and when $\Lambda=\mathbb{T}^2$ using a trace theorem. 
For a different treatment of the derivation of the cubic NLS on $\mathbb{R}^2$ coming from 3D quantum dynamics, we also refer the reader to the work of X. Chen and Holmer \cite{ChenHolmer1}.
The work \cite{KSS} is the first complete treatment of a periodic problem. A previous analysis of the periodic problem in three dimensions, without the study of uniqueness, was done in the work of Elgart, Erd\H{o}s, Schlein and Yau \cite{EESY}, and in the work of Erd\H{o}s, Schlein, and Yau \cite{ESY1}. Elgart, Erd\H{o}s, Schlein, and Yau \cite{EESY1} previously also considered the mean-field case $\beta=0$, without the uniqueness analysis.

In the work of T. Chen and Pavlovi\'{c} \cite{CP}, the spacetime bound has also been verified in the case of $\Lambda=\mathbb{R}^3$ under a slightly different limiting procedure when the scaling parameter $\beta \in (0,1/4)$. More precisely, the authors assume that the initial data is slightly more regular than in $H^1$. By an appropriate truncation in the number of particles, they are able to show that the limiting object solves the Gross-Pitaevskii hierarchy and satisfies the wanted spacetime bound. The additional regularity assumption has been removed in further work of T. Chen and Taliaferro \cite{CT}. 
In subsequent work, X. Chen \cite{XC4} gives a derivation of the NLS with a quadratic potential when $\beta \in (0,\frac{2}{7}]$ and proves the Klainerman-Machedon spacetime bound under the limiting procedure from \cite{ESY2,ESY3,ESY4} with initial data of regularity $H^1$.
Moreover, in \cite{ChenHolmer2}, X. Chen and Holmer have verified the spacetime bound of Klainerman and Machedon for the NLS on $\mathbb{R}^3$
for $\beta\in (0,2/3)$.

The work \cite{CP} is based on techniques developed by T. Chen and Pavlovi\'{c} to study the Cauchy problem associated to the GP-hierarchy on $\mathbb{R}^d, d \geq 1$ \cite{CP1,CP2,CP3,CP1B,CP2B,CP3B,CP4} . In addition, in joint work with Tzirakis \cite{CPT1,CPT2}, they have shown blow-up results for the Cauchy problem, as well as the existence of multilinear Morawetz identities. A different approach to the study of the Cauchy problem of the Gross-Pitaevskii hierarchy in several special cases was given in the work of Z. Chen and C. Liu \cite{CL}. We further note that, in this context, a rigorous derivation of the nonlinear Schr\"{o}dinger equation with anisotropic switchable quadratic traps has been shown by X. Chen \cite{XC3,XC4}. Furthermore, in recent work Xie \cite{Xie} has given a derivation of a nonlinear Schr\"{o}dinger equation with general power-type nonlinearity on $\mathbb{R}$ and on $\mathbb{R}^2$. In \cite{ChenHolmer3}, X. Chen and Holmer established the rigorous derivation 
of the focusing cubic NLS on $\mathbb{R}$ from quantum many-body dynamics with attractive interactions. 


It is a natural question to ask what is the rate of convergence in $(\ref{eq:convergence})$. This question was first addressed by Rodnianski and Schlein \cite{RodnianskiSchlein}. The authors show that, in the case of the Coulomb interaction potential in $\mathbb{R}^{3+1}$, one has the bound:
$$Tr\big|\gamma^{(k)}_{N,t}-|\phi \rangle \langle \phi|^{\otimes k} \big| \leq C \frac{e^{Kt}}{N^{\frac{1}{2}}}.$$
This result was extended in the work of Knowles and Pickl \cite{KP} by using different methods. 
Subsequent results on this aspect of the problem have been proved in \cite{Anapolitanos,BdOS,CLS, CP,CT,XC1,XC2,ChenHolmer1,ChenHolmer2,ChenHolmer3,ES,FGS,FL,GM,GMM1,Lee,Luhrmann,MichelangeliSchlein,Pickl1,Pickl2,Xie}.

After the appearance of our preprint, an alternative approach for showing uniqueness in Gross-Pitaevskii hierarchies on $\mathbb{R}^3$ was given by T. Chen, Hainzl, Pavlovi\'{c}, and Seiringer \cite{CHPS}. This uniqueness result is unconditional and it holds for regularity $\alpha \geq 1$. Its proof is based on the quantum de Finetti theorem, which is a quantum analogue of the Hewitt-Savage theorem from probability theory \cite{HewittSavage}. In the version applied in \cite{CHPS}, this theorem was first stated in the work of Lewin, Nam, and Rougerie \cite{LewinNamRougerie}. The proof of the uniqueness result in \cite{CHPS} uses Strichartz estimates on $\mathbb{R}^3$, so this approach does not directly apply to the domain $\mathbb{T}^3$. The same group of authors used these ideas to prove a scattering result for the GP hierarchy \cite{ChHaPavSei2}.

In the analysis of the Gross-Pitaevskii hierarchy based on the technique of  Klainerman and Machedon, it is important to obtain a dispersive spacetime estimate such as  $(\ref{eq:outlinespacetime})$ below. This estimate was proved on $\mathbb{R}^3$ by Klainerman and Machedon \cite{KM} when $\alpha \geq 1$.
In \cite{KSS}, Kirkpatrick, Schlein, and the third author of this 
paper prove the estimate on $\mathbb{R}^2$ and $\mathbb{T}^2$ for a class of regularity exponents which are below the energy regularity. The authors do not address the issue of optimality of the regularity exponent, as the aim of this work was to obtain an estimate in the energy space. Similar estimates for the three-body collisions on $\mathbb{R}^d$ were proved in the work of T. Chen and Pavlovi\'{c} \cite{CP1}. In subsequent work \cite{XC3}, X. Chen proved the spacetime estimate on $\mathbb{R}^d$ when $d \geq 2$ for $\alpha \geq \frac{d-1}{2}$, which was noted to be optimal in the non-periodic setting by scaling arguments. In addition, some variable-coefficient versions of the estimate were obtained. This work builds on the previous results by X. Chen \cite{XC} and Grillakis and Margetis \cite{GrillakisMargetis}. In particular, when $d=3$, the optimal regularity estimate corresponds to the estimate used by Klainerman and Machedon \cite{KM}. In \cite{Beckner}, Beckner gives two higher dimensional generalizations of the spacetime bound. The estimates proved in \cite{Beckner} have a slightly different motivation than the study of the Gross-Pitaevskii hierarchy. In particular, they are viewed as a step in the larger scale dual program for understanding how smoothness controls restriction to a non-linear sub-variety \cite{Beckner2}. Subsequently, in \cite{ChenHolmer3}, X. Chen and Holmer prove the optimal range of regularity exponents on $\mathbb{R}$.
Let us remark that in the analysis of the estimate $(\ref{eq:outlinespacetime})$ in the non-periodic setting mentioned above, the arguments used rely on scaling properties of the delta function. Such arguments can not be applied in the periodic setting.

\subsection{Main ideas of the proof}
In order to prove Theorem \ref{Theorem 1}, we first write an iterated Duhamel expansion. We then apply the combinatorial \emph{boardgame} approach from \cite{KM}. The combinatorial reductions in the non-periodic setting apply in the periodic setting without any changes.

The main issue is to check two properties. The first is the spacetime estimate, given by Proposition
\ref{Spacetime Bound}, which allows us to contract the terms appearing in the Duhamel expansion. The second property one has to check is that, under appropriate assumptions, the factorized solutions to the Gross-Pitaevskii hierarchy, given by $(\ref{eq:factorizedsolution})$, lie in the admissible class $\mathcal{A}$, defined by the a priori bound $(\ref{eq:aprioribound})$. Hence, the class $\mathcal{A}$ is non-empty and it contains the physically relevant solutions which are related to the nonlinear Schr\"{o}dinger equation. The precise statement is given in Theorem \ref{Theorem 2}. As we will see, the fact that we are in the periodic setting will cause additional difficulties in checking both properties.

The spacetime bound we prove is:
\begin{equation}
\label{eq:outlinespacetime}
\|S^{(k,\alpha)}B_{j,k+1} \, \mathcal{U}^{(k+1)}(t)\gamma_0^{(k+1)}\|_{L^2([0,2\pi] \times \Lambda^k \times \Lambda^k)} \leq C_1 \|S^{(k+1,\alpha)} \gamma_0^{(k+1)}\|_{L^2(\Lambda^{k+1} \times \Lambda^{k+1})}.
\end{equation}
whenever $\alpha>1$.
Here,\, $\mathcal{U}^{(k)}(t)\gamma_0^{(k)}:=e^{it \sum_{j=1}^{k} \Delta_{x_j}} \gamma_0^{(k)} e^{-it \sum_{j=1}^{k} \Delta_{x_j'}}$ denotes the free evolution operator. 
This spacetime bound is proved in Proposition \ref{Spacetime Bound}.

In the proof of the estimate $(\ref{eq:outlinespacetime})$, we argue as in \cite{KM}, and we observe that the left-hand side of $(\ref{eq:outlinespacetime})$ is
given in terms of a convolution. The key is then to check uniform pointwise bounds on the multiplier:
\begin{equation}
\label{eq:1expressionI}
I=I(\tau,p):=\sum_{n,m \in \mathbb{Z}^3} \frac{\delta(\tau+|p-n-m|^2+|n|^2-|m|^2) \langle p \rangle^{2\alpha}}
{\langle p-n-m \rangle^{2\alpha} \langle n \rangle^{2\alpha} \langle m \rangle^{2\alpha}}.
\end{equation}

One way to obtain the pointwise boundedness of the multiplier given in $(\ref{eq:1expressionI})$ is based on lattice counting arguments previously used in work on nonlinear dispersive equations on periodic domains \cite{B,DPST,KSS}. In this way, one can show that the multiplier given in $(\ref{eq:1expressionI})$ is bounded when $\alpha>\frac{5}{4}$. The idea is that, in order to count the number of lattice points in a set $S \subseteq \mathbb{R}^3$, we fix $m_0 \in S$, and we then count the number of $k \in \mathbb{Z}^3$ such that $m_0+k \in S$. The point is that, if $S$ has a specific structure, we can obtain additional geometric insight about $k$ and so this counting problem is easier than the original one. For instance, if $k$ has to lie in the intersection of a plane and a ball of radius $R$, then there are at most a uniform constant times $R^2$ possible values of $k$. The fact that there is a delta function in $(\ref{eq:1expressionI})$ allows us to reduce the dimension by one and to look at a five-dimensional sum, instead of a six-dimensional sum. This idea was first applied in \cite{KSS}, in which the two-dimensional analogue of the above sum was shown to be bounded 
below the energy threshold $\alpha=1$. In the two-dimensional case, the authors had at their disposal sharper lattice point counting techniques, such as the Gauss Lemma \cite{BombieriPila,Huxley,IrelandRosen}, which was first used in the PDE context in work of Bourgain \cite{B}, and later in \cite{DPST}. In three-dimensions, however, this approach only gives the bound when $\alpha>\frac{5}{4}$. We will omit the details of this approach and we will refer the interested reader to \cite{KSS} for the discussion in the two-dimensional setting.

In Proposition \ref{Spacetime Bound}, we will use a different method to prove that the expression in $(\ref{eq:1expressionI})$ is bounded, which allows us to obtain the range $\alpha>1$. Our idea is to rewrite the sum $(\ref{eq:1expressionI})$ as:
\begin{equation}
\label{1thesum2}
I(\tau,p)=\sum_{m,n \in \mathbb{Z}^3} \frac{\delta(\tau + |p|^2 - 2 \langle n, m \rangle) \ang{p}^{2\alpha}}{\ang{m-p}^{2\alpha} \ang{n-p}^{2\alpha} \ang{p-n-m}^{2\alpha}}.
\end{equation}
In the non-periodic case \cite{KM}, it was an important part of the proof to use scaling properties of the delta function, in order to obtain a decay factor. In the periodic setting, we cannot apply such scaling arguments. In fact, we observe that there is never any decay along $I(|p|^2,p)$, if we take $(m,n) = (p,p)$. However, having rewritten the sum as $(\ref{eq:1expressionI})$, we can use the concept of the \emph{determinant of a lattice}, which allows us to sharpen the counting arguments and obtain the bound for $\alpha>1$. Here, we recall that the determinant of the lattice is defined to be the (positive) volume of the smallest parallelepiped spanned by the elements of the lattice.

The key point is that we can use more detailed number theoretic arguments to estimate the number of lattice points which lie in the intersection of a ball and a plane which is normal to a vector in $\mathbb{Z}^3$. The number theoretic properties of the normal vector will be important for our argument. More precisely, in Lemma \ref{MainCountingLemma}, we prove that for fixed $m \in \mathbb{Z}^3$, and $c \in \mathbb{Z}$, if ${\mathcal E} \subset \mathbb{Z}^3$ is a set of diameter $R \geq 1$ then
\begin{equation}
\label{eq:1counting1}
\# \left( \mathcal{E} \cap \{ x \in \mathbb{Z}^3, \langle m,x \rangle = c\} \right) \lesssim R + \frac{R^2}{|m_*|}.
\end{equation}
Here $m_*$ is defined to be $m$ divided by the greatest common divisor of its coordinates, with the convention that $m_* := 0$ when $m=0$. This quantity equals the determinant of a certain lattice. In particular, in the work of P. McMullen \cite{mcmullen1984}, with later generalizations by Schnell \cite{schnell1992}, it is shown that, for fixed $m \in \mathbb{Z}^3$, the determinant of the lattice $$\Lambda_m := \{x \in \mathbb{Z}^3, \langle m,x \rangle = 0 \}$$ equals $|m_*|$.
We note that the bound in $(\ref{eq:1counting1})$ gives an improvement over the immediate bound of $O(R^2)$ which holds for general planes. Its proof is based on the geometric interpretation of the determinant of a lattice.
The bound $(\ref{eq:1counting1})$ is useful since we are also able to give a bound on the sum of $\frac{1}{|m_*|}$. More precisely, in Lemma \ref{sum1mstar}, we prove that whenever $\mathcal{E} \subset \mathbb{Z}^3 \setminus \{0\}$ is a set of diameter $R \geq 1$, then:
\begin{equation}
\label{eq:1counting2}
\sum_{m \in {\mathcal E}} \frac{1}{|m_*|} \lesssim R^2.
\end{equation}
We can then estimate $I(\tau,p)$ by using $(\ref{eq:1counting1})$ in the inner sum and $(\ref{eq:1counting2})$ in the outer sum. We believe that this counting approach is of independent interest.

Let us recall that in \cite{KM}, when one is working on $\mathbb{R}^3$, the above mentioned scaling properties of the $\delta$ function lead one to study convolution estimates for generalizations of Riesz potentials:
\begin{equation}
\label{eq:Rieszpotential}
\int_{\Sigma} \frac{1}{|\xi-\eta|^a} \frac{1}{|\eta|^b} dS(\eta)
\end{equation}
where $\Sigma$ is a smooth submanifold of $\mathbb{R}^3$, and $\xi \in \mathbb{R}^3$. In this concrete case, $\Sigma$ is taken to be either a plane or a sphere. More precisely, in equation (16) of \cite{KM}, the authors use homogeneity properties of the delta function and the coarea formula to deduce that:
$$\delta(\tau+|\xi_1-\xi_2-\xi_2'|^2+|\xi_2|^2-|\xi'_2|^2) \, d\xi'_2 =\frac{dS(\xi'_2)}{2|\xi_1-\xi_2|}$$
where $dS$ is the surface measure of a plane in $\mathbb{R}^3$.

Similarly, in equation (18) of \cite{KM}, it is shown that:
$$\delta(\tau+|\xi_1-\xi_2-\xi'_2|^2+|\xi_2|^2-|\xi'_2|^2)\,d\xi_2 = \frac{dS(\xi_2)}{4|\xi_2-\frac{\xi_1-\xi'_2}{2}|}$$
where now $dS$ is the surface measure of a sphere in $\mathbb{R}^3$. The aim is to obtain a bound on the integral in $(\ref{eq:Rieszpotential})$ in terms of a negative power of $|\xi|$, for a suitable choice of $a$ and $b$.

In Subsection \ref{alpha1discussion}, we give a precise study of the endpoint case $\alpha=1$. In Proposition \ref{Alpha1log}, we prove that, in the case of frequency localized data, we obtain a logarithmic loss of derivative in the analogue of $(\ref{eq:outlinespacetime})$. Furthermore, in Proposition \ref{Alpha1unbounded}, we prove that the estimate $(\ref{eq:outlinespacetime})$ does not hold in the case $\alpha=1$. This shows that the number theoretic method based on the determinant of a lattice gives a sharp result for the range of $\alpha$.
The proof requires a precise decomposition of the operator $B_{j,k+1}$ and a specific choice of data for which one part in this decomposition is bounded and the other one grows logarithmically in the size of the largest frequency.

Such a result is in drastic contrast with the setting of $\mathbb{R}^3$ \cite{KM} and in the setting of $\mathbb{T}^2$ and $\mathbb{R}^2$ \cite{KSS}, where the analogue of $(\ref{eq:outlinespacetime})$ holds for $\alpha=1$.
Let us remark that in Proposition \ref{factorized}, we prove that the above spacetime estimate holds when $\alpha=1$ in the case of \emph{factorized objects}, i.e. for $\gamma^{(k)}_{0}:=|\phi_0 \rangle \langle \phi_0|^{\otimes k}$ for $\phi_0 \in H^1$. However, it is important to note that the factorization property is not preserved under the action of the collision operators $B_{j,k+1}$ in the Duhamel iteration for general factorized density matrices. As a result, the result of Proposition \ref{factorized} cannot be used to prove a uniqueness result at the level of regularity $\alpha=1$.

Having these facts in mind, we note that the factorized objects are very special in the class $\mathcal{H}^1$ of density matrices containing one derivative in each variable in $L^2$. Namely, they satisfy $(\ref{eq:outlinespacetime})$ for $\alpha=1$, whereas general objects do not
necessarily have to satisfy this estimate. It would be interesting to see if there is any other non-trivial class of elements in $\mathcal{H}^1$ which also satisfy $(\ref{eq:outlinespacetime})$ when $\alpha=1$. In the forthcoming work \cite{SoSt}, an affirmative answer is given to this question in the probabilistic sense. Here, the range of the regularity parameter is $\alpha>\frac{3}{4}$.
We want to prove that the factorized solutions to the Gross-Pitaevskii hierarchy lie in the class $\mathcal{A}$ if the initial data $\phi_0$ for the NLS $(\ref{eq:NLS})$ is sufficiently regular. This fact tells us that the class $\mathcal{A}$ is non-empty and physically relevant\footnote{For another non-empty class $\mathcal{B}$ of \emph{local-in-time} solutions, see Remark \ref{mathcalB}.}. The analogous fact in the non-periodic setting relies, given in Remark 1.2 of \cite{KM}, relies on the use of Strichartz estimates. If one wanted to take this approach in the periodic setting, one would need to use the Strichartz estimate on $\mathbb{T}^3$ due to Bourgain \cite{B}:
\begin{theorem}
\label{StrichartzT3}
(Strichartz estimate on $\mathbb{T}^3$; Bourgain \cite{B})
Let $p>4,N>1$. Let $P_{\leq N}$ denote the projection onto frequencies $|n| \leq N$. Then:
$$\|P_{\leq N}e^{it\Delta}\phi\|_{L^p([0,1] \times \mathbb{T}^3)} \lesssim N^{\frac{3}{2}-\frac{5}{p}}\|P_{\leq N}\phi\|_{L^2(\mathbb{T}^3)}.$$
\end{theorem}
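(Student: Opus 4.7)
\emph{Proof strategy.} The plan is to follow Bourgain's approach via the Hardy-Littlewood circle method, bootstrapped from a kernel estimate through a $TT^*$ and real interpolation argument. By duality, the claimed inequality is equivalent to a $TT^*$ bound: the operator given by convolution in $(t,x)$ with the Weyl kernel
\begin{equation*}
K_N(t,x):=\sum_{|n|\leq N} e^{i(n\cdot x - |n|^2 t)}
\end{equation*}
should map $L^{p'}([0,1]\times\mathbb{T}^3)$ to $L^p([0,1]\times\mathbb{T}^3)$ with operator norm $\lesssim N^{3-10/p}$. The plan is to deduce this operator bound from a sharp distribution function estimate on $|K_N(t,x)|$ interpolated against the trivial bound $\|K_N\|_{L^\infty}\lesssim N^3$.

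To control $K_N$ one would decompose $t\in[0,1]$ into a Farey dissection: major arcs $\mathfrak{M}_{a,q}$ centered at rationals $a/q$ with $(a,q)=1$ and $q\leq Q_0 := N^{2\delta}$, together with complementary minor arcs $\mathfrak{m}$. On a major arc, write $t = a/q + s$ with $|s|$ small, partition $n\in\mathbb{Z}^3$ into residue classes modulo $q$, and apply Poisson summation within each class. This factors $K_N(t,x)$ as a complete three-dimensional quadratic Gauss sum (of modulus $\lesssim q^{3/2}$) times the continuous free Schr\"odinger kernel on $\mathbb{R}^3$ evaluated at time $s$, yielding the pointwise bound $|K_N(t,x)|\lesssim q^{-3/2}|s|^{-3/2}$. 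On minor arcs, classical Weyl bounds for the cubic phase exponential sum $\sum_{|n|\leq N}e^{-i|n|^2 t}$ in three variables give a power saving $|K_N(t,x)|\lesssim N^{3-\delta_0}$ for some $\delta_0 = \delta_0(Q_0) > 0$. Combining the two regimes produces the distribution function bound needed to close the interpolation.

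The restriction $p>4$ is sharp for this scheme: the major arc singularity $|s|^{-3/2}$ that drives the kernel bound lies in $L^{p/2}(ds)$ near the origin only when $p>4$, which is exactly the integrability threshold needed to recover the exponent $\tfrac{3}{2}-\tfrac{5}{p}$. The principal obstacle in executing this plan is the minor arc analysis: one must establish Weyl-type savings on the three-variable exponential sum with quadratic phase that are strong enough, and calibrated carefully against the Farey cutoff $Q_0$, so that the major and minor arc contributions balance to recover the sharp power $N^{3/2-5/p}$ for every $p>4$. Since this is the standard program carried out by Bourgain in \cite{B}, we would refer to that work for the detailed execution rather than reproduce it here.
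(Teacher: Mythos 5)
The paper does not prove this theorem; it states it as a cited result of Bourgain \cite{B} and, in the very next sentence, explains that the loss of $\frac{3}{2}-\frac{5}{p}$ derivatives makes it too weak for the argument of \cite{KM}, which is why Section \ref{Factorized} and Appendix A switch to $U$/$V$-space estimates (Proposition \ref{PropositionStar}, Corollary \ref{productestimate}) instead. So there is no internal proof to measure your sketch against, and the real issue to understand is why the paper \emph{avoids} this theorem.

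Taken on its own terms as a summary of Bourgain's circle-method proof, the skeleton of your sketch is reasonable: the $TT^*$ reduction to the Weyl kernel $K_N$, the Farey dissection, Poisson summation on major arcs producing a Gauss sum times the continuous Schr\"odinger kernel, Weyl differencing on minor arcs, and a level-set/interpolation argument to assemble the $L^{p/2}$ bound on $K_N$. The target operator norm $N^{3-10/p}$ is also correct. The concrete error is in your explanation of the threshold $p>4$. You assert that $|s|^{-3/2}$ lies in $L^{p/2}(ds)$ near the origin only when $p>4$; in fact $\int_0^1 |s|^{-3p/4}\,ds$ converges precisely when $p<\tfrac{4}{3}$, the opposite inequality. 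The major-arc bound is not a bare singularity but rather of the form $N^3\,q^{-3/2}\bigl(1+N^2|s|\bigr)^{-3/2}$, which saturates at $N^3$ for $|s|\lesssim N^{-2}$; the exponent $p>4$ emerges only after one sums this over $q\leq Q_0$, weights by the number of admissible numerators $a$, integrates in $s$, and balances the resulting Dirichlet-type series against the minor-arc contribution and the $x$-integration on $\mathbb{T}^3$. As written, the sentence purporting to explain $p>4$ would not close, and it is precisely the arc-counting bookkeeping that you defer to \cite{B} that actually produces the threshold.
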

However, the loss of $\frac{3}{2}-\frac{5}{p}$ derivatives turns out to be too much to allow for the argument from \cite{KM} to carry through, so we have to use a refinement of Bourgain's result.
We can circumvent the mentioned difficulty by using multilinear estimates in the atomic spaces $U$ and $V$. We note that the $U$ spaces were first defined in the work of Koch and Tataru \cite{KT}. See also \cite{KT2,KT3}. A variant of the $V$ spaces is originally found in the work of Wiener \cite{Wiener}. The first application of both spaces in the context of nonlinear dispersive equations appears in \cite{KT}. Variants of these spaces have been used in a class of critical problems in the work of Hadac, Herr, and Koch \cite{HHK}, Herr, Tataru, and Tzvetkov \cite{HTT}, Ionescu and Pausader \cite{IP}, and Dodson \cite{Dodson1,Dodson2}, as well as in the context of low regularity solutions for the Korteweg-de Vries equation by B. Liu \cite{Liu}. More recently, these techniques have been applied in the study of low regularity almost-sure local well-posedness for the NLS in the work of Nahmod and the third author of this paper \cite{NS}, as well as in the study of low regularity solutions for the NLS equation on the irrational torus by Guo, Oh, and Wang \cite{GuoOhWang}. For a detailed account of $U$ and $V$ spaces, we refer the reader to Section 2 of \cite{HHK}.

In particular, we use variants of these spaces adapted to the Schr\"{o}dinger equation which were used in the context of the energy critical NLS on $\mathbb{T}^3$ \cite{HTT}, and later in \cite{IP}. The key fact that one has to prove is the estimate, given in Corollary \ref{productestimate} (see also $(\ref{eq:trilinearbound})$) :
\begin{equation}
\label{eq:3linearestimate}
\|D^{\alpha}_x(|\phi|^2\phi)\|_{L^2(I \times \Lambda)} \lesssim \|\phi\|_{Y^{\alpha}(I)} \|\phi\|_{Y^1(I)}^2 \lesssim \|\phi\|_{X^{\alpha}(I)}^3.
\end{equation}
Here, $I$ is a time interval and the function spaces are defined below. In these spaces, the superscript denotes the order of differentiation. The bound $(\ref{eq:3linearestimate})$ holds when $\alpha \geq 1$, and the implied constant depends on the length of $I$. The point is that, in the above estimate, there is no longer a derivative loss which would be present if one were to directly apply the Strichartz estimates.

We note that the factorized solutions lie in the class $\mathcal{A}$ whenever $\alpha \geq 1$. Hence, the restriction to $\alpha>1$ in Theorem \ref{Theorem 1} comes from the fact that we need to have
$\alpha>1$ in order to prove the spacetime bound $(\ref{eq:outlinespacetime})$. In particular, if $\alpha=1$, the bound $(\ref{eq:outlinespacetime})$ does not  hold. For details, we refer the reader to Subsection \ref{alpha1discussion}.
\subsection{Organization of the paper}
In Section \ref{NotationHarmonicAnalysis} we recall some notation and some known facts from harmonic analysis. In particular, we give the definition of the atomic spaces $U$ and $V$. In Section \ref{Spacetime Bound}, we prove the spacetime estimate which was mentioned in $(\ref{eq:outlinespacetime})$ when $\alpha>1$. This is the content of Proposition \ref{Spacetime Bound}. In Subsection \ref{NumberTheory}, we introduce some notions from number theory, which allow us to prove strong lattice point counting results in Lemma \ref{sum1mstar} and in Lemma \ref{MainCountingLemma}. In Subsection \ref{ConclusionProofProposition}, we put these number-theoretic results together to give a proof of Proposition \ref{Spacetime Bound}, thus obtaining the key spacetime estimate for the free evolution operator when $\alpha>1$. In Subsection \ref{alpha1discussion} we discuss what happens when $\alpha=1$. In particular, we prove Proposition \ref{Alpha1log}, which states that we obtain a logarithmic loss of derivative and Proposition \ref{Alpha1unbounded}, which states that the spacetime estimate in $(\ref{eq:outlinespacetime})$ does not hold when $\alpha=1$. The main result of the paper, Theorem \ref{Theorem 1}, is proved in Section \ref{MainResult}. We note that the Duhamel expansion is explicitly given in $(\ref{eq:Duhamel})$ of this section. Theorem \ref{Theorem 2} concerning the factorized solutions is proved in Section \ref{Factorized}. We note that in this section, we need to use the theory of $U$ and $V$ spaces. In Appendix A, we present the proof of the local-in-time result given by Proposition \ref{Proposition 4}, which is needed in Section \ref{factorized} in order to prove Theorem \ref{Theorem 2}.

\vspace{5mm}

\textbf{Acknowledgements:}
The authors would like to thank Herbert Koch for a helpful explanation of $U$ and $V$ spaces and for suggesting the reference \cite{HHK}. They are also grateful to Boris Ettinger, Sebastian Herr and Baoping Liu for useful discussions about these spaces. Moreover, the authors would like to thank Benjamin Schlein, Antti Knowles, Thomas Chen and Nata\v{s}a Pavlovi\'{c} for helpful comments and discussions about the Gross-Pitaevskii hierarchy. They are grateful to Xuwen Chen for comments on an earlier version of the manuscript. Finally, they would like to thank the referee for a detailed reading and for a lot of helpful comments.
P. G. was partially supported by NSF Grant DMS-1101393 and an Alfred P. Sloan Research Fellowship. V.S. was supported by a Simons Postdoctoral Fellowship. G.S. was partially supported by NSF Grant DMS-1068815.
\section{Some notation and tools from harmonic analysis}
\label{NotationHarmonicAnalysis}
In our paper, let us denote by $A \lesssim B$ an estimate of the form $A \leq CB$, for some constant $C>0$. If $C$ depends on a parameter $p$, we write $A \lesssim_p B$, which we also write as $C=C(p)$.
Throughout the paper, we fix $\Lambda=\mathbb{T}^3$.
\subsection{The Fourier transform and differentiation}
We define the Fourier transform on $\Lambda=\mathbb{T}^3$ to be:
$$\widehat{f}(n):=\int_{\Lambda} f(x) e^{- i \langle x, n \rangle} dx.$$
Here, $n \in \mathbb{Z}^3$ and $\langle \cdot, \cdot \rangle$ denotes the inner product.
On $[0,2\pi] \times \Lambda$, we define the spacetime Fourier transform by:
$$\widetilde{u}(\tau,n):=\int_{[0,2\pi]} \int_{\mathbb{T}^3} u(t,x) e^{- i t \tau-i \langle x, n \rangle} dx dt.$$
Here, $\tau \in \mathbb{Z}$.

When we are considering functions $\gamma_0^{(k)}: \Lambda^k \times \Lambda^k \rightarrow \mathbb{C}$, for fixed $k \in \mathbb{N}$, we define the Fourier transform to be:
$$(\gamma_0^{(k)})\,\,\widehat{}\,\,(\vec{n}_k;\vec{n}'_k): = \int_{\Lambda^k \times \Lambda^k} \gamma_0^{(k)}(\vec{x}_k;\vec{x}'_k)
e^{-i \cdot \sum_{j=1}^{k} \langle x_j, n_j \rangle + i \cdot \sum_{j=1}^{k} \langle x'_j, n'_j \rangle} d\vec{x}_k \, d\vec{x}'_k.$$
Here $\vec{n}_k=(n_1,\ldots,n_k),\vec{n}'_k=(n'_1,\ldots,n'_k) \in (\mathbb{Z}^3)^k$.
The reason why we take the complex conjugate in the last $k$ variables is in order to be consistent with the definition of the factorized solutions to the Gross-Pitaevskii hierarchy given by $(\ref{eq:factorizedsolution})$.

We also define the spacetime Fourier transform of functions $\gamma^{(k)}: [0,2\pi] \times \Lambda^k \times \Lambda^k \rightarrow \mathbb{C}$ as:
$$(\gamma^{(k)})\,\,\widetilde{}\,\,(\tau,\vec{n}_k;\vec{n}'_k): = \int_{[0,2\pi] \times \Lambda^k \times \Lambda^k} \gamma^{(k)}(t,\vec{x}_k;\vec{x}'_k)
e^{-it \tau-i \cdot \sum_{j=1}^{k} \langle x_j, n_j \rangle + i \cdot \sum_{j=1}^{k} \langle x'_j, n'_j \rangle} d\vec{x}_k d\vec{x}'_k dt.$$
The fractional differentiation operator $S^{(k,\alpha)}$ was defined on $k$-particle density matrices in $(\ref{eq:Ska})$. For a sequence $\Gamma_0=(\gamma_0^{(k)})$ of $k$-particle density matrices as above, we say that
\begin{equation}
\label{mathcalHalpha}
\Gamma_0 \in \mathcal{H}^{\alpha}
\end{equation}
if for all $k$, the quantity $\|S^{(k,\alpha)} \gamma_0^{(k)}\|_{L^2(\Lambda^k \times
\Lambda^k)}$ is finite.
For fractional differentiation in a single variable $x$, we also sometimes write:
$$D^{\alpha}_x:=(1-\Delta_x)^{\frac{\alpha}{2}}.$$
Finally, we define the \emph{free evolution} operator $\mathcal{U}^{(k)}(t)$ by:
\begin{equation}
\label{eq:FreeEvolution}
\mathcal{U}^{(k)}(t)\gamma_0^{(k)}:=e^{it \sum_{j=1}^{k} \Delta_{x_j}} \gamma_0^{(k)} e^{-it \sum_{j=1}^{k} \Delta_{x_j'}}.
\end{equation}
We note that then:
$$\Big(i \partial_t + (\Delta_{\vec{x}_k}-\Delta_{\vec{x}_k'})\Big)\,\mathcal{U}^{(k)}(t)\gamma_0^{(k)}=0.$$
\subsection{Function spaces}
We take the following convention for the Japanese bracket $\langle \cdot \rangle$ :
$$\langle x \rangle: =\sqrt{1+|x|^2}.$$
Let us recall that we are working in Sobolev Spaces $H^s(\Lambda)$ on the the three-dimensional torus whose norms are defined for $s \in \mathbb{R}$ by:
$$\|f\|_{H^s(\Lambda)}:=\big(\sum_{n \in \mathbb{Z}^3}|\widehat{f}(n)|^2 \langle n \rangle^{2s}\big)^{\frac{1}{2}}. $$
Given a time interval $I$, we denote by $C_u(I;H^{\alpha}(\Lambda))$ the space of all functions $u=u(t,x)$, such that the function $t \mapsto H^{\alpha}(\Lambda)$ is uniformly continuous on $I$. With the norm $\|\cdot\|_{L^{\infty}(I;H^{\alpha}(\Lambda))}$, $C_u(I;H^{\alpha}(\Lambda))$ becomes a Banach space.

We now define the atomic function spaces $U$ and $V$ following \cite{KT,KT2}.
We fix $H$ to be a separable Hilbert space over $\mathbb{C}$. Let $\mathcal{Z}$ denote the set of all finite partitions of the real line of the form $-\infty<t_0<t_1<\cdots<t_K \leq +\infty$. If $t_K$ equals $+\infty$, we use the convention that all of the functions $v:\mathbb{R} \rightarrow H$ take value zero at $t_K$. Given a set $I \subseteq \mathbb{R}$, we let $\chi_I$ denote its characteristic function.
We can now define the $U$-space:
\begin{definition} ($U$-space)
\label{Uspace}
Given $1 \leq p <\infty$, and $H$ as above, we call $a:\mathbb{R} \rightarrow H$ an $U^p$-atom if $a=\sum_{k=1}^{M} \chi_{[t_{k-1},t_k)}\phi_{k-1}$ for some $\phi_k \in H$ satisfying $\sum_{k=0}^{M-1} \|\phi_{k}\|_{H}^p=1$.
Then, $U^p=U^p(\mathbb{R},H)\subseteq L^{\infty}(\mathbb{R},H)
$ is defined to be the space given by the norm:
$$\|u\|_{U^p}:=\inf \Big\{\sum_{j=1}^{+\infty} |\lambda_j|, u=\sum_{j=1}^{+\infty} \lambda_j a_j, a_j-U^p\, \mbox{atom}\Big\}.$$
\end{definition}
Let us observe that the procedure to define the $U$-space above is analogous to atomic decomposition in Harmonic analysis as was studied by Fefferman and Stein in \cite{FeffermanStein}.

We now define the $V$ space:
\begin{definition}($V$-space)
\label{Vspace}
Given $1 \leq p < \infty$, we define $V^p(\mathbb{R};H)$ to be the space of all functions $v:\mathbb{R} \rightarrow H$ such that:
$$\|v\|_{V^p}:=\sup_{(t_k)_{k=0}^{K} \in \mathcal{Z}} \Big(\sum_{k=1}^{K}
\|v(t_k)-v(t_{k-1})\|_{H}^p\Big)^{\frac{1}{p}} < \infty.$$
In the definition, one also takes the convention that $v(+\infty):=0$.
\end{definition}
It is shown \cite{KT,KT2} that the following inclusions hold:
\begin{equation}
\label{eq:basicinclusion}
U^p(\mathbb{R};H) \hookrightarrow
V^p(\mathbb{R};H) \hookrightarrow
L^{\infty}(\mathbb{R};H),\,\,\mbox{for all}\,\, 1 \leq p<\infty.
\end{equation}
\begin{equation}
\label{eq:basicinclusion2}
U^p(\mathbb{R};H) \subseteq U^q(\mathbb{R};H) \subseteq L^{\infty}(\mathbb{R};H),\,\,\mbox{for all}\,\, 1 \leq p<q <\infty.
\end{equation}
A key link between the $U$ and $V$ spaces is the following duality statement:
\begin{equation}
\label{eq:UVduality}
(U^p)^{*}=V^{p'},\,\,\mbox{whenever}\,\,1<p<\infty.
\end{equation}
More precisely, there exists a bilinear pairing
$B: U^p \times V^{p'} \rightarrow \mathbb{C}$ such that the map $T: V^{p'} \rightarrow (U^p)^{*}$ given by $T(v):=B(\cdot,v)$ is an isometric isomorphism. This statement is reminiscent of the duality between $H^1$ and $BMO$ from \cite{FeffermanStein}. For a precise form of the bilinear form $B$, we refer the reader to Section 2 of \cite{HHK}.

As in \cite{KT2,KT3}, it is convenient to define:
\begin{equation}
\label{eq:U2Delta}
\|u\|_{U^2_{\Delta}H^{\alpha}}:=\|e^{-it\Delta}u\|_{U^2(\mathbb{R};H^{\alpha}(\Lambda))}.
\end{equation}
\begin{equation}
\label{eq:V2Delta}
\|u\|_{V^2_{\Delta}H^{\alpha}}:=\|e^{-it\Delta}u\|_{V^2(\mathbb{R};H^{\alpha}(\Lambda))}.
\end{equation}

In \cite{HTT,IP}, as well as in \cite{GuoOhWang,NS}, one also considers the following spaces:
\begin{equation}
\label{eq:spaceX}
\|u\|_{X^{\alpha}}:= \big( \sum_{\xi \in \mathbb{Z}^3} \langle \xi \rangle^{2\alpha} \|e^{it|\xi|^2} \widehat{u(t)}(\xi)\|_{U^2(\mathbb{R}_t;\mathbb{C})}^2 \big)^{\frac{1}{2}}.
\end{equation}
\begin{equation}
\label{eq:spaceY}
\|u\|_{Y^{\alpha}}:=\big( \sum_{\xi \in \mathbb{Z}^3} \langle \xi \rangle^{2\alpha} \|e^{it|\xi|^2}
\widehat{u(t)}(\xi)\|_{V^2(\mathbb{R}_t;\mathbb{C})}^2 \big)^{\frac{1}{2}}.
\end{equation}
We note that the above spaces are reminiscent of the $X^{s,b}$ spaces which are commonly used in the theory of dispersive equations \cite{B3,Tao}.

In \cite{HTT}, it is noted that the following bound holds:
\begin{equation}
\label{eq:linearestimate}
\|e^{it\Delta}f\|_{X^{\alpha}} \leq \|f\|_{H^{\alpha}}
\end{equation}
as well as the following inclusion of spaces:
\begin{equation}
\label{eq:spaceinclusion}
U^2_{\Delta}H^{\alpha} \hookrightarrow X^{\alpha} \hookrightarrow Y^{\alpha} \hookrightarrow V^2_{\Delta}H^{\alpha}.
\end{equation}
Let us also note that:
\begin{equation}
\label{eq:LinftyY}
X^{\alpha} \hookrightarrow Y^{\alpha} \hookrightarrow L^{\infty}(\mathbb{R}; H^{\alpha}(\Lambda)).
\end{equation}
To deduce the last inclusion, we note that, by unitarity of $e^{-it\Delta}$ on $H^{\alpha}(\Lambda)$, one has:
$$\|u\|_{L^{\infty}(\mathbb{R};H^{\alpha}_x(\Lambda))} = \|e^{-it\Delta}u\|_{L^{\infty}(\mathbb{R};H^{\alpha}_x(\Lambda))}$$
which by $(\ref{eq:basicinclusion})$, the definition of $V^2_{\Delta}H^{\alpha}$ and $(\ref{eq:spaceinclusion})$ is:
$$\lesssim \|e^{-it\Delta}u\|_{V^2(\mathbb{R};H^{\alpha})}=\|u\|_{V^2_{\Delta}H^{\alpha}} \lesssim \|u\|_{Y^{\alpha}}.$$

Similarly to $(\ref{eq:UVduality})$, the following duality relation holds \cite{HTT}:
\begin{equation}
\label{eq:duality}
(X^{\alpha}(I))^{*}=Y^{-\alpha}(I).
\end{equation}
Here, $I$ is a time interval and $X^{\alpha}(I), Y^{-\alpha}(I)$ denote the corresponding restriction spaces.

Moreover, Proposition 2.11 in \cite{HTT} tells us that for $f \in L^1([0,T];H^{\alpha}(\Lambda))$, one has:
$$\int_0^t e^{i(t-\tau)\Delta}f(\tau) d\tau \in X^{\alpha}([0,T])$$
and:
\begin{equation}
\label{eq:Duhamelbound}
\left\|\int_0^t e^{i(t-\tau)\Delta} f(\tau) d\tau\right\|_{X^{\alpha}([0,T])} \lesssim \|f\|_{L^1([0,T];H^{\alpha}(\Lambda))}.
\end{equation}
This is a useful estimate on the Duhamel terms.

Given a dyadic integer $N$, let $P_N$ denote the Littlewood-Paley projection to frequencies of order $N$. In other words, we start with a non-negative, even function $\psi \in C_0^{\infty}(-2,2)$, which equals 1 on $[-1,1]$. For $p \in \mathbb{Z}^3$, we let $\psi_N(p)$ equal:
\begin{equation}
\notag
\begin{cases}
\psi \big(\frac{|p|}{N}\big)-\psi \big(\frac{2|p|}{N}\big),\,\mbox{for}\,\,N \geq 2.\\
\psi(|p|),\,\,\mbox{for}\,\,N=1.
\end{cases}
\end{equation}
Then, $P_N$ is defined on $L^2(\Lambda)$ as:
$$(P_N f)\,\,\widehat{}\,\,(p):=\psi_N(p) \widehat{f}(p).$$
In our paper, we will use the following estimate, which was first proved in \cite{ }.
\begin{proposition}
\label{PropositionStar}(Proposition 3.5 in \cite{HTT})
There exists $\delta>0$ such that for any $N_1 \geq N_2 \geq N_3 \geq 1$, and any finite interval $I \subseteq \mathbb{R}$, one has:
\begin{equation}
\label{eq:Star}
\|\prod_{j=1}^{3} P_{N_j}u_j \|_{L^2(I \times \Lambda)} \lesssim N_2 N_3 \max\,
\Big\{\frac{N_3}{N_1},\frac{1}{N_2}\Big\}^{\delta} \prod_{j=1}^{3} \|P_{N_j}u_j\|_{Y^0(I)}.
\end{equation}
The implied constant depends only on the size of the interval $I$.
\end{proposition}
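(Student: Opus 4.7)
The plan is to prove this trilinear $L^2$-estimate by reducing the $Y^0$-hypothesis to free Schr\"odinger evolutions via atomic decomposition, then invoking a bilinear Strichartz estimate on $\mathbb{T}^3$ together with an orthogonality-based decomposition of the middle-frequency factor.

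First, by the embeddings $Y^0 \hookrightarrow V^2_\Delta L^2 \hookrightarrow U^p_\Delta L^2$ valid for every $p > 2$ and the atomic structure of $U^p$, it suffices (up to an $\epsilon$-loss absorbed into the final exponent $\delta$) to prove the inequality for free solutions $u_j = e^{it\Delta} f_j$ with $\widehat{f_j}$ supported on the annulus $\{|\xi|\sim N_j\}$ and $\|f_j\|_{L^2} \sim \|P_{N_j}u_j\|_{Y^0}$. Indeed, $U^p$-atoms are step functions whose steps are free evolutions, so an estimate established for free solutions transfers to the ambient space by the triangle inequality and $\ell^p$-summation.

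The key analytic input is a bilinear Strichartz estimate on $\mathbb{T}^3$: for $N \geq M \geq 1$ and free solutions $u = e^{it\Delta} P_N f$, $v = e^{it\Delta} P_M g$,
\begin{equation*}
\|u \cdot v\|_{L^2(I \times \mathbb{T}^3)} \lesssim M\Bigl(\tfrac{M}{N} + \tfrac{1}{M}\Bigr)^{\delta_0}\|f\|_{L^2}\|g\|_{L^2}
\end{equation*}
for some $\delta_0 > 0$. Its proof Fourier-expands $\|u\cdot v\|_{L^2}^2$ into a sum over lattice quadruples $(\xi_1,\xi_2,\eta_1,\eta_2)\in(\mathbb{Z}^3)^4$ satisfying the momentum and energy resonance relations $\xi_1+\xi_2=\eta_1+\eta_2$ and $|\xi_1|^2+|\xi_2|^2=|\eta_1|^2+|\eta_2|^2$, constraining the quadruple to the intersection of a hyperplane and a sphere in lattice space. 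A divisor-type lattice counting argument (in the spirit of Bourgain \cite{B} and analogous to the number-theoretic lemmas in Subsection \ref{NumberTheory} below) then produces the polynomial improvement $\delta_0 > 0$ over the trivial bound, with Bourgain's $L^p$-Strichartz inequality (Theorem \ref{StrichartzT3}) serving as the base case.

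To pass to the trilinear inequality with the precise $N_2 N_3$-dependence, I apply the bilinear estimate to the extreme pair $(P_{N_1}u_1, P_{N_3}u_3)$ and decompose the middle factor $P_{N_2}u_2 = \sum_\alpha Q_\alpha u_2$ into Fourier projections onto lattice cubes of side $M$ centered at $\alpha\in\mathbb{Z}^3$ with $|\alpha|\sim N_2$, with $M$ calibrated so that the triple products $(P_{N_1}u_1)(Q_\alpha u_2)(P_{N_3}u_3)$ have approximately disjoint spacetime Fourier supports as $\alpha$ varies. Almost-orthogonality in $L^2(I\times\Lambda)$ then converts the norm into an $\ell^2$-sum in $\alpha$; for each summand, Bernstein's inequality gives $\|Q_\alpha u_2\|_{L^\infty_{t,x}}\lesssim M^{3/2}\|Q_\alpha u_2\|_{L^\infty_t L^2_x}$, while the bilinear factor contributes $N_3(\tfrac{N_3}{N_1}+\tfrac{1}{N_3})^{\delta_0}$. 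Summing by Plancherel and balancing $M$ against the orthogonality gain produces the pre-gain factor $N_2 N_3$, and the bilinear factor delivers the strict improvement $\max\{N_3/N_1, 1/N_2\}^\delta$. The principal obstacle is the delicate choice of $M$ together with the case analysis this forces: in the regime $N_1 \gg N_2\gtrsim N_3$ the aspect ratio $N_3/N_1$ drives the bilinear gain, while in the regime of minimal frequency separation $N_1 \sim N_2 \sim N_3$ the volume factor $1/N_2$ carries the improvement, and the $\epsilon$-losses accumulated from the atomic decomposition and Bernstein must be tracked carefully through both regimes in order that the final $\delta > 0$ be strictly positive, even if quantitatively small.
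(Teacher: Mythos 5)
The paper does not contain a proof of this statement: it is quoted verbatim from Herr--Tataru--Tzvetkov \cite{HTT} (their Proposition~3.5), so there is no in-paper argument to compare against, and I assess your attempt on its own terms. Your overall skeleton --- pass to free evolutions via atomic decomposition, invoke a bilinear $L^2$-estimate with polynomial gain, tile one factor by frequency cubes for $\ell^2$-orthogonality, and Bernstein the remaining factor --- is indeed the strategy used in \cite{HTT}. However, you place the cube decomposition on the wrong factor, and this choice breaks the argument at two distinct points.

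First, the almost-orthogonality fails as stated. If $Q_\alpha$ denotes cubes of side $M$ tiling the $N_2$-annulus, the product $(P_{N_1}u_1)(Q_\alpha u_2)(P_{N_3}u_3)$ has spatial Fourier support inside $A_1 + Q_\alpha + A_3$ where $A_1 = \{|\xi|\sim N_1\}$. Since $A_1$ has diameter $\sim N_1 \geq N_2 \gg M$, for any $\alpha,\alpha'$ with $|\alpha-\alpha'|\lesssim N_2$ one may choose $\xi_1,\xi_1'\in A_1$ with $\xi_1-\xi_1' = \alpha'-\alpha + O(M)$, so the sets $A_1+Q_\alpha+A_3$ and $A_1+Q_{\alpha'}+A_3$ overlap heavily; no calibration of $M$ repairs this. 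The decomposition must instead be of the \emph{highest}-frequency factor $P_{N_1}u_1$ into cubes $C$ of side $\sim N_2$: then $(P_C u_1)(P_{N_2}u_2)(P_{N_3}u_3)$ has Fourier support in a bounded dilate of $C$, giving bounded overlap, and the $\ell^2$-structure of $Y^0$ delivers $\sum_C\|P_C u_1\|_{Y^0}^2 = \|P_{N_1}u_1\|_{Y^0}^2$, which is what makes the square-summation close. Second, and for the same reason, your bilinear gain is too weak: applying the bilinear estimate directly to the pair $(P_{N_1}u_1,P_{N_3}u_3)$ only produces $\bigl(\tfrac{N_3}{N_1}+\tfrac{1}{N_3}\bigr)^{\delta_0}$, which is $O(1)$ in the regime $N_1\sim N_2\gg N_3\sim 1$, whereas the proposition requires a gain of $N_2^{-\delta}$ there. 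The factor $\tfrac{1}{N_2}$ in the target is not a property of the lowest-frequency factor; it arises because the cube-localization of $u_1$ at scale $N_2$ sharpens the bilinear estimate for the localized pair to $\bigl(\tfrac{N_3}{N_1}+\tfrac{1}{N_2}\bigr)^{\delta_0}$. The cube decomposition of $P_{N_1}u_1$ thus does double duty --- supplying both the orthogonality and the improved bilinear gain --- and the argument does not survive transplanting it to $P_{N_2}u_2$.
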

We remark that, in \cite{HTT}, this result is stated with the $Y^0$ norm instead of the $Y^0(I)$ norm on the right-hand side. However, the result with the $Y^0(I)$ norm immediately follows by the definition of the restriction norm.
\section{Spacetime Bound}
\label{Spacetime Bound}
An important step in our proof will be the spacetime bound for the free evolution
$\mathcal{U}^{(k)}(t)$ defined in \eqref{eq:FreeEvolution}. We note that the bound resembles spacetime estimates for the free evolution used in the theory of dispersive equations. This is a manifestation of the connection between the Gross-Pitaevski hierarchy and the Schr\"{o}dinger equation. The bound that we prove is:
\begin{proposition}
\label{Spacetime Bound}
For $\alpha>1$, there exists $C_1=C_1(\alpha)>0$ such that, for all sequences $(\gamma_0^{(n)}) \in \mathcal{H}^{\alpha}$, for all $k \in \mathbb{N}$, and for all $j \in \{1,\ldots,k\}$, one has:
\begin{equation}
\label{eq:spacetime}
\|S^{(k,\alpha)}B_{j,k+1} \, \mathcal{U}^{(k+1)}(t)\gamma_0^{(k+1)}\|_{L^2([0,2\pi]\times \Lambda^k \times \Lambda^k)} \leq C_1 \|S^{(k+1,\alpha)} \gamma_0^{(k+1)}\|_{L^2(\Lambda^{k+1} \times \Lambda^{k+1})}.
\end{equation}
\end{proposition}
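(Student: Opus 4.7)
The plan is to reduce the spacetime estimate to a uniform pointwise bound on the multiplier $I(\tau,p)$ of (\ref{eq:1expressionI}), and then prove that bound via the lattice-point counting lemmas flagged in the introduction.

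First, decompose $B_{j,k+1} = B^+_{j,k+1} - B^-_{j,k+1}$ and handle only $B^+$, since the $B^-$ estimate is obtained by swapping primed and unprimed variables. Passing to the spacetime Fourier transform in $(t,\vec x_k,\vec x_k')$ and using that $\mathcal{U}^{(k+1)}(t)$ multiplies $\widehat{\gamma_0^{(k+1)}}$ by the phase $e^{it(|\vec n|^2-|\vec n'|^2)}$, the integration over the contracted variable $x_{k+1}$ in $B^+_{j,k+1}$ produces a sum over two auxiliary frequencies $n,m\in\mathbb{Z}^3$ (the duals of $x_{k+1}$ and its mirror), while the $t$-Fourier transform on $[0,2\pi]$ yields a Kronecker delta enforcing $\tau+|p-n-m|^2+|n|^2-|m|^2=0$, with $p=n_j$; all spectator frequencies $n_\ell,n_\ell'$ ($\ell\ne j$) pass through unchanged. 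Applying Plancherel and then Cauchy--Schwarz in $(m,n)$ with the weight
\[
\frac{\langle p\rangle^{2\alpha}}{\langle p-n-m\rangle^{2\alpha}\langle n\rangle^{2\alpha}\langle m\rangle^{2\alpha}}
\]
reduces the spacetime estimate to the uniform bound $\sup_{\tau,p} I(\tau,p)\lesssim_\alpha 1$, since the complementary Cauchy--Schwarz factor reassembles, after summing over $\tau$ and the spectator frequencies, into $\|S^{(k+1,\alpha)}\gamma_0^{(k+1)}\|_{L^2}^2$.

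Next, rewrite the multiplier in the equivalent form (\ref{1thesum2}), so that the delta encodes the linear constraint $2\langle n,m\rangle=\tau+|p|^2$. Decompose dyadically via $\langle m-p\rangle\sim M_1$, $\langle n-p\rangle\sim M_2$, $\langle p-n-m\rangle\sim M_3$, and set $R:=\max(M_1,M_2,M_3)$; on each dyadic piece $n$ lies both in a ball of radius $O(R)$ and, for each fixed $m$, on the affine plane $\{n\in\mathbb Z^3:\langle n,m\rangle=(\tau+|p|^2)/2\}$. Lemma~\ref{MainCountingLemma} bounds the number of such $n$ by $R+R^2/|m_*|$. The $R$ term is summed directly against $\langle m-p\rangle^{-2\alpha}$ over the $m$-shell, while the $R^2/|m_*|$ term is summed using Lemma~\ref{sum1mstar}, which gives $\sum_m 1/|m_*|\lesssim R^2$ on any ball of radius $R$. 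Combining these counts with the numerator $\langle p\rangle^{2\alpha}$ and the three denominator weights produces a geometric series in $M_1,M_2,M_3$; a case analysis separating the regimes according to which of these scales (and $\langle p\rangle$) dominates then confirms convergence precisely when $\alpha>1$.

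The main obstacle is to recover the sharp exponent $\alpha>1$ rather than the $\alpha>5/4$ produced by the trivial plane count of $O(R^2)$. This is where the arithmetic factor $|m_*|$ is essential: when $m$ is far from being a primitive vector of small norm, the sublattice $\Lambda_m=\{x\in\mathbb Z^3:\langle x,m\rangle=0\}$ is coarse, of determinant $|m_*|$, so its induced density on the affine plane is $O(1/|m_*|)$, producing the refined count. The paired estimate $\sum_{m\in B_R}1/|m_*|\lesssim R^2$ is itself a nontrivial arithmetic statement expressing the sparsity of integer vectors with small $|m_*|$. Balancing these two gains against the three Sobolev denominators throughout the dyadic bookkeeping, without overcounting in the borderline regime where $\langle m\rangle$, $\langle n\rangle$ and $\langle p\rangle$ are all comparable, is the delicate step; the fact that the resulting series only diverges logarithmically at $\alpha=1$ is consistent with the sharpness stated in Proposition~\ref{Alpha1unbounded}.
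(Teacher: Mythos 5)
You have the overall architecture right: reduce the estimate by Plancherel and Cauchy--Schwarz to a uniform bound on the multiplier $I(\tau,p)$, rewrite the delta constraint as $2\langle n,m\rangle=\tau+|p|^2$ so that $m$ (or $n$) plays the role of a plane normal, dyadically decompose in the three scales $\langle m-p\rangle$, $\langle n-p\rangle$, $\langle p-n-m\rangle$, and control each dyadic piece by Lemma~\ref{MainCountingLemma} together with Lemma~\ref{sum1mstar}. All of this matches the paper, and your remarks about the role of $|m_*|$ as the determinant of $\Lambda_m$ are correct.

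There is, however, a genuine gap in the counting step. The fix-$m$, count-$n$-on-a-plane strategy you describe produces, for $m$ in a ball of radius $\sim 2^{j_1}$ and, per $m$, $n$ in a ball of radius $\sim 2^{\min(j_2,j_3)}$, the bound
\[
\# E_{\tau,p}(j)\ \lesssim\ 2^{3j_1+\min(j_2,j_3)}+2^{2j_1+2\min(j_2,j_3)}.
\]
This is $\lesssim 2^{2j_{\min}+2j_{\mid}}$ only when $j_1$ (or, by symmetry, $j_2$) is the smallest of the three exponents. When $j_3$ — the $\langle p-n-m\rangle$ scale — is the smallest, i.e.\ when $m+n$ is very close to $p$, the first term $2^{j_3}\cdot 2^{3j_1}$ overshoots, and plugging it into the dyadic sum only converges for $\alpha>3/2$, which is no better than the Sobolev-embedding bound. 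This is the regime that costs you the sharp exponent, and it is not the ``all scales comparable'' regime you flag as delicate. (A smaller point: taking $R:=\max(M_1,M_2,M_3)$ as the radius fed into Lemma~\ref{MainCountingLemma} is not correct either; for fixed $m$ the relevant radius is $\min(M_2,M_3)$, and even with that repair the $j_3$-minimal case fails by the count above.)

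The paper handles the $j_3=\min$ case with a qualitatively different geometric argument that your proposal does not contain. One covers $E_{\tau,p}(j)\subset\mathbb{Z}^3\times\mathbb{Z}^3$ by boxes of side $\sim 2^{j_3}$, within each of which the counting lemma gives $O(2^{4j_3})$ points, and then bounds the number of relevant boxes by Lemma~\ref{seplemma} in terms of the Lebesgue measure of the $2^{j_3+1}$-neighborhood of $E_{\tau,p}(j)$ in $\mathbb{R}^6$. That measure estimate rests on the algebraic identity $2\tau+|p|^2+|n-m|^2=\langle m+n-p,\,m+n+p\rangle$, which, once $m+n$ is pinned to a ball of radius $\sim 2^{j_3}$, forces $m-n$ into a thin spherical shell of thickness $\sim 2^{j_3}$ and radius $\sim 2^{\min(j_1,j_2)}$; together this yields a volume $\sim 2^{4j_3+2\min(j_1,j_2)}$ and hence $\# E_{\tau,p}(j)\lesssim 2^{2j_3+2\min(j_1,j_2)}$ as required. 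Without some analogue of this annular-shell neighborhood estimate, your argument does not close for $1<\alpha\le 3/2$.
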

We note that, by using a Sobolev embedding argument as in \cite{CL}, one can directly prove the bound $(\ref{eq:spacetime})$ when $\alpha>\frac{3}{2}$. Furthermore, an application of the number theoretic techniques used in \cite{KSS} adapted to the three-dimensional setting gives the bound $(\ref{eq:spacetime})$ when $\alpha>\frac{5}{4}$.
Moreover, we show that the condition $\alpha>1$ is sharp by proving the following two claims:
The first claim states that, in the endpoint case $\alpha=1$, one obtains an upper bound involving a logarithmic loss of derivative.
\begin{proposition}
\label{Alpha1log}
Suppose that $N \geq 1$ and suppose that $(\gamma_{0,N}^{(k)}) \in \mathcal{H}^1$ is a sequence such that for all $k \in \mathbb{N}$, one has $(\gamma_{0,N}^{(k)})\,\,\widehat{}\,\,(n_1,\ldots,n_k; n'_1, \ldots, n'_k)=0$ if some $|n_j|>N$ or if some $|n'_j|>N$. Then, there exists a constant $C_1>0$ such that for all $k \in \mathbb{N}$ and for all $j \in \{1,\ldots,k\}$, one has:
\begin{equation}
\label{logAlpha1}
\|S^{(k,1)} B_{j,k+1} \mathcal{U}^{(k+1)}(t) \gamma_{0,N}^{(k+1)}\|_{L^2([0,2\pi] \times \Lambda^k \times \Lambda^k)} \leq C_1 \ln N \, \|S^{(k+1,1)} \gamma_{0,N}^{(k+1)}\|_{L^2(\Lambda^{k+1} \times \Lambda^{(k+1)})}.
\end{equation}
\end{proposition}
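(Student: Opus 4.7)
The plan is to mirror the Fourier-analytic reduction that drives the proof of Proposition \ref{Spacetime Bound}. By Plancherel, the symmetry conditions \eqref{eq:conditionsongamma}, and the standard Cauchy-Schwarz/duality argument of \cite{KM}, the estimate \eqref{logAlpha1} is equivalent to the pointwise multiplier bound
\begin{equation*}
\sup_{\tau \in \mathbb{Z},\, p \in \mathbb{Z}^3} I_N(\tau, p) \lesssim (\ln N)^2,
\end{equation*}
where
\begin{equation*}
I_N(\tau, p) := \sum_{\substack{n, m \in \mathbb{Z}^3 \\ |n|,|m|,|p-n-m| \lesssim N}} \frac{\delta(\tau + |p|^2 - 2\langle n, m\rangle)\, \langle p \rangle^2}{\langle m-p \rangle^2\, \langle n-p \rangle^2\, \langle p-n-m \rangle^2}.
\end{equation*}
This is precisely the multiplier \eqref{1thesum2} at $\alpha = 1$, truncated to the frequency ball of radius $\sim N$ dictated by the assumed frequency localization on $\gamma_{0,N}^{(k+1)}$. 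The bound $I_N \lesssim (\ln N)^2$ then yields the factor $\ln N$ in \eqref{logAlpha1} after taking the square root in the $L^2$ estimate.

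Next, I would repeat the dyadic decomposition and lattice counting performed in Subsection \ref{ConclusionProofProposition}. For fixed $m$ the delta function restricts $n$ to the plane $\{n \in \mathbb{Z}^3 : \langle m,n\rangle = (\tau + |p|^2)/2\}$, so Lemma \ref{MainCountingLemma} bounds the number of admissible $n$ in a dyadic ball of radius $R$ by $R + R^2/|m_*|$. Weighting the inner sum by $\langle n-p \rangle^{-2}\langle p-n-m \rangle^{-2}$ and then summing over $m$ against $\langle m-p \rangle^{-2}$ via Lemma \ref{sum1mstar} is exactly the mechanism giving boundedness when $\alpha > 1$.

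The only difference at $\alpha = 1$ is that the dyadic series which are geometrically convergent for $\alpha > 1$ become harmonic: each variable contributes $\sum_{1 \leq M \lesssim N} 1/M$ instead of $\sum_M M^{-2(\alpha-1)}$. The frequency cutoff forces only $O(\ln N)$ dyadic scales per variable to contribute, so each such harmonic sum is bounded by $\ln N$ rather than being infinite. Two such logarithmic losses appear, one from the inner counting sum on the plane and one from the outer sum involving $|m_*|^{-1}$, producing the desired $I_N(\tau, p) \lesssim (\ln N)^2$.

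The main obstacle will be isolating the borderline regime $|m-p| \sim |n-p| \sim |p-n-m| \sim N$, where every denominator saturates and the weighted counting is critical. In this regime the trivial count $R^2$ for lattice points on a plane would yield a polynomial loss, and it is essential to invoke the refined bound $R^2/|m_*|$ from Lemma \ref{MainCountingLemma} together with the averaging $\sum_m |m_*|^{-1} \lesssim R^2$ of Lemma \ref{sum1mstar}. All other dyadic regimes are strictly subcritical in the sense that at least one variable has a gap from scale $N$, and the corresponding summation produces extra decay that absorbs any logarithms. The critical borderline contribution is precisely the same configuration that will later be exploited, in Proposition \ref{Alpha1unbounded}, to show that this logarithmic loss is genuine and the spacetime bound fails at $\alpha = 1$.
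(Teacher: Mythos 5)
Your proposal reproduces the paper's argument: reduce to the multiplier \eqref{eq:expressionI}, apply the dyadic decomposition together with the lattice-counting Lemmas \ref{MainCountingLemma} and \ref{sum1mstar}, and use the frequency truncation to cap the dyadic indices at $\log_2 N + O(1)$, so that each of the two series in \eqref{alphale1} that converges geometrically for $\alpha > 1$ degenerates to $\lesssim \ln N$ at $\alpha = 1$, yielding $(\ln N)^2$ for the multiplier and hence $\ln N$ for the estimate.

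Your picture of \emph{where} the logarithm comes from is not correct, however, and is worth fixing since it bears on how one would sharpen or transfer the argument. You single out a ``borderline regime'' $|m-p| \sim |n-p| \sim |p-n-m| \sim N$ and assert that all other dyadic configurations are strictly subcritical with extra decay that absorbs the logarithms. In fact, at $\alpha = 1$ there is no decay at all in the two smaller indices: the count $\# E_{\tau,p}(j) \lesssim 2^{2 j_{\min} + 2 j_{\mid}}$ exactly matches the weight $2^{-2(j_{\min} + j_{\mid})}$, so every admissible dyadic triple contributes $O(1)$, and the two factors of $\ln N$ arise by accumulating $O(\ln N)$ dyadic scales in each of $j_{\min}$ and $j_{\mid}$, not from a single saturated configuration. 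For the same reason the attribution of one log to ``the inner counting sum on the plane'' and one to ``the outer sum over $|m_*|^{-1}$'' is misleading: within a single dyadic cube the two lemmas produce a \emph{bounded} contribution at $\alpha=1$, and the logs come from summing across scales afterwards. Relatedly, the counterexample driving Proposition \ref{Alpha1unbounded} does not sit in your borderline regime at all: there one takes $n = p$, so $|n - p| = 0$, and the logarithmic divergence is produced by summing $m$ over a plane orthogonal to $p$ with $|m|$ ranging over all scales from $1$ to $|p|$.
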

The second claim states that, when $\alpha=1$, one cannot choose a uniform constant $C_1>0$ such that $(\ref{eq:spacetime})$ holds.
\begin{proposition}
\label{Alpha1unbounded}
There does not exist $C_1>0$ such that for all $(\gamma_0^{(n)}) \in \mathcal{H}^1$, $k \in \mathbb{N}$ and $j \in \{1,\ldots,k\}$ one has:
\begin{equation}
\label{Alpha1}
\|S^{(k,1)}B_{j,k+1} \, \mathcal{U}^{(k+1)}(t)\gamma_0^{(k+1)}\|_{L^2([0,2\pi] \times \Lambda^k \times \Lambda^k)} \leq C_1 \|S^{(k+1,1)} \gamma_0^{(k+1)}\|_{L^2(\Lambda^{k+1} \times \Lambda^{k+1})}.
\end{equation}
\end{proposition}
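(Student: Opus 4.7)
The goal is to exhibit, for each large integer $N$, an admissible two-particle density matrix $\gamma_{0,N}^{(2)}$ with Fourier support in $\{|n_j|, |n_j'| \leq N\}$ so that
\[
\frac{\|S^{(1,1)} B_{1,2}\, \mathcal{U}^{(2)}(t)\gamma_{0,N}^{(2)}\|_{L^2([0,2\pi] \times \Lambda \times \Lambda)}}{\|S^{(2,1)}\gamma_{0,N}^{(2)}\|_{L^2(\Lambda^2 \times \Lambda^2)}} \longrightarrow \infty \quad \text{as } N \to \infty,
\]
which precludes any uniform constant $C_1$ in \eqref{Alpha1}. The general $k \geq 2$ case follows by tensoring the counterexample with any fixed $(k-1)$-particle piece in the remaining coordinates, and the case $j \neq 1$ by a relabelling, so it suffices to treat $k = j = 1$.

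First, I would rewrite the spacetime $L^2$-norm on the left via Plancherel. Using the kernel of $\mathcal{U}^{(2)}(t)$ together with the Fourier representations of $B^+_{1,2}$ and $B^-_{1,2}$, the squared norm takes the form
\[
\sum_{\tau \in \mathbb{Z},\, p, p' \in \mathbb{Z}^3} \langle p\rangle^{2}\langle p'\rangle^{2}\, \Bigl| \mathcal{S}^{+}(\tau, p, p') - \mathcal{S}^{-}(\tau, p, p')\Bigr|^{2},
\]
where $\mathcal{S}^{\pm}$ are sums of Fourier coefficients of $\gamma_{0}^{(2)}$ along the respective resonant quadrics, obtained from $B^{\pm}_{1,2}$. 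Applying Cauchy--Schwarz to each piece leads, after the substitution used to derive \eqref{1thesum2}, to control by the multiplier $I(\tau, p)$ with $\alpha = 1$. The crucial observation is that $I(\tau, p)$ is \emph{not} uniformly bounded: truncating its range of summation to frequencies of size at most $N$ produces a value of order $\log N$ for appropriate choices of $(\tau, p)$, as can be seen by dyadic decomposition in the near-orthogonality regime $\langle n, m\rangle = (\tau + |p|^2)/2$.

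Next I would implement the decomposition hinted at in the introduction. The operator $B_{1,2}$ is split as $B_{1,2} = A_{1} + A_{2}$, where on the Fourier side $A_{2}$ isolates the portion of the resonant sum over $(n, m)$ corresponding to the ``near-orthogonality'' block carrying the logarithmic divergence of $I(\tau, p)$, and $A_{1}$ collects the complementary, uniformly bounded part. The main point is to check that $A_1$ satisfies \eqref{eq:spacetime} at $\alpha = 1$ uniformly in $N$, by number-theoretic counting analogous to Lemma \ref{MainCountingLemma} applied on the complementary regime where the lattice-plane intersections remain controllable. The counterexample data $\widehat{\gamma_{0,N}^{(2)}}$ is then constructed as a suitably normalized indicator function of a set $E_{N} \subset (\mathbb{Z}^3)^4$ tailored to saturate the Cauchy--Schwarz bound for $A_{2}$: the set $E_{N}$ concentrates on quadruples whose frequencies are distributed across all dyadic scales from $1$ to $N$ in proportions that recover the $\log N$ contribution $\sum_{1 \leq N_{1}, N_{2} \leq N} \max(N_{1}, N_{2})^{-2} \sim \log N$.

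With $\gamma_{0,N}^{(2)}$ in hand I would evaluate both sides directly: the denominator $\|S^{(2,1)}\gamma_{0,N}^{(2)}\|_{L^{2}(\Lambda^2 \times \Lambda^2)}$ reduces by Plancherel to an explicit sum of known weights over $E_{N}$, and the numerator is bounded below by isolating the contribution of the dominant $(\tau, p, p')$ to $\|A_{2}\mathcal{U}^{(2)}(t)\gamma_{0,N}^{(2)}\|_{L^{2}}$ via constructive interference of the resonant Fourier coefficients. The triangle inequality $\|B_{1,2}\gamma\| \geq \|A_{2}\gamma\| - \|A_{1}\gamma\|$ then yields a lower bound of the order $(c\log N - C)\|S^{(2,1)}\gamma_{0,N}^{(2)}\|_{L^{2}}$. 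A final symmetrization restores the symmetry conditions \eqref{eq:conditionsongamma} while changing the numerator and denominator by at most an absolute constant.

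\textbf{Main obstacle.} The delicate step is to organize $\gamma_{0,N}^{(2)}$ so that the logarithmic growth coming from $A_{2}$ genuinely survives the subtraction $B_{1,2} = B^{+}_{1,2} - B^{-}_{1,2}$. Concretely, the support $E_{N}$ must be chosen so that the $B^{+}$-contribution and the $B^{-}$-contribution land on essentially disjoint blocks of spacetime Fourier output, preventing the cancellation from eating the $\log N$ gain; equivalently, one must check that the decomposition $A_{1} + A_{2}$ isolates the divergence on only one of $B^{\pm}_{1,2}$. Once this combinatorial separation is secured by an appropriate choice of the set $E_{N}$ on the resonance variety, the estimation of both sides is a matter of careful bookkeeping using the lattice-point techniques developed in Section \ref{Spacetime Bound}.
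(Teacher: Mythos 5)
Your outline correctly identifies the right target (the logarithmic divergence of $I(\tau,p)$ at $\alpha=1$, i.e.\ Lemma~\ref{alpha1unbounded1}) and correctly identifies the main obstruction (cancellation between $B^{+}_{1,2}$ and $B^{-}_{1,2}$), but you stop short of actually resolving it — you state that ``the support $E_N$ must be chosen so that the $B^+$-contribution and the $B^-$-contribution land on essentially disjoint blocks'' without producing such a set. That is the whole content of the proposition; without a concrete construction the proof is not complete. Moreover, the auxiliary decomposition $B_{1,2}=A_1+A_2$ into a ``near-orthogonality'' part and a uniformly bounded remainder is a distraction: $A_2$ still contains both $B^+$- and $B^-$-type terms, so splitting $B_{1,2}$ this way does not touch the cancellation problem, and you would still be left needing the very same separation argument within $A_2$.

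The paper's resolution is both more direct and much sparser than what you sketch. One fixes $\bar p=(\kappa,0,0)$, $\bar q=0$, $\bar\tau=|\bar p|^2$, and constructs $\gamma_0^{(2)}$ whose Fourier support is a \emph{one-parameter family} of tuples $(\bar p - m, 0\,;\, 0, -m)$, $m\in\mathbb{Z}^3$, with weights chosen to saturate Cauchy--Schwarz in the log-divergent inner sum (cf.\ \eqref{logkappa}). At the single output frequency $(\bar\tau,\bar p;\bar q)$, the constraint in the $B^+$-piece forces $n=\bar p$ and leaves a full sum over $m$ of size $\gtrsim\sqrt{\ln\kappa}\,\|S^{(2,1)}\gamma_0^{(2)}\|_{L^2}$; the constraint in the $B^-$-piece, applied to the same data, forces $m=n=0$ and leaves exactly one term, which is bounded by $\|S^{(2,1)}\gamma_0^{(2)}\|_{L^2}$. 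So at this output frequency the $B^-$ contribution is negligible and no cancellation can occur — there is nothing to subtract. You should also note that the correct growth rate in this argument is $\sqrt{\log N}$, not $\log N$: the divergence of the multiplier $I(\tau,p)$ is $\sim\log N$, but after dualizing through Cauchy--Schwarz to produce data of unit $\ell^2$-norm one only recovers the square root. The missing ingredient in your proposal, then, is precisely the observation that a ``line-supported'' $\gamma_0^{(2)}$ collapses the $B^-$ sum to a single term; once that is in place, your global plan (Plancherel, isolate one output frequency, bound numerator below and denominator above) agrees with the paper's.
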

\begin{remark}
In the subsequent work \cite{SoSt} of the second two authors, it will be shown that the spacetime bound holds in a probabilistic sense whenever $\alpha>\frac{3}{4}$. More precisely, the collision operator is replaced by a randomized collision operator. A study of the local-in-time theory of the hierarchies obtained from the randomized collision operators in the work of the second author \cite{VS}.
\end{remark}
\medskip
Before we give the details of the proofs of the above results, let us make some preliminary observations.
\medskip
With $B^{\pm}_{j,k+1}$ defined as in \eqref{Bjk+} and \eqref{Bjk-}, we will prove the bound for the
term given by: $$S^{(k,\alpha)}B^{+}_{j,k+1} \,\mathcal{U}^{(k+1)}(t)\gamma_0^{(k+1)}$$ instead of for: $$S^{(k,\alpha)}B_{j,k+1} \,\mathcal{U}^{(k+1)}(t)\gamma_0^{(k+1)}.$$ The bound for the expression with $B^{-}_{j,k+1}$ is proved in an analogous way (one just exchanges the $n$'s and $m$'s), and the general claim then follows since by \eqref{Bjk}, we know that $B_{j,k+1}=B^{+}_{j,k+1}-B^{-}_{j,k+1}$. For simplicity of notation, let us consider without loss of generality the case when $j=1$. The other cases follow by symmetry.

We note that:
\begin{eqnarray}\label{FTSpace}
&&\big(S^{(k,\alpha)}B^{+}_{1,k+1} \, \mathcal{U}^{(k+1)}(t)\gamma_0^{(k+1)}\big)\,\,\widehat{}\,\,(t,\vec{n}_k;\vec{n}'_k)=\\\notag
&&\sum_{n_{k+1},n'_{k+1} \in \mathbb{Z}^3} e^{it(- |n_1-n_{k+1}+n'_{k+1}|^2-|\vec{n}_{k+1}|^2 + |n_1|^2 + |\vec{n}'_{k+1}|^2)}\\ \notag
&\times&
\prod_{j=1}^{k} \langle n_j \rangle^{\alpha} \prod_{j=1}^{k} \langle n'_j \rangle^{\alpha}\,
\cdot (\gamma_0^{(k+1)}) \,\,\widehat{}\,\,(n_1-n_{k+1}+n'_{k+1},n_2,\ldots,n_{k+1}; n'_1,\ldots, n'_{k+1}).\notag
\end{eqnarray}
Hence, the above expression is periodic in time with period $2\pi$. Thus, we need to restrict the time integral to the interval $[0,2\pi]$.
For fixed $\tau \in \mathbb{Z}$ and $p \in \mathbb{Z}^3$, we define:
\begin{equation}
\label{eq:expressionI}
I=I(\tau,p):=\sum_{n,m \in \mathbb{Z}^3} \frac{\delta(\tau+|p-n-m|^2+|n|^2-|m|^2) \langle p \rangle^{2\alpha}}
{\langle p-n-m \rangle^{2\alpha} \langle n \rangle^{2\alpha} \langle m \rangle^{2\alpha}}.
\end{equation}
where $\delta$ denotes the Kronecker delta function on $\mathbb{Z}$.
Arguing as in the proof of Theorem 1.3 in \cite{KM} or as in the Propositions 7.2 and 7.5 in \cite{KSS}, the estimate:
\begin{equation}
\label{eq:spacetimeFT+}
\|S^{(k,\alpha)}B^{+}_{j,k+1} \, \mathcal{U}^{(k+1)}(t)\gamma_0^{(k+1)}\|_{L^2([0,2\pi] \times \Lambda^k \times \Lambda^k)} \leq C_1 \|S^{(k+1,\alpha)} \gamma_0^{(k+1)}\|_{L^2(\Lambda^{k+1} \times \Lambda^{k+1})}
\end{equation}
follows from:
\begin{equation}
\label{eq:Proposition1bound}
I \leq C_1,\mbox{uniformly in}\, \tau\,\mbox{and}\, p.
\end{equation}

Let us recall this reduction in more detail. By taking the Fourier transform on $[0,2\pi] \times \Lambda^k \times \Lambda^k$ of the function on the left-hand side, and by taking the Fourier transform on $\Lambda^{k+1} \times \Lambda^{k+1}$ of the function on the right-hand side, Plancherel's theorem implies that $(\ref{eq:spacetimeFT+})$ is equivalent to:
$$\|\big(S^{(k,\alpha)}B^{+}_{1,k+1} \, \mathcal{U}^{(k+1)}(t)\gamma_0^{(k+1)}\big)\,\,\widetilde{}\,\,\|_{\ell^2(\mathbb{Z}_{\tau}
\times \mathbb{Z}^{3k} \times \mathbb{Z}^{3k})} \leq C_1 \|\big(S^{(k+1,\alpha)}\,\gamma_0^{(k+1)}\big)\,\,\widehat{}\,\, \|_{l^2(\mathbb{Z}^{3(k+1)} \times \mathbb{Z}^{3(k+1)})}
$$
It can be shown that:
$$\big(S^{(k,\alpha)}B^{+}_{1,k+1} \, \mathcal{U}^{(k+1)}(t)\gamma_0^{(k+1)}\big)\,\,\widetilde{}\,\,(\tau,\vec{n}_k;\vec{n}'_k)=$$
$$\sum_{n_{k+1},n'_{k+1} \in \mathbb{Z}^3} \delta(\tau + |n_1-n_{k+1}+n'_{k+1}|^2 + |\vec{n}_{k+1}|^2 - |n_1|^2 - |\vec{n}'_{k+1}|^2) \cdot$$
\begin{equation}
\label{FTSpaceTime}
\prod_{j=1}^{k} \langle n_j \rangle^{\alpha} \prod_{j=1}^{k} \langle n'_j \rangle^{\alpha}
\,\cdot \widehat{\gamma_0}^{(k+1)}(n_1-n_{k+1}+n'_{k+1},n_2,\ldots,n_{k+1}; n'_1,\ldots, n'_{k+1}).
\end{equation}
By applying the Cauchy-Schwarz inequality in $n_{k+1}$ and $n'_{k+1}$, we observe that, for fixed $\tau \in \mathbb{Z}$:
\begin{eqnarray*}
&&|\big(S^{(k,\alpha)}B^{+}_{1,k+1} \, \mathcal{U}^{(k+1)}(t)\gamma_0^{(k+1)}\big)\,\,\widetilde{}\,\,(\tau,\vec{n}_k;\vec{n}'_k)|^2\\
&\leq &\Big[ \sum_{n_{k+1},n'_{k+1} \in \mathbb{Z}^3} \frac{\delta(\tau + |n_1-n_{k+1}+n'_{k+1}|^2 + |\vec{n}_{k+1}|^2 - |n_1|^2 - |\vec{n}'_{k+1}|^2) \langle n_1 \rangle^{2\alpha}}
{\langle n_1-n_{k+1}+n'_{k+1}\rangle^{2\alpha} \langle n_{k+1} \rangle^{2\alpha} \langle n'_{k+1} \rangle^{2\alpha}}\Big]\\
&\times&\Big[\sum_{n_{k+1},n'_{k+1} \in \mathbb{Z}^3} \delta(\tau + |n_1-n_{k+1}+n'_{k+1}|^2 + |\vec{n}_{k+1}|^2 - |n_1|^2 - |\vec{n}'_{k+1}|^2) \Big.\\
&&\Big.|(S^{(k+1,\alpha)} \gamma_0^{(k+1)})\,\,\widehat{}\,\,(n_1-n_{k+1}+n'_{k+1},n_2,\ldots,n_{k+1};\vec{n}'_{k+1})|^2\Big].
\end{eqnarray*}
Hence, in order to prove Proposition \ref{Spacetime Bound}, we need to obtain pointwise bounds on:
$$\tilde{I}(\tau,\vec{n}_k;\vec{n}'_k):=$$
\begin{equation}
\label{eq:Jtaup}
\sum_{n_{k+1},n'_{k+1} \in \mathbb{Z}^3} \frac{\delta(\tau + |n_1-n_{k+1}+n'_{k+1}|^2 + |\vec{n}_{k+1}|^2 - |n_1|^2 - |\vec{n}'_{k+1}|^2) \langle n_1 \rangle^{2\alpha}}
{\langle n_1-n_{k+1}+n'_{k+1}\rangle^{2\alpha} \langle n_{k+1} \rangle^{2\alpha} \langle n'_{k+1} \rangle^{2\alpha}}
\end{equation}
We remark that we are originally looking at:
$$\tau + |n_1-n_{k+1}+n'_{k+1}|^2+|\vec{n}_{k+1}|^2-|n_1|^2-|\vec{n}'_{k+1}|^2=$$
$$= \tau + |n_1-n_{k+1}+n'_{k+1}|^2+|n_1|^2 +$$
$$+\sum_{\ell=2}^{k} |n_{\ell}|^2+|n_{k+1}|^2-|n_1|^2 - \sum_{\ell=1}^{k}|n'_{\ell}|^2-|n'_{k+1}|^2=$$
$$=(\tau + \sum_{\ell=2}^{k}|n_{\ell}|^2 - \sum_{\ell=1}^{k}|n'_{\ell}|^2)
+|n_1-n_{k+1}+n'_{k+1}|^2+|n_{k+1}|^2-|n'_{k+1}|^2$$
Hence, by making the change of variable $\tau \mapsto \tau + \sum_{\ell=2}^{k} |n_{\ell}|^2- \sum_{\ell=1}^{k}|n'_{\ell}|^2$, and by taking $p:=n_1, n:=n_{k+1}$, and $m:=-n_{k+1}'$, it follows that we indeed need to show $(\ref{eq:Proposition1bound})$. The rest of this section is devoted to the proof of $(\ref{eq:Proposition1bound})$.
\subsection{Rewriting the sum in $(\ref{eq:Proposition1bound})$}
\label{RewriteSum}
The first step is to rewrite the sum $I(\tau,p)$ from $(\ref{eq:Proposition1bound})$. By using the identity
\begin{align*}
\tau + |p-n-m|^2 + |n|^2 - |m|^2 & = \tau + |n|^2 + |p-n|^2 - 2 \langle p-n, m \rangle \\
& = \tau + |p|^2 - 2 \langle p-n, n+m \rangle
\end{align*}
we can rewrite the sum as follows:
\begin{align}
I(\tau,p) & = \sum_{m,n \in \mathbb{Z}^3} \frac{\delta(\tau + |p|^2 - 2 \langle p-n, n+m \rangle) \ang{p}^{2\alpha}}{\ang{p-n-m}^{2\alpha} \ang{n}^{2\alpha} \ang{m}^{2\alpha}} \nonumber \\ & = \sum_{m,n \in \mathbb{Z}^3} \frac{\delta(\tau + |p|^2 + 2 \langle n-p,m\rangle) \ang{p}^{2\alpha}}{\ang{p-m}^{2\alpha} \ang{n}^{2\alpha} \ang{m-n}^{2\alpha}} \nonumber \\
& = \sum_{m,n \in \mathbb{Z}^3} \frac{\delta(\tau + |p|^2 - 2 \langle n, m \rangle) \ang{p}^{2\alpha}}{\ang{m-p}^{2\alpha} \ang{n-p}^{2\alpha} \ang{p-n-m}^{2\alpha}}. \label{thesum2}
\end{align}

We will use the form \eqref{thesum2} in order to prove that $I(\tau,p)$ is uniformly bounded for any $\alpha > 1$. The point is that, for fixed $m \in \mathbb{Z}^3$, all the points $n \in \mathbb{Z}^3$ satisfying the condition $\tau+|p|^2-2 \langle n, m \rangle=0$ lie in a plane with normal vector $m$. By symmetry, the same property holds if we reverse the roles of $m$ and $n$. Having rewritten the sum this way, we can use the number theoretic properties of the normal vector.
\begin{remark}
We observe that there is never any decay along $I(|p|^2,p)$, just by evaluating at $(m,n) = (p,p)$.
\end{remark}
\subsection{Some number theory}
\label{NumberTheory}
In order to show that the expression in \eqref{thesum2} is bounded for $\alpha>1$, we need to observe some facts from number theory. As was mentioned in the introduction, we want to estimate the number of lattice points lying in the intersection of a ball with a plane which is normal to a vector in $\mathbb{Z}^3$. The first result which we prove is the following property about sets whose points are sufficiently separated:
\begin{lemma}
Let $X \subset \mathbb{R}^d$ be any set, and let $X_{s}$ be the set of points in $\mathbb{R}^d$ which are a distance less than $s$ to the set $X$. Let $\Lambda \subset \mathbb{R}^d$ be any $r$-separated set (meaning that $|x-x'| \geq r$ for $x,x'$ distinct points in $\Lambda$). Then for any $r' > 0$, \label{seplemma}
\[ \# ( \Lambda \cap X_{r'}) \lesssim (\min \{r,r'\} )^{-d} |X_{r'}|, \]
i.e., the number of points in $\Lambda$ at distance less than $r'$ to $X$ is at most a universal constant (depending only on dimension) times $(\min \{r,r'\} )^{-d}$ and the Lebesgue measure of $X_{r'}$.
\end{lemma}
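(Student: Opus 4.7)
The plan is a volume-packing argument, but with a small twist: rather than centering a disjoint family of small balls at the points $\lambda$ themselves, I would center them at nearby points of $X$ so that the balls automatically lie inside $X_{r'}$. Concretely, for each $\lambda \in \Lambda \cap X_{r'}$, invoke the definition of $X_{r'}$ to pick a witness $x_\lambda \in X$ with $|\lambda - x_\lambda| < r'$, and consider the open ball $B_\lambda := B(x_\lambda, r')$. By construction $B_\lambda \subseteq X_{r'}$, contains $\lambda$, and has Lebesgue measure $c_d (r')^d$.

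The next step is to control the multiplicity of the family $\{B_\lambda\}$. If $y \in B_\lambda$, then $|y - \lambda| \leq |y - x_\lambda| + |x_\lambda - \lambda| < 2r'$, so the set of indices $\lambda$ with $y \in B_\lambda$ is contained in $\Lambda \cap B(y, 2r')$. A standard packing estimate—the open balls of radius $r/2$ around any collection of $r$-separated points are disjoint and fit inside an $r/2$-enlargement of $B(y,2r')$—gives $\#(\Lambda \cap B(y,2r')) \lesssim_d (1 + r'/r)^d$. Integrating this pointwise multiplicity bound against the indicator of $\bigcup_\lambda B_\lambda \subseteq X_{r'}$ yields
\[
\#(\Lambda \cap X_{r'}) \cdot c_d (r')^d \;=\; \sum_\lambda |B_\lambda| \;\lesssim_d\; (1 + r'/r)^d \, |X_{r'}|.
\]
Dividing through by $c_d (r')^d$, this collapses to
\[
\#(\Lambda \cap X_{r'}) \;\lesssim_d\; \bigl((r')^{-d} + r^{-d}\bigr)\,|X_{r'}| \;\leq\; 2 \min(r,r')^{-d}\,|X_{r'}|,
\]
which is the claim.

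There is no serious obstacle; the only delicate point is that the $(1+r'/r)^d$ factor in the multiplicity bound interpolates correctly between the two regimes. When $r \leq r'$, the $(r'/r)^d$ term dominates and contributes the $r^{-d}$ factor; when $r > r'$, the multiplicity is bounded by an absolute constant and the naive $(r')^{-d}$ factor takes over. The elementary inequality $a^{-d}+b^{-d} \leq 2 \min(a,b)^{-d}$ packages both cases into the single estimate $\min(r,r')^{-d}|X_{r'}|$ stated in the lemma.
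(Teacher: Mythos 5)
Your argument is correct, and it takes a recognizably different route to the same volume-packing conclusion. The paper constructs, for each $y \in \Lambda$ near $X$, a \emph{small} ball of radius $\frac{1}{4}\min\{r,r'\}$ whose center is a carefully chosen convex combination $\theta x + (1-\theta) y$ of $y$ and its witness $x \in X$; the choice $\theta = \min\{\frac{r}{4r'},1\}$ guarantees simultaneously that the ball sits inside $X_{r'}$ and inside $B(y,r/2)$, so the $r$-separation of $\Lambda$ forces these balls to be pairwise \emph{disjoint}, and the count falls out by a single volume comparison. You instead place a \emph{large} ball $B(x_\lambda,r')$ around the witness; these lie in $X_{r'}$ automatically but overlap, so you replace disjointness with a bounded-overlap packing estimate $\#(\Lambda \cap B(y,2r')) \lesssim_d (1+r'/r)^d$ and integrate the multiplicity function over $X_{r'}$. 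The two regimes ($r \leq r'$ vs.\ $r > r'$), which the paper handles by the interpolation parameter $\theta$, appear in your argument inside that multiplicity bound, and the elementary estimate $(1/r'+1/r)^d \lesssim_d \min(r,r')^{-d}$ repackages them into the stated form. Your approach trades the geometric care in placing the interpolated center for a second (standard) Vitali-type packing count; the paper's is more self-contained but requires verifying two triangle-inequality estimates pinning down the ball's location. Both produce the same exponent with only the dimension-dependent absolute constant differing.
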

\begin{proof}
Suppose $x \in X$ and $|x-y| < r'$. Let $\theta := \min \{ \frac{r}{4 r'}, 1\}$. By the triangle inequality, every point in the ball $B := \{z \in \mathbb{R}^d, \left|z - ( \theta x + (1-\theta) y ) \right| < \frac{1}{4} \min \{r,r'\}\}$
will have
\begin{align*}
|z - y| & \leq \left|z - (\theta x + (1-\theta) y) \right| + \theta |x-y| < \frac{r}{4} + \frac{r}{4} = \frac{r}{2}, \\
|z - x| & \leq \left|z - (\theta x + (1-\theta) y) \right| + (1 - \theta) |x-y| \\ & < \frac{1}{4} \min \{r,r'\} + \max \left\{ r' - \frac{r}{4}, 0 \right\} \leq r'.
\end{align*}
Consequently, for each $y \in \Lambda$ of distance less than $r'$ to the set $X$, we may find a ball $B_y$ of radius $\frac{1}{4} \min \{r,r'\}$ which is contained both in the ball of radius $\frac{r}{2}$ centered at $y$ and in $X_{r'}$. Since $\Lambda$ is $r$-separated, these balls must be disjoint. Thus we have the trivial estimate that the measure of $X_{r'}$ must be bounded below by the number of points in $\Lambda$ of distance less than $r'$ to the set $X$ times the volume of the ball of radius $\frac{1}{4} \min\{r,r'\}$. The constant clearly depends only on the dimension.
\end{proof}
Given $m \in \mathbb{Z}^d$, let $m_*$ equal $m$ divided by the greatest common divisor of its coordinates (and define $m_*:= 0$ when $m=0$).
The following lemma gives us a useful way of relating the above defined quantity $m_*$ to the diameter of a set:
\begin{lemma}
\label{sum1mstar}
Let $\mathcal{E} \subset \mathbb{Z}^3 \setminus \{0\}$ be any set of diameter $R \geq 1$.
Then
\begin{equation} \sum_{m \in {\mathcal E}} \frac{1}{|m_*|} \lesssim R^2 \label{numth1} \end{equation}
for some universal implied constant.
\end{lemma}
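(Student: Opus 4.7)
The plan is to parametrize each $m \in \mathcal{E}$ by its primitive direction. Writing $m = c(m) \cdot m_*$, where $c(m) = \gcd$(coords of $m$)$\geq 1$ and $m_*$ is primitive, I would let $V := \{m_* : m \in \mathcal{E}\}$ and, for each $v \in V$, let $C(v) := \{c \in \mathbb{Z}_{\geq 1} : cv \in \mathcal{E}\}$. The sum then becomes
\[
\sum_{m \in \mathcal{E}} \frac{1}{|m_*|} = \sum_{v \in V} \frac{|C(v)|}{|v|}.
\]
The key observation is a \emph{diameter bound} on $C(v)$: if $c, c' \in C(v)$ then $cv$ and $c'v$ both lie in $\mathcal{E}$, so $|c - c'|\, |v| \leq R$. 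In particular $|C(v)| \leq R/|v| + 1$, and moreover $|C(v)| \leq 1$ whenever $|v| > R$.

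Next, I would split $V = V_1 \sqcup V_2$ according to whether $|v| \leq R$ or $|v| > R$. For the short-vector contribution,
\[
\sum_{v \in V_1} \frac{|C(v)|}{|v|} \lesssim \sum_{0 < |v| \leq R} \Bigl( \frac{R}{|v|^2} + \frac{1}{|v|} \Bigr) \lesssim R^2,
\]
where the bound on the lattice sums follows from the standard shell count $\#\{v \in \mathbb{Z}^3 : r \leq |v| < r+1\} \lesssim r^2$, giving $\sum_{0<|v|\leq R} |v|^{-2} \lesssim R$ and $\sum_{0<|v|\leq R} |v|^{-1} \lesssim R^2$.

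For the long-vector contribution $V_2$, the map $v \mapsto 1 \cdot v$ (using that $|C(v)| \leq 1$) embeds $V_2$ into $\mathcal{E}$ via the unique $cv \in \mathcal{E}$. Then
\[
\sum_{v \in V_2} \frac{1}{|v|} < \frac{1}{R} \cdot |V_2| \leq \frac{|\mathcal{E}|}{R} \lesssim \frac{R^3}{R} = R^2,
\]
since $\mathcal{E}$ is contained in a ball of radius $R$ in $\mathbb{Z}^3$ and hence has at most $O(R^3)$ points. Combining the two contributions yields $\sum_{m \in \mathcal{E}} |m_*|^{-1} \lesssim R^2$, as desired. There is no serious obstacle here; the only point requiring any care is keeping the accounting clean when translating between the sum over $m$ and the sum over primitive directions $v$, together with the (elementary) lattice shell estimate.
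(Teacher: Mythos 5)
Your proof is correct and, despite the different bookkeeping, is essentially the same argument as the paper's: both bound the multiplicity of a primitive direction $v$ by $1 + R/|v|$ (your diameter bound on $C(v)$ is identical to the paper's $\varphi(q) \leq 1 + R|q|^{-1}$), both control the contribution from small $|v|$ by lattice shell counting, and both control the contribution from large $|v|$ by the crude cardinality bound $|\mathcal{E}| \lesssim R^3$, splitting at the threshold $|v| \sim R$. The only presentational difference is that the paper organizes the small-$|v|$ sum dyadically and optimizes over the cutoff $2^N$, whereas you take the cutoff at $|v| = R$ directly and sum over unit shells, which is slightly cleaner; you also make the injection $V_2 \hookrightarrow \mathcal{E}$ explicit. (One typographical slip: the map for $V_2$ should read $v \mapsto c(v)\,v$, not $v \mapsto 1\cdot v$, but the intent is clear.)
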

\begin{proof}
For each $q \in \mathbb{Z}^3$, let $\varphi(q)$ equal the number of points $m \in {\mathcal E}$ such that $m_* = q$. We have the equality
\[ \sum_{q \in \mathbb{Z}^3} \frac{\varphi(q)}{|q|} = \sum_{m \in {\mathcal E}} \frac{1}{|m_*|}. \]
Several observations are in order. The first is that the sum over all $q$ of $\varphi(q)$ is exactly the cardinality of ${\mathcal E}$. Because ${\mathcal E}$ has diameter $R$, it will be contained in some ball of radius $R$, and since the integer lattice is $1$-separated, the number of points in ${\mathcal E}$ will (by Lemma \ref{seplemma}) be at most a constant times the volume of the ball of radius $(R+1)$, which is (by assumption on $R$) itself bounded by a constant times $R^3$. Next we make the observation that
\begin{equation}
\label{eq:varphibound}
\varphi(q) \leq 1 + R |q|^{-1}.
\end{equation}
This follows since $\varphi(q) = N > 1$ implies that there are positive integers $n_1$ and $n_2$ with $|n_1 - n_2| \geq N-1$ such that $n_1 q \in \mathcal{E}$ and $n_2 q \in \mathcal{E}$. On the other hand, by assumption, points in $\mathcal{E}$ are separated by distance at most $R$). Hence,
\begin{align*}
|q| \cdot (N-1) \leq |q| \cdot |n_1-n_2| = |qn_1-qn_2| \leq R,
\end{align*}
which indeed implies $(\ref{eq:varphibound})$.
We conclude by estimating:
\begin{align*}
\mathop{\sum_{q \in \mathbb{Z}^3}}_{2^{j-1} < |q| < 2^j} \frac{\varphi(q)}{|q|} \leq 2^{-j+1} ( 1 + R 2^{-j+1}) 2^{3j}
\end{align*}
when $j \geq 0$ (we have simply bounded $|q|^{-1}$, $\varphi(q)$ and the number of $q$ in the annulus. Fixing any $N \in \mathbb{N}$, we may sum these estimates over $0 \leq j \leq N$. For the remaining piece, we know that $|q|^{-1} < 2^{-N}$ and that the sum over $\varphi$ on the remainder term is at most a constant times $R^3$. Thus
\[ \sum_{m \in {\mathcal E}} \frac{1}{|m_*|} \lesssim 2^{2 N} + R 2^{N} + 2^{-N} R^3. \]
Since $R \geq 1$, we can find $N \in \mathbb{N}$ such that $2^N \sim R$. The estimate \eqref{numth1} follows for this choice of $N$.
\end{proof}
Let us recall the definition of a lattice and of its determinant \cite{Lekkerkerker}
\begin{definition}
A $k$-dimensional lattice in $\mathbb{R}^d$ is a discrete subgroup $L$ of $\mathbb{R}^d$ of rank $k$. Given such an $L$, we define its determinant to be the smallest positive volume of a $k$-parallelepiped whose vertices belong to $L$.
\end{definition}
A fundamental number theoretic fact is:
\begin{lemma}
For fixed $m \in \mathbb{Z}^3 \setminus \{0\}$, the determinant of the lattice $$\Lambda_m := \{x \in \mathbb{Z}^3, \langle m,x \rangle = 0 \}$$ is $|m_*|$ (i.e., the Euclidean norm of the vector $m$ divided by the greatest common divisor of the entries of $m$).
\end{lemma}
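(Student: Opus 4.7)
The plan is to reduce to the case where $m$ has coprime coordinates, and then exhibit an explicit basis of $\mathbb{Z}^3$ containing a basis of $\Lambda_m$, so that the determinant of $\Lambda_m$ can be read off from a $3 \times 3$ determinant that must equal $\pm 1$.

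First I would observe that $\Lambda_m = \Lambda_{m_*}$, since scaling $m$ by the nonzero integer $\gcd$ of its coordinates does not change the orthogonality condition $\langle m,x\rangle=0$. So without loss of generality we may assume $m = m_*$, i.e., the coordinates of $m$ are coprime. By B\'ezout's identity (in three variables), coprimality produces a vector $v \in \mathbb{Z}^3$ with $\langle m, v \rangle = 1$.

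Next I would show that any $\mathbb{Z}$-basis $\{u_1,u_2\}$ of $\Lambda_{m}$ extends to a $\mathbb{Z}$-basis $\{u_1,u_2,v\}$ of $\mathbb{Z}^3$. Indeed, for any $w \in \mathbb{Z}^3$, the integer $k := \langle m, w\rangle$ satisfies $\langle m, w - kv\rangle = 0$, so $w - kv \in \Lambda_m$, and hence $w = au_1 + bu_2 + kv$ for some $a,b \in \mathbb{Z}$. Linear independence over $\mathbb{R}$ is automatic since $u_1,u_2$ span the plane $P = m^{\perp}$ while $v \notin P$. Consequently $\lvert \det[u_1 \mid u_2 \mid v]\rvert = 1$.

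Now I would compute the determinant via the triple product:
\[
\det[u_1 \mid u_2 \mid v] \;=\; \langle u_1 \times u_2,\, v\rangle.
\]
Because $u_1,u_2 \in P = m^{\perp}$, the cross product $u_1\times u_2$ is parallel to $m$; write $u_1 \times u_2 = \mu\, m$ for some $\mu \in \mathbb{R}$. Substituting and using $\langle m,v\rangle = 1$ gives $\det[u_1\mid u_2 \mid v] = \mu$, so $\lvert \mu\rvert = 1$. Finally, the determinant of the 2-dimensional lattice $\Lambda_m$ is the area of a fundamental parallelogram, which equals $\lvert u_1 \times u_2\rvert = \lvert \mu\rvert \,\lvert m\rvert = \lvert m_*\rvert$, as claimed.

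The only point that requires a little care is verifying that the cross-product area $\lvert u_1\times u_2\rvert$ for a basis really coincides with the definition of the lattice determinant given in the paper (smallest positive volume of a spanning parallelepiped); this follows since any two bases of $\Lambda_m$ differ by a unimodular transformation, so the area is a lattice invariant, and minimality then forces agreement with the stated definition. No other step presents a real obstacle.
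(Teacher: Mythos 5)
Your proof is correct, and it takes a genuinely different route from the paper. The paper simply invokes McMullen's theorem (\cite{mcmullen1984}), which says that for orthogonal rational subspaces $E$ and $E^\perp$ of $\mathbb{R}^d$, the lattices $\mathbb{Z}^d\cap E$ and $\mathbb{Z}^d\cap E^\perp$ have equal determinants; applying this with $E=m^\perp$ reduces the claim to noting that $\mathbb{Z}^3\cap \mathrm{span}(m)$ is $\mathbb{Z}\,m_*$, whose determinant is $|m_*|$. You instead give a self-contained, elementary argument: after normalizing $m=m_*$, you use three-variable B\'ezout to produce $v\in\mathbb{Z}^3$ with $\langle m,v\rangle=1$, show that any $\mathbb{Z}$-basis $\{u_1,u_2\}$ of $\Lambda_m$ extends by $v$ to a unimodular basis of $\mathbb{Z}^3$, and then read off $|u_1\times u_2|=|m|$ from the triple-product identity combined with the fact that $u_1\times u_2$ is parallel to $m$. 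This amounts to proving the codimension-one case of McMullen's theorem directly. The trade-off: your argument removes the external dependency and makes the geometric mechanism transparent, at the cost of relying on the cross-product (so it is genuinely three-dimensional, or at least codimension-one, whereas McMullen's theorem handles arbitrary rational subspaces). You were also right to flag the matching of your ``area of a fundamental parallelogram'' computation with the paper's ``smallest positive volume of a spanning parallelepiped'' definition; that reconciliation is routine but worth the sentence you gave it.
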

\begin{proof}
This lemma is an immediate corollary of a theorem of P. McMullen \cite{mcmullen1984}. McMullen's theorem states that if $E$ and $E'$ are orthogonal rational subspaces of Euclidean space, then the determinants of the lattices $\mathbb{Z}^d \cap E$ and $\mathbb{Z}^d \cap E'$ are equal.
This theorem was later generalized by Schnell \cite{schnell1992}. In our case, $\Lambda_{m}$ has a dual lattice $\Lambda_{m}'$ which consists of all integer points which are multiples of $m$ itself. This lattice is one-dimensional, and its determinant is merely equal to the Euclidean norm of the smallest nonzero element, which must equal $|m_*|$.
\end{proof}
The concept of the determinant of a lattice allows us to prove the main counting lemma of this subsection. This is the result which will allow us to estimate the number of lattice points in the intersection of a ball and a plane. We note that it gives us a substantial improvement over the a priori bound $O(R^2)$, which we would obtain for generic values of the normal vector.
\begin{lemma}
\label{MainCountingLemma}
For fixed $m \in \mathbb{Z}^3 \setminus \{0\}$ and $c \in \mathbb{Z}$, if ${\mathcal E} \subset \mathbb{Z}^3$ is any set of diameter $R \geq 1$ then
\begin{equation} \# \left( \mathcal{E} \cap \{ x \in \mathbb{Z}^3, \langle m,x \rangle = c\} \right) \lesssim R + \frac{R^2}{|m_*|}. \label{countingineq}
\end{equation}
\end{lemma}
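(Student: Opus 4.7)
The plan is to reduce the count to a two-dimensional lattice point counting problem in the plane $\Pi_m := \{x \in \mathbb{R}^3 : \langle m, x\rangle = 0\}$ and then apply Gauss--Lagrange reduction together with the identification of the determinant of $\Lambda_m$ from the preceding lemma. If the affine set $\{x \in \mathbb{Z}^3 : \langle m, x\rangle = c\}$ contains no integer points, the count is zero. Otherwise fix one solution $x_0$ and translate $\mathcal{E}$ by $-x_0$; the problem becomes that of bounding $\#(\mathcal{E}' \cap \Lambda_m)$, where $\mathcal{E}'$ still has diameter $R$ and, by the previous lemma, $\Lambda_m$ is a $2$-dimensional lattice contained in $\Pi_m$ with determinant equal to $|m_*|$.

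I would then pick a Gauss-reduced basis $v_1, v_2$ of $\Lambda_m$ with $|v_1| \leq |v_2|$ and with the projection of $v_2$ onto $\mathbb{R} v_1$ of length at most $|v_1|/2$. Writing $v_2 = \beta v_1 + h\hat{n}$ with $|\beta| \leq 1/2$ and $\hat{n}$ a unit vector in $\Pi_m$ orthogonal to $v_1$, the area identity gives $h = |m_*|/|v_1|$, while Minkowski's theorem in two dimensions yields $|v_1| \lesssim \sqrt{|m_*|}$. Any element of $\mathcal{E}' \cap \Lambda_m$ has the form $a v_1 + b v_2$ for some $a,b \in \mathbb{Z}$, and its coordinates in the orthonormal frame $(v_1/|v_1|, \hat{n})$ on $\Pi_m$ are $\bigl((a+\beta b)|v_1|,\; bh\bigr)$. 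Since the projection of $\mathcal{E}'$ onto $\Pi_m$ has diameter at most $R$, we have $|bh| \leq R$ and $|a+\beta b|\,|v_1| \leq R$.

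Hence $b$ ranges over at most $2R/h + 1 = 2R|v_1|/|m_*| + 1$ values, and for each such $b$ the integer $a$ ranges over at most $2R/|v_1| + 1$ values. Multiplying,
\[ \#(\mathcal{E}' \cap \Lambda_m) \;\lesssim\; \Bigl(\frac{R|v_1|}{|m_*|}+1\Bigr)\Bigl(\frac{R}{|v_1|}+1\Bigr) \;\lesssim\; \frac{R^2}{|m_*|} + \frac{R|v_1|}{|m_*|} + \frac{R}{|v_1|} + 1. \]
Since $v_1$ is a nonzero integer vector, $|v_1| \geq 1$, and hence $R/|v_1| \leq R$. Combining $|m_*| \geq 1$ with $|v_1| \lesssim \sqrt{|m_*|}$ gives $|v_1|/|m_*| \lesssim |m_*|^{-1/2} \leq 1$, so $R|v_1|/|m_*| \lesssim R$. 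Since $R \geq 1$ absorbs the constant term, this produces the asserted bound $R + R^2/|m_*|$.

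The main obstacle is making the two-dimensional reduction quantitative enough that the cross terms are controlled. The key miracle is that the Minkowski factor $\sqrt{|m_*|}$ bounding $|v_1|$ from above is exactly balanced by the factor $1/|m_*|$ appearing through the area identity $h|v_1| = |m_*|$, while the integrality lower bound $|v_1| \geq 1$ on the shortest nonzero vector handles the remaining cross term. Without the sharp value $|m_*|$ for the determinant supplied by the McMullen/Schnell result, this delicate cancellation would not line up.
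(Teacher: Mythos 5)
Your argument is correct, and the overall structure matches the paper's: translate a solution of the affine equation to the origin so the problem becomes counting $\mathcal{E}' \cap \Lambda_m$, then parametrize the rank-two lattice and count coordinatewise, arriving (with $v_1$ a shortest nonzero vector of $\Lambda_m$) at the product bound
\[
\#(\mathcal{E}' \cap \Lambda_m) \lesssim \Bigl(\frac{R}{|v_1|}+1\Bigr)\Bigl(\frac{R|v_1|}{|m_*|}+1\Bigr).
\]
The paper phrases this as a decomposition of $\Lambda_m$ into parallel lines in the $v_1$-direction spaced $|m_*|/|v_1|$ apart, each carrying points $|v_1|$ apart, rather than in reduced-basis coordinates, but the count is identical. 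The genuine difference lies in how the cross term $R|v_1|/|m_*|$ is absorbed. You invoke Minkowski's theorem (equivalently, $|v_1|^2 \lesssim |m_*|$ for a Gauss-reduced basis) to deduce $R|v_1|/|m_*| \lesssim R/\sqrt{|m_*|} \leq R$. The paper instead observes that if $\mathcal{E}' \cap \Lambda_m$ contains two distinct points, their difference is a nonzero element of $\Lambda_m$ of length at most $R$, so by minimality $|v_1| \leq R$ and the cross term is at most $R^2/|m_*|$; if there is at most one point the bound is trivial. Your route is unconditional and yields the sharper intermediate bound $R/\sqrt{|m_*|}$ for the cross term; the paper's is slightly more self-contained, using only minimality of $|v_1|$ together with the diameter constraint. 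Both land inside the stated right-hand side because either $R$ or $R^2/|m_*|$ can absorb the cross term.
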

\begin{proof}
Translating ${\mathcal E}$ as necessary, it suffices to assume $c=0$ (since if $q$ is any point belonging to the intersection, then the cardinality of the intersection will equal the cardinality of the intersection $({\mathcal E} - q) \cap \Lambda_m$.
Let $a \in \Lambda_m$ be any nonzero element of minimal norm. On any line in $\Lambda_m$ with direction $a_1:=\frac{a}{|a|^2}$, the points of $\Lambda_m$ will be separated by a distance of at least $|a|$ (if this were not the case, a translation of that line back to the origin would show that $|a|$ cannot be minimal). On the other hand, since the determinant of the lattice is $|m_*|$, this means that $\Lambda_m$ may be written as a countable union of equally spaced lines in the $a_1$-direction which are separated by a distance of at least $|m_*| |a|^{-1}$ (since any parallelogram will have area at least $|m_*|$). Now it's clear, however, that if ${\mathcal E}$ has diameter $R$, then at most $1 + |a| |m_*|^{-1} R$ of these lines may intersect ${\mathcal E}$, and the number of points on each such line which belong to $\Lambda_m \cap {\mathcal E}$ can be at most $1 + |a|^{-1} R$. Thus
$$\# \left( \mathcal{E} \cap \{ x \in \mathbb{Z}^3, \langle m, x \rangle = c\} \right) \leq 1 + (|a|^{-1} + |a| |m_*|^{-1}) R + |m_*|^{-1} R^2. $$
Because $|a| \geq 1$ (due to the $1$-separation of $\mathbb{Z}^3$), $|a| \leq R$ (since $\mathcal{E}$ has diameter $R$), and $R \geq 1$, the conclusion \eqref{countingineq} immediately follows.
\end{proof}
\subsection{The proof of Proposition \ref{Spacetime Bound}}
\label{ConclusionProofProposition}
We can now put together all of the previous estimates and estimate the sum in Proposition \ref{Spacetime Bound}. More precisely, let us prove the bound $(\ref{eq:Proposition1bound})$, which, in turn, implies Proposition \ref{Spacetime Bound} from the earlier discussion. In the proof, it will be necessary to dyadically localize all of the factors. We will then argue by cases and sum up all the estimates to obtain the bound.
\begin{proof} (of Proposition \ref{Spacetime Bound})
For any $j := (j_1,j_2,j_3) \in \mathbb{N}_0^3$, let $E_{\tau,p}(j)$ be the subset of $\mathbb{Z}^3 \setminus \{0\} \times \mathbb{Z}^3 \setminus \{0\}$ on which
\[ |m-p| < 2^{j_1}, |n-p| < 2^{j_2}, \ |m+n-p| < 2^{j_3}, \]
and $0 = \tau + |p|^2 - 2 \langle n, m \rangle$. If $j_1 > 0$, we further assume that $|m-p| \geq 2^{j_1-1}$ and likewise for $j_2$ (assume $|n-p| \geq 2^{j_2-1}$) and $j_3$ (assume $|m+n-p| \geq 2^{j_3-1}$).
Each pair $(m,n) \in \mathbb{Z}^3 \setminus \{0\} \times \mathbb{Z}^3 \setminus \{0\}$ satisfying the relation $0 = \tau + |p|^2 - 2 \langle n, m \rangle$ will belong to $E_{\tau,p}(j)$ for exactly one index $j$; for that specific index we will have
\[ \frac{1}{\ang{m-p}^{2 \alpha} \ang{n-p}^{2 \alpha} \ang{m+n-p}^{2 \alpha}} \leq 2^{-2\alpha (j_1 + j_2 + j_3 - 3)}. \]
The above inequality is immediate when $j_1,j_2,j_3>0$. If any of the indices are non-positive, we use the fact that $\ang{\cdot} \geq 1$.
Now consider the sum $I(\tau,p)$. From \eqref{thesum2}, we note that:
\begin{equation}
\begin{split}
I(\tau,p) \lesssim \delta( \tau + |p|^2) & \sum_{m} \frac{1}{ \ang{m-p}^{4 \alpha} } \\
& + \ang{p}^{2 \alpha} \sum_{j_1,j_2,j_3 = 0}^{\infty} 2^{- 2\alpha (j_1 + j_2 + j_3)} \# E_{\tau,p}(j).
\end{split} \label{mainest}
\end{equation}
The first sum on the right-hand side arises from those terms in the sum for $I(\tau,p)$ at which either $m = 0$ or $n = 0$ (note these cases are symmetric) and is finite whenever $\alpha>\frac{3}{4}$.
The second sum on the right-hand side of \eqref{mainest} is substantially more difficult to control. Let $j_{\min}, j_{\mid}$, and $j_{\max}$ equal the entries of $j$ arranged in increasing order. By the triangle inequality, $|p| \leq |m-p| + |n-p| + |m+n-p| \leq 2^{j_1} + 2^{j_2} + 2^{j_3}$. Furthermore, since each of $j_1,j_2,j_3$ is nonnegative, we also have $1 \leq \frac{7}{9} (2^{j_1} + 2^{j_2} + 2^{j_3})^2$, and so
\begin{align*}
1+|p|^2 \leq 1 + (2^{j_1}+2^{j_2}+2^{j_3})^2 \leq \frac{16}{9}(2^{j_1}+2^{j_2}+2^{j_3})^2 \leq 16 \cdot 2^{2 j_{\max}}.
\end{align*}
Hence, $\ang{p} \leq 2^{j_{\max}+2}$ whenever $E_{\tau,p}(j) \neq \emptyset$.
We can rewrite this inequality as:
\begin{equation}
\label{eq:jmaxlogp}
j_{\max} \geq \log_2 \ang{p} -2.
\end{equation}
If we can establish the inequality
\begin{equation} \# E_{\tau,p}(j) \lesssim 2^{2 j_{\min} + 2 j_{\mid}} \label{sizee} \end{equation}
then we will have
\begin{align*}
\ang{p}^{2 \alpha} & \sum_{j_1,j_2,j_3 = 0}^{\infty} 2^{- 2\alpha (j_1 + j_2 + j_3)} \# E_{\tau,p}(j) \\ & \lesssim \ang{p}^{2 \alpha} \mathop{\sum_{j_1,j_2,j_3 = 0}^\infty}_{j_{\max} \geq \log_2 \ang{p} - 2} 2^{- 2\alpha (j_{\min} + j_\mid + j_{\max} ) + 2 j_{\min} + 2 j_{\mid}}
\end{align*}
\begin{equation}
\label{alphale1}
\lesssim \ang{p}^{2 \alpha} \left( \sum_{j_{\max} \geq \log_2 \ang{p} - 2} 2^{- 2\alpha j_{\max}} \right) \left( \sum_{j=0}^\infty 2^{2 ( 1 - \alpha) j} \right)^2
\end{equation}
which is uniformly bounded in $\tau$ and $p$ so long as $\alpha > 1$. Here, we are using the fact that the sum of the tail of a convergent geometric series is comparable to the largest term with a constant depending only on the ratio of adjacent terms. This, in turn, implies that the sum over $j_{\max}$ will cancel the factor of $\ang{p}^{2 \alpha}$. To establish \eqref{sizee} we must consider several cases.

\vspace{5mm}

\textbf{Case 1:} $j_1 = \min \{j_1,j_2,j_3\}$.
For each fixed $m$, the pairs $(m,n) \in E_{\tau,p}(j)$ satisfy the inequalities $|m-p| \leq 2^{j_1}, |n-p| \leq 2^{j_2}, |m+n-p| \leq 2^{j_3}$. In particular, for $m$ fixed, since $p$ is also fixed, the $n$ coordinates must lie in a ball of radius $2^{\min \{j_2,j_3\}}$. We use Lemma \ref{MainCountingLemma} in order to deduce:
\begin{align*}
\# E_{\tau,p}(j) & \leq \mathop{\sum_{|m-p| \leq 2^{j_1}}}_{m \neq 0} \mathop{\sum_{|n-p| \leq 2^{j_2}}}_{|n+m-p| \leq 2^{j_3}} \delta( \tau + |p|^2 - 2 \langle n, m \rangle) \\
& \lesssim \mathop{\sum_{|m-p| \leq 2^{j_1}}}_{m \neq 0} \left[ 2^{\min\{j_2,j_3\}} + \frac{2^{2 \min \{j_2,j_3\}}}{|m_*|} \right] \end{align*}
Furthermore, by Lemma \ref{sum1mstar} it follows that:
\[ \# E_{\tau,p}(j) \lesssim 2^{3 j_1 + \min \{j_2,j_3\}} + 2^{2 j_1 + 2 \min \{j_2,j_3\}} \lesssim 2^{2 j_1 + 2 \min \{ j_2,j_3\}}. \]

\vspace{5mm}

\textbf{Case 2:} $j_2 = \min \{j_1,j_2,j_3\}$.
By symmetry between $n$ and $m$, we get
\[ \# E_{\tau,p}(j) \lesssim 2^{2 j_2 + 2 \min \{j_1,j_3\}} \]
in exactly the same manner just described.

\vspace{5mm}

\textbf{Case 3:} $j_3 = \min \{j_1,j_2,j_3\}$. Without loss of generality, we may assume $j_1$ and $j_2$ are both positive (otherwise one of the previous cases would also hold). This means we may assume $|m-p| \geq 2^{j_1 - 1}$ and $|n-p| \geq 2^{j_2 - 1}$ if necessary. For any $k := (k_1,k_2,k_3) \in \mathbb{Z}^3$, let
$B_k := [2^{j_3} k_1, 2^{j_3} k_1 + 2^{j_3}-1] \times [2^{j_3} k_2, 2^{j_3} k_2 + 2^{j_3}-1] \times [ 2^{j_3} k_3 + 2^{j_3} - 1]$.
For any pair $(k,k') \in \mathbb{Z}^3 \times \mathbb{Z}^3$, we have $E_{\tau,p}(j) \cap (B_k \times B_{k'})$ has diameter at most comparable to $2^{j_3}$, and so just as in the previous cases, we will have an estimate
\[ \# ( E_{\tau,p}(j) \cap (B_k \times B_{k'})) \lesssim 2^{4 j_3}. \]
Namely, for $m$ fixed, the $n$ coordinates lie in a ball of radius $\sim 2^{j_3}$. Moreover, the $m$ coordinates lie in a ball of radius $\sim 2^{j_3}$.
In order to establish the estimate
\[ \# E_{\tau,p}(j) \lesssim 2^{2 j_3 + 2 \min \{j_1,j_2\}}, \]
it suffices to show that the number of pairs $(k,k')$ for which $E_{\tau,p}(j) \cap (B_k \times B_{k'})$ is nonempty can be at most a uniform constant times $2^{2 \min\{j_1,j_2\} - 2 j_3}$. To establish this estimate we appeal to Lemma \ref{seplemma}.
For any pair $(k,k') \in \mathbb{Z}^3 \times \mathbb{Z}^3$, we will have $(B_k \times B_{k'}) \cap E_{\tau,p}(j) \neq \emptyset$ only when the center point of $B_{k} \times B_{k'}$ is at Euclidean distance less than $2^{j_3+1}$ from the set $E_{\tau,p}(j)$. Furthermore, the centers of such boxes form a $2^{j_3}$-separated set. We conclude by Lemma \ref{seplemma} that
$$\# \{ (k,k') \in \mathbb{Z}^3 \times \mathbb{Z}^3, E_{\tau,p}(j) \cap ( B_k \times B_{k'}) \neq \emptyset\} \lesssim 2^{-6 j_3} | X_{2^{j_3+1}}|,$$
where $X_{2^{j_3+1}}$ refers to the $2^{j_3+1}$-neighborhood of $E_{\tau,p}(j)$. Here, we note that the dimension $d=6$.
It follows that we need only show that the measure of this neighborhood of $E_{\tau,p}(j)$ is at most a uniform constant times $2^{2 \min \{j_1,j_2\} + 4 j_3}$. It will be convenient to rewrite the relation $\tau + |p|^2 - 2 \langle n, m \rangle = 0$ as
\begin{equation}
\label{relation}
2 \tau + |p|^2 + |n-m|^2 = \langle m+n-p,m+n+p \rangle
\end{equation}
and split into two subcases.

\vspace{5mm}

\textbf{Case 3(a):} $|n-m| \geq 2^{\min \{j_1,j_2\}+4}$ for every $(m,n) \in E_{\tau,p}(j)$.
By virtue of the inequalities
\begin{equation}
\label{dyadiclocalizationmnp}
2 \min \{ |m-p|, |n-p| \} \geq 2^{\min\{j_1,j_2\}} \geq 2^{j_3} \geq |m+n-p|
\end{equation}
and
\begin{equation}
\label{triangleineq}
|m-n| + \min \{ |m-p|, |n-p| \} \geq \max \{|m-p|,|n-p|\},
\end{equation}
we must have that:
\begin{equation} 2|n-m| + 10 \min \{ |m-p|, |n-p|\} \geq |m+n+p|, \label{dichot} \end{equation}
for every pair $(m,n) \in E_{\tau,p}(j)$.
In order to prove \eqref{dichot}, we can argue by symmetry and assume, without loss of generality, that $|m-p| \geq |n-p|$. Hence, it follows that:
$$2|m-p| \geq 2|n-p| \geq |m+n-p|.$$
In order to obtain the last inequality, we used \eqref{dyadiclocalizationmnp}.
We also note:
$$|m-p|+|n-p|+|m+n-p| \geq |p|.$$
Furthermore, we use \eqref{triangleineq} to deduce:
\begin{eqnarray*}
&&|n-m|+4|n-p|=|n-m|+|n-p|+|n-p|+2|n-p| \geq \\
&&|m-p|+|n-p|+|m+n-p| \geq |p|.
\end{eqnarray*}
Consequently,
\begin{align*}
2|n-m|+10|n-p| \geq 2|p| + 2|n-p| \geq 2|p|+|m+n-p| \geq |m+n+p|,
\end{align*}
which proves \eqref{dichot}. Here, we again applied \eqref{dyadiclocalizationmnp}.
From \eqref{dichot}, we can deduce that $|n-m| \geq \frac{|m+n+p|}{4}$.
Namely, we know that $\min\{|m-p|,|n-p|\} \leq 2^{\min\{j_1,j_2\}}\leq \frac{1}{16}|m-n|$ and so:
$$2|n-m|+\frac{5}{8}|n-m|=\frac{21}{8} |n-m| \geq |m+n+p|$$
which implies that $|n-m| \geq \frac{8|m+n+p|}{21} \geq \frac{|m+n+p|}{4}.$
Furthermore, by hypothesis $|n-m| \geq 2^{\min \{j_1,j_2\}+4}$. From these two bounds, combined with \eqref{relation}, the fact that $|m+n-p|\leq 2^{j_3} \leq 2^{\min \{j_1,j_2\}}$, and the Cauchy-Schwarz inequality, we may conclude that:
\begin{equation}
\label{Case3aestimate1}
2 \tau + |p|^2 \leq - |n-m|^2 + 2^{j_3} |m+n+p| \leq - 2^{2 \min \{j_1,j_2\} + 6}
\end{equation}
\begin{equation}
\label{Case3aestimate2}
\left| |m-n| - \left| 2 \tau + |p|^2 \right|^{\frac{1}{2}} \right| \leq 2^{j_3+2}.
\end{equation}
In order to deduce \eqref{Case3aestimate2}, we use observe that:
\begin{align*}
\left| |2\tau+p^2|^{\frac{1}{2}} - |n-m| \right| \cdot \left| |2\tau+p^2|^{\frac{1}{2}} + |n-m| \right|=
\left| |2\tau+p^2| - |n-m|^2 \right| \leq \left| 2\tau+p^2 + |n-m|^2 \right| \\
= \left| \langle m+n-p,m+n+p \rangle \right| \leq |m+n-p| |m+n+p| \leq 2^{j_3} \cdot 4|n-m|
\end{align*}
Here, we deduced the fact that
$\left| |2\tau+p^2| - |n-m|^2 \right| \leq \left| 2\tau+p^2 + |n-m|^2 \right|$
by the triangle inequality.
Since $\left| |2\tau+p^2|^{\frac{1}{2}} + |n-m| \right| \geq |n-m|$, the estimate \eqref{Case3aestimate2} follows.
Consequently, we have shown that when $(m,n) \in E_{\tau,p}(j)$, the triple $m-n$ must lie between two spheres of radius at least comparable to $2^{\min\{j_1,j_2\}}$ whose separation is at most comparable to $2^{j_3}$. The first fact follows from
\eqref{Case3aestimate1}, whereas the second fact follows from \eqref{Case3aestimate2}.
We note that all pairs $(m,n) \in E_{\tau,p}(j)$ have to satisfy $|m+n-p| \leq 2^{j_3}$ and $\min\{|m-n-p|,|m-n+p|\} \leq |m+n-p| + 2 \min \{ |m-p|,|n-p| \} \leq 2^{\min \{j_1,j_2\} + 2}$, since $j_3 \leq j_1,j_2$.
In particular, we know by the triangle inequality that
$|m-n-p| \leq |m+n-p| +2|m-p|$ and $|m-n+p| \leq |m+n-p| + 2|n-p|$.
This means that, aside from lying between two spheres, $m-n$ must also lie in some ball of radius comparable to $2^{\min \{j_1,j_2\}}$. Thus, the $2^{j_3+1}$-neighborhood of $E_{\tau,p}(j)$ will have Lebesgue measure in $\mathbb{R}^3 \times \mathbb{R}^3$ at most a uniform constant times $2^{2 \min \{j_1,j_2\} + 4 j_3}$.
We obtain this bound by first fixing $m+n$ which by assumption can lie in a set of diameter at most a uniform constant times $2^{j_3}$ (even after taking into account the $2^{j_3+1}$ neighborhood). Once we fix such an $m+n$, we then note that $m-n$ will lie in a set with thickness at most comparable to $2^{j_3}$ transverse to the spherical shells and diameter at most comparable to $2^{\min \{j_1,j_2\}}$ in the remaining two directions.

\vspace{5mm}

\textbf{Case 3(b):} There is some pair $(m,n) \in E_{\tau,p}(j)$ for which $|n-m| < 2^{\min \{j_1,j_2\} + 4}$.
Since \eqref{dichot} still holds, we must conclude that $2^{\min\{j_1,j_2\} + 6} \geq |m+n+p|$ in this case. Pairs $(m,n) \in E_{\tau,p}$ still have $|m+n-p| \leq 2^{j_3}$. By using these two bounds, \eqref{relation}, and Cauchy-Schwarz, the best that can be said for the annular region in $n-m$ is that
\[ \left| 2 \tau + |p|^2 + |n-m|^2 \right| \leq 2^{j_3 + \min \{j_1,j_2\} + 6}. \]
Evaluating at the specific pair $(m,n) \in E_{\tau,p}(j)$ for which $|n-m| < 2^{\min \{j_1,j_2\} + 4}$, we conclude
\[ |2 \tau + |p|^2| \leq 2^{2 \min \{j_1,j_2\} + 9}. \]
We note the calculus fact that, for fixed $A>0$ and $\delta>0$, the volume of the $\delta$-neighborhood of the annular region
\begin{equation}
\label{xAlambdaregion}
|\lambda + |x|^2| \leq A
\end{equation}
is a decreasing function of $\lambda$.
In order to prove this claim, we rewrite the annular region as:
\begin{align*}
-A-\lambda \leq |x|^2 \leq A-\lambda.
\end{align*}
We now argue by cases.
\\
\\
\textit{Case 1:} $-A-\lambda \leq 0$.
\\
The assumption can be rewritten as $\lambda \in [-A,+\infty)$.
In this case, the condition for the original set is equivalent to $|x|^2 \leq A-\lambda$, so the $\delta-$neigborhood is given by $|x| \leq \sqrt{A-\lambda}+\delta$, and it has volume $C (\sqrt{A-\lambda}+\delta)^3$, which is a decreasing function of $\lambda$.
\\
\\
\textit{Case 2:} $-A-\lambda>0$
\\
In this case, the condition for the original set is equivalent to $\sqrt{-A-\lambda} \leq |x| \leq \sqrt{A-\lambda}$, so the $\delta-$neighborhood is given by
\begin{align*}
\sqrt{-A-\lambda}-\delta_1 \leq |x| \leq \sqrt{A-\lambda}+\delta.
\end{align*}
where $\delta_1:=\min\{\delta,\sqrt{-A-\lambda}\}$.
We need to consider the two possibilities separately.
\\
\\
\textit{Case 2a:} $\delta_1=\sqrt{-A-\lambda}$.
\\
The assumption is equivalent to $\lambda \in [-A-\delta^2,-A)$.
The original set is then the same as in Case 1, so the volume is a decreasing function of
$\lambda$.
\\
\\
\textit{Case 2b:} $\delta_1=\delta$.
\\
Now, we note that $\lambda \in (-\infty, -A-\delta^2]$.
The $\delta-$neighborhood is then given by:
\begin{align*}
0<\sqrt{-A-\lambda}-\delta \leq |x| \leq \sqrt{A-\lambda}+\delta.
\end{align*}
whose volume equals:
\begin{align*}
C \cdot \left[ (\sqrt{A-\lambda}+\delta)^3-(\sqrt{-A-\lambda}-\delta)^3 \right].
\end{align*}
We want to show that the function:
$$f(\lambda):=(\sqrt{A-\lambda}+\delta)^3-(\sqrt{-A-\lambda}-\delta)^3$$
is decreasing on $(-\infty,-A-\delta^2]$.

We compute:
$$f'(\lambda)=3(\sqrt{A-\lambda}+\delta)^2\cdot \frac{-1}{2\sqrt{A-\lambda}}+3(\sqrt{-A-\lambda}-\delta)^2 \cdot \frac{1}{2 \sqrt{-A-\lambda}}.$$
We want to show that $f'(\lambda) \leq 0$ for $
\lambda \leq -A-\delta^2$.
This is equivalent to showing that, for such $\lambda$:
$$(\sqrt{-A-\lambda}-\delta)^2 \cdot \frac{1}{\sqrt{-A-\lambda}} \leq (\sqrt{A-\lambda}+\delta)^2 \cdot \frac{1}{\sqrt{A-\lambda}},$$
which is equivalent to:
$$(-A-\lambda-2\delta\sqrt{-A-\lambda}+\delta^2) \cdot \sqrt{A-\lambda} \leq (A-\lambda + 2\delta\sqrt{A-\lambda}+\delta^2) \cdot \sqrt{-A-\lambda}$$
Letting $r_1:=\sqrt{-A-\lambda}$ and $r_2:=\sqrt{A-\lambda}$, the above inequality can be rewritten as:
$$(r_1^2-2\delta r_1 +\delta^2) \cdot r_2 \leq (r_2^2+2\delta r_2 +\delta^2) \cdot r_1.$$
This is equivalent to:
\begin{equation}
\label{r1r2}
(r_2-r_1) \cdot (r_1r_2-\delta^2) +4\delta r_1 r_2 \geq 0.
\end{equation}
Since by assumption $\lambda \leq -A-\delta^2$, it follows that $r_1=\sqrt{-A-\lambda} \geq \delta$. We also know that $r_2=\sqrt{A-\lambda}>r_1$ since $A>0$. In particular, it follows that $r_1r_2>\delta^2$, and so \eqref{r1r2} indeed holds.

\vspace{5mm}

Hence, the volume of the $\delta-$neighborhood of the annular region \eqref{xAlambdaregion} is indeed a decreasing function of $\lambda$. Consequently, the volume of the $2^{j_3+1}$ neighborhood of $E_{\tau,p}(j)$ in $\mathbb{R}^3 \times \mathbb{R}^3$ will be bounded above by the volume of the $2^{j_3+1}$ neighborhood of the set where $|m+n-p| \leq 2^{j_3}$ and
\[ \left| |n-m|^2 - 2^{2 \min \{j_1,j_2\} + 9} \right| \leq 2^{j_3 + \min \{j_1,j_2\} + 6}. \]
We note that the above constraint implies:
\begin{align*}
2^{\min\{j_1,j_2\}+4.5}-2^{j_3+0.5} \leq |n-m| \leq 2^{\min \{j_1,j_2\}+4.5}+2^{j_3+0.5}.
\end{align*}
Hence, in order to compute the volume of the set of all possible $m-n$, we again consider the volume in $\mathbb{R}^3$ of a $2^{j_3+1}$-neighborhood of a sphere with radius comparable to $2^{\min\{j_1,j_2\}}$. Arguing as in Case 3a, we first fix $m+n$, which is allowed to vary in a set of diameter at most comparable to $2^{j_3}$ and we then consider the possible values for $m-n$. The end result is that the $2^{j_3+1}$-neighborhood of $E_{\tau,p}(j)$ still has volume at most comparable to $2^{4 j_3 + 2 \min \{j_1,j_2\}}$, which proves the claim.
\end{proof}
\subsection{Discussion when $\alpha=1$ and the proofs of Proposition \ref{Alpha1log} and Proposition \ref{Alpha1unbounded}}
\label{alpha1discussion}
We first prove Proposition \ref{Alpha1log} in order to obtain the logarithmic upper bound in the endpoint case $\alpha=1$.
\begin{proof}(of Proposition \ref{Alpha1log})
We recall the bound on the quantity $I(\tau,p)$ given by \eqref{mainest}, by which:
\begin{equation}
\notag
I(\tau,p) \lesssim \delta( \tau + |p|^2)\sum_{m} \frac{1}{ \ang{m-p}^{4} } + \ang{p}^{2} \sum_{j_1,j_2,j_3 = 0}^{\infty} 2^{- 2\alpha (j_1 + j_2 + j_3)} E_{\tau,p}(j)
\end{equation}
with notation as before.
The first term is uniformly bounded in $(\tau,p)$, whereas by using the frequency localization, and arguing as in \eqref{alphale1}, we can bound the second term by:
\begin{equation}
\notag
\langle p \rangle^2 \, \big(\sum_{j_{max} \geq \log_2 \langle p \rangle -2} 2^{-2j_{max}}\big) \big(\sum_{j=0}^{\log_2 N + 1} 1\big)^2 \lesssim (\ln N)^2.
\end{equation}
The claim now follows from the definition of $I(\tau,p)$.
\end{proof}
The proof of Proposition \ref{Alpha1unbounded} requires several preliminary steps.
Let us note that the our proof of the uniform boundedness of $I(\tau,p)$ crucially used the assumption that $\alpha>1$ in the estimate \eqref{alphale1}. We now demonstrate that this is not a feature of the method, but that the uniform boundedness indeed does not hold when $\alpha=1$. More precisely, we now prove:
\begin{lemma}
\label{alpha1unbounded1}
The expression $I(\tau,p)$ defined in $(\ref{eq:expressionI})$ is not uniformly bounded when $\alpha=1$.
\end{lemma}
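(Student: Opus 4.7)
The plan is to exhibit a one-parameter family $(\tau_N, p_N)$ for which $I(\tau_N, p_N)$ grows at least logarithmically in $N$. The strategy is to work with the rewritten form \eqref{thesum2} and choose $\tau, p$ so that the $\delta$-constraint becomes $\langle n, m \rangle = 0$, and then isolate a single slice $m = p$ in which the remaining sum in $n$ is a classical two-dimensional harmonic series that diverges logarithmically.

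Concretely, set $p := N e_1 = (N,0,0)$ for a large positive integer $N$, and set $\tau := -|p|^2 = -N^2$. With these choices the constraint $\tau + |p|^2 - 2\langle n, m\rangle = 0$ in \eqref{thesum2} collapses to $\langle n, m\rangle = 0$. Restricting the double sum to the single value $m = p$ and to those $n = (0, n_2, n_3) \in \mathbb{Z}^3$ with $(n_2,n_3) \neq (0,0)$ (which satisfy $\langle n,m\rangle = N n_1 = 0$ automatically), we obtain the lower bound
\begin{equation}
\notag
I(-N^2, N e_1) \;\geq\; \sum_{(n_2,n_3) \in \mathbb{Z}^2 \setminus \{0\}} \frac{\langle p \rangle^2}{\langle m-p\rangle^2 \, \langle n-p\rangle^2 \, \langle p-n-m\rangle^2}.
\end{equation}
With our choice, $\langle m-p\rangle^2 = 1$, $\langle n-p\rangle^2 = 1 + N^2 + n_2^2 + n_3^2$, $\langle p-n-m\rangle^2 = \langle -n\rangle^2 = 1 + n_2^2 + n_3^2$, and $\langle p\rangle^2 = 1 + N^2$. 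Inserting these values, the right-hand side is bounded below by
\begin{equation}
\notag
\frac{1}{2} \sum_{\substack{(n_2,n_3) \in \mathbb{Z}^2 \setminus \{0\} \\ n_2^2 + n_3^2 \leq N^2}} \frac{1}{1 + n_2^2 + n_3^2},
\end{equation}
since on this range $\langle n-p\rangle^2 \leq 2 \langle p\rangle^2$.

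The final step is the standard two-dimensional dyadic estimate: for each dyadic scale $2^{k} \leq N$, the annulus $|(n_2,n_3)| \sim 2^k$ in $\mathbb{Z}^2$ contains $\sim 2^{2k}$ lattice points, each contributing $\sim 2^{-2k}$, so each dyadic annulus contributes an absolute constant. Summing over $0 \leq k \lesssim \log_2 N$ gives
\begin{equation}
\notag
I(-N^2, N e_1) \;\gtrsim\; \log N,
\end{equation}
which tends to infinity with $N$, proving that no uniform bound on $I(\tau, p)$ can hold when $\alpha = 1$. The only subtle point, and the one worth checking carefully, is that the chosen configuration $(m, n) = (p, (0,n_2,n_3))$ avoids the excluded cases $m = 0$ and $n = 0$ so that it is genuinely a contribution to the sum defining $I$; this is immediate from $p \neq 0$ and $(n_2, n_3) \neq (0,0)$.
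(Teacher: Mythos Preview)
Your proof is correct and follows essentially the same approach as the paper: choose $p=(N,0,0)$, $\tau=-|p|^2$, freeze one of the summation variables at $p$ (the paper fixes $n=p$ and sums in $m$, you fix $m=p$ and sum in $n$, which is equivalent by the $m\leftrightarrow n$ symmetry of \eqref{thesum2}), and reduce to the two-dimensional sum $\sum_{|k|\le N}\frac{1}{1+|k|^2}\gtrsim\log N$. The only cosmetic difference is that the paper estimates this last sum via an integral comparison and partial fractions, whereas you use a dyadic annulus count; both are standard. Your final remark about avoiding $m=0$, $n=0$ is unnecessary (those terms are part of $I$ and all terms are nonnegative), but it does no harm.
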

\begin{proof}
Let us recall that, when $\alpha=1$:
\begin{align*}
I(\tau,p)=\sum_{m, n \in \mathbb{Z}^3} \frac{\delta\left(\tau+|p|^2-2 \langle n, m \rangle\right) \langle p \rangle^2}{\langle m-p \rangle^2 \langle n-p \rangle^2 \langle p-n-m \rangle^2}.
\end{align*}
Let us fix $\kappa \gg 1$.
We then take $p=(\kappa,0,0),\tau=-|p|^2$ and we look at the part of the sum where $n=p$. In other words, we just sum in $m$ variable. The condition that
\begin{align*}
\tau+|p|^2-2 \langle n, m \rangle=0
\end{align*}
is equivalent to
\begin{align*}
\langle n, m \rangle=0.
\end{align*}
If we write $m \in \mathbb{Z}^3$ as $m=(m_1,m_2,m_3)$, we can deduce that $m_1=0$. We therefore need to estimate:
\begin{align*}
\sum_{m_2,m_3 \in \mathbb{Z}} \frac{\kappa^2}{\big(1+\kappa^2+m_2^2+m_3^2\big) \cdot \big(1+m_2^2+m_3^2\big)}
\sim \int_{\mathbb{R}^2} \frac{\kappa^2}{\big(1+\kappa^2+|x|^2\big) \cdot \big(1+|x|^2 \big)} \,dx=
\end{align*}
\begin{equation}
\label{logbound1}
=\int_{\mathbb{R}^2} \Big(\frac{1}{1+|x|^2}-\frac{1}{1+\kappa^2+|x|^2}\Big) \,dx.
\end{equation}
Let us note that whenever $1+|x|^2 \leq \kappa^2$, it is the case that $\frac{1}{1+|x|^2} \geq \frac{2}{1+\kappa^2+|x|^2}$. Hence, the expression in \eqref{logbound1} is:
\begin{align*}
\geq \frac{1}{2} \int_{1+|x|^2 \leq \kappa^2} \frac{1}{1+|x|^2} \, dx
\end{align*}
which by passing to polar coordinates is
\begin{align*}
\sim \int_0^{\sqrt{\kappa^2-1}} \frac{r}{1+r^2} \,dr = \frac{1}{2} \ln(1+r^2)\Big|_{r=0}^{r=\sqrt{\kappa^2-1}}=\ln \kappa.
\end{align*}
In other words,
\begin{align*}
I(\tau,p) \gtrsim \ln \kappa.
\end{align*}
The proposition now follows.
\end{proof}

We now argue by duality to see how we can relate the unboundedness of $I(\tau,p)$ when $\alpha=1$ to the original spacetime estimate. Before we prove Proposition \ref{Alpha1unbounded}, let us first prove the following related result for the individual operators $B^{+}_{j,k+1}$ and $B^{-}_{j,k+1}$ given by \eqref{Bjk+} and \eqref{Bjk-}, since the analysis of these operators was crucial in the proof of Proposition \ref{Spacetime Bound}.
\begin{proposition}
\label{alpha1unbounded2}
There does not exist $C_1>0$ such that for all $(\gamma_0^{(n)}) \in \mathcal{H}^1$, $k \in \mathbb{N}$ and $j \in \{1,\ldots,k\}$ one has:
\begin{equation}
\label{alpha1unbounded}
\|S^{(k,1)}B^{+}_{j,k+1} \, \mathcal{U}^{(k+1)}(t)\gamma_0^{(k+1)}\|_{L^2([0,2\pi] \times \Lambda^k \times \Lambda^k)} \leq C_1 \|S^{(k+1,1)} \gamma_0^{(k+1)}\|_{L^2(\Lambda^{k+1} \times \Lambda^{k+1})}.
\end{equation}
The analogous statement holds when $B^{+}_{j,k+1}$ is replaced by $B^{-}_{j,k+1}$.
\end{proposition}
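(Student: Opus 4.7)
The plan is to argue by duality, reducing the desired unboundedness to the multiplier lower bound furnished by Lemma \ref{alpha1unbounded1}. It suffices to handle the case $k=1$, $j=1$, and the operator $B^+_{1,2}$: the analysis for $B^-_{1,2}$ is obtained by swapping the roles of the primed and unprimed variables in (\ref{Bjk+})--(\ref{Bjk-}), and the extension to general $(k,j)$ is routine. Let $T_0 : L^2(\Lambda^2 \times \Lambda^2) \to L^2([0,2\pi] \times \Lambda \times \Lambda)$ denote the linear map $h \mapsto S^{(1,1)} B^+_{1,2}\mathcal U^{(2)}(t)(S^{(2,1)})^{-1} h$, and let $T$ be its restriction to the subspace $L^2_{\mathrm{sym}}$ consisting of kernels that meet the symmetries (\ref{eq:conditionsongamma}). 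The failure of (\ref{alpha1unbounded}) with a uniform constant is precisely the statement that $\|T\| = +\infty$.

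First I would compute $T_0 T_0^*$ on the Fourier side. Starting from (\ref{FTSpaceTime}) and Plancherel, with the change of variables $n := n_2$, $m := -n'_2$ that already appears in the reduction to $I(\tau, p)$, a direct calculation yields the clean identity
\begin{equation*}
(T_0 T_0^* F)^\sim(\tau, n_1, n'_1) \;=\; \tilde I(\tau, n_1)\, \tilde F(\tau, n_1, n'_1), \qquad \tilde I(\tau, p) := I\bigl(\tau - |p|^2,\, p\bigr).
\end{equation*}
Thus $T_0 T_0^*$ acts as a pure Fourier multiplier whose symbol is exactly the quantity studied in Lemma \ref{alpha1unbounded1}. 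Setting $p_0 := (\kappa,0,0)$ with $\kappa \gg 1$ and choosing $F_\kappa$ to be the unit Dirac mass at $(\tau, n_1, n'_1) = (0, p_0, 0)$, one obtains
\begin{equation*}
\|T_0^* F_\kappa\|_{L^2}^2 \;=\; \langle T_0 T_0^* F_\kappa, F_\kappa\rangle \;=\; \tilde I(0, p_0) \;=\; I(-|p_0|^2,\, p_0) \;\gtrsim\; \ln \kappa,
\end{equation*}
the last inequality being Lemma \ref{alpha1unbounded1}.

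The remaining step is to transfer this bound from $T_0$ to the symmetric operator $T$. Since $T$ is the restriction of $T_0$ to $L^2_{\mathrm{sym}}$, its adjoint factors as $T^* = P T_0^*$, where $P$ denotes the orthogonal projection onto $L^2_{\mathrm{sym}}$. Inspection of the explicit formula for $\widehat{T_0^* F_\kappa}$ shows that the $\ln \kappa$ blow-up is contributed by the pile-up locus
\begin{equation*}
(q_1, q_2;\, q'_1, q'_2) = (p_0 - m,\, 0;\, 0,\, -m), \qquad m \in M := \{(0, m_2, m_3) \in \mathbb{Z}^3 : 1 \leq m_2^2 + m_3^2 \leq \kappa^2\},
\end{equation*}
on which the value equals $\langle p_0\rangle / (\langle p_0 - m\rangle \langle m\rangle)$. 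A short case analysis verifies that for each $m \in M$ the four members of the $S_2 \times \mathbb{Z}/2$ symmetry orbit of $(p_0 - m, 0;\, 0, -m)$ are distinct, that the other three orbit members fail the support constraints of $T_0^* F_\kappa$, and that distinct $m \in M$ yield disjoint orbits. Consequently $P T_0^* F_\kappa$ equals $\tfrac14 T_0^* F_\kappa$ pointwise on each pile-up orbit, giving
\begin{equation*}
\|T^* F_\kappa\|^2 \;=\; \|P T_0^* F_\kappa\|^2 \;\geq\; \tfrac14 \sum_{m \in M} \frac{\langle p_0\rangle^2}{\langle p_0 - m\rangle^2 \langle m\rangle^2} \;\gtrsim\; \ln \kappa,
\end{equation*}
so that $\|T\|^2 = \|T^*\|^2 \gtrsim \ln \kappa$. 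Letting $\kappa \to \infty$ precludes any uniform constant $C_1$. The principal obstacle I anticipate lies in the symmetrization bookkeeping: one must confirm that the other support components of $T_0^* F_\kappa$ (notably the sub-families $q_1 = p_0$ and $q_1 = q'_2$, and the generic points on the Thales sphere $(q_1 - p_0)\cdot(q_1 - q'_2) = 0$) do not interact destructively with the pile-up contribution under the projection $P$. These verifications are elementary but unavoidable.
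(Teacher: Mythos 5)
Your $TT^*$ reformulation is a genuine and illuminating variant of the paper's argument: where the paper constructs the near-extremizing kernel $\gamma_0^{(2)}$ by hand (essentially, an unnormalized copy of $T_0^*F_\kappa$) and then evaluates a single Fourier coefficient of the output, you compute $\|T_0^*F_\kappa\|^2$ directly through the operator $T_0T_0^*$. The two routes rely on the same input, Lemma~\ref{alpha1unbounded1}, and are dual to each other. Your version is also the more careful of the two in one respect: the symmetrization constraint $(\ref{eq:conditionsongamma})$ that you raise at the end is in fact glossed over in the paper's own proof, whose $\gamma_0^{(2)}$ is not symmetric either.

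However, there is a genuine computational error in the identification of the $T_0T_0^*$ multiplier, and it is not cosmetic. Starting from $(\ref{FTSpaceTime})$ with $k=1$, the delta constraint is $\tau + |n_1-n_2+n'_2|^2 + |n_2|^2 - |n'_1|^2 - |n'_2|^2 = 0$, so with $n:=n_2$, $m:=-n'_2$ the shifted time-frequency entering $I(\cdot,n_1)$ is $\tau - |n'_1|^2$, not $\tau - |n_1|^2$. (This is precisely the paper's change of variable ``$\tau \mapsto \tau + \sum_{\ell=2}^{k}|n_\ell|^2 - \sum_{\ell=1}^{k}|n'_\ell|^2$'', which for $k=1$ reads $\tau \mapsto \tau - |n'_1|^2$.) The correct identity is therefore
\begin{equation*}
(T_0T_0^*F)^\sim(\tau,n_1,n'_1) \;=\; I\bigl(\tau - |n'_1|^2,\,n_1\bigr)\,\tilde F(\tau,n_1,n'_1),
\end{equation*}
a multiplier that depends on $n'_1$ and not only on $(\tau,n_1)$. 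With your claimed formula $\tilde I(\tau,p)=I(\tau-|p|^2,p)$ and the Dirac mass at $(0,p_0,0)$ you ``obtain'' $I(-|p_0|^2,p_0)$, but with the correct multiplier that same Dirac mass yields only $I(0,p_0)$, which Lemma~\ref{alpha1unbounded1} does not estimate; the two errors cancel by coincidence. The fix is to place $F_\kappa$ at $(\tau,n_1,n'_1)=(-\kappa^2,p_0,0)$; then the pile-up locus $(q_1,q_2;q'_1,q'_2)=(p_0-m,0;0,-m)$, $m\in M$, which you correctly identify, is exactly the fibre of $\widehat{T_0^*F_\kappa}$ over that time-frequency, and your subsequent computation of the pointwise value $\langle p_0\rangle/(\langle p_0-m\rangle\langle m\rangle)$ is consistent. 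Finally, the symmetrization claims (distinct orbits, a single surviving orbit member, disjointness in $m$) do remain unverified in your write-up; they are plausible but need to be checked, as the crucial point is that the Hermitian flip sends $(p_0-m,0;0,-m)\mapsto(0,-m;p_0-m,0)$, which has $q'_1\neq 0$ for $p_0\neq m$ and so lies outside the support of $T_0^*F_\kappa$, and likewise for the $S_2$ swap, so that no destructive interference can occur.
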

In other words, Proposition \ref{alpha1unbounded2} shows that the estimates for $B^{+}_{j,k+1}$ and $B^{-}_{j,k+1}$ which we needed to use when $\alpha>1$ do not hold when $\alpha=1$. We now give the proof of Proposition \ref{alpha1unbounded2}. The main idea will be to choose a sequence of initial data $(\gamma^{(k)}_0)_k$ for which the constant $C_1$ in the estimate
\eqref{alpha1unbounded} becomes unbounded.
\begin{proof}
It suffices to show that \eqref{alpha1unbounded} does not hold when $j=k=1$.
By using the summation index we used in order to deduce \eqref{thesum2}, we obtain:
$$\big(S^{(1,1)} B^{+}_{1,2} \, \mathcal{U}^{(2)}(t) \gamma_0^{(2)}\big)\,\,\widetilde{}\,\, (\tau, p\,;q)=$$
$$
=\langle p \rangle \langle q \rangle \sum_{m,n \in \mathbb{Z}^3} \delta \Big( \tau-|q|^2+|p|^2 - 2 \langle n, m \rangle \Big) (\gamma_0^{(2)})\,\,\widehat{}\,\,(p-m,p-n;q,p-n-m).$$
Let us fix $\kappa \gg 1$, and let us take $\bar{p}:=(\kappa,0,0)$ as in the proof of Proposition \ref{alpha1unbounded1}.
By the proof of Lemma \ref{alpha1unbounded1}, it follows that we can find a sequence $(c_{m,n}) \in \ell^2_{m,n}$, with all $c_{m,n} \geq 0$ such that:
\begin{equation}
\label{logkappa}
\sum_{m, n \in \mathbb{Z}^3} \frac{\delta\left(-2 \langle n, m \rangle\right) \langle \bar{p} \rangle}{\langle m-\bar{p} \rangle \langle n-\bar{p} \rangle \langle \bar{p}-n-m \rangle} c_{m,n} \gtrsim \sqrt{\ln \kappa} \|c_{m,n}\|_{\ell^2_{m,n}}.
\end{equation}
We now find $\gamma^{(2)}_0$ such that, for all $m,n \in \mathbb{Z}^3$:
$$\langle \bar{p}-m \rangle \langle \bar{p}-n \rangle \langle \bar{p}-n-m \rangle (\gamma_0^{(2)})\,\,\widehat{}\,\,(\bar{p}-m,\bar{p}-n;0,\bar{p}-n-m)=c_{m,n}$$
and such that
$$(\gamma_0^{(2)})\,\,\widehat{}\,\,(p_1,p_2;q_1,q_2)=0,$$
for all $(p_1,p_2,q_1,q_2) \in (\mathbb{Z}^3)^4$ which are not of the form
$(\bar{p}-m,\bar{p}-n;0,\bar{p}-n-m)$ for some $m,n \in \mathbb{Z}^3$.
We note that, in this way, $\gamma_0^{(2)}$ is uniquely determined, and it satisfies:
\begin{equation}
\label{S21gamma}
\|S^{(2,1)}\gamma_0^{(2)}\|_{L^2(\Lambda^2 \times \Lambda^2)}=\|c_{m,n}\|_{\ell^2_{m,n}} < \infty.
\end{equation}
Here, we note that the four-dimensional sum defining the above norm of $\gamma_0^{(2)}$ is reduced to a two-dimensional sum since we only have two degrees of freedom $m$ and $n$.
Now, we observe:
$$
\|\big(S^{(1,1)}B^{+}_{1,2} \gamma_0^{(2)}\big)\,\,\widetilde{}\,\, (\tau,p\,;q)\|_{\ell^2_{\tau,p\,;q}}=$$
$$=\Big\| \langle p \rangle \langle q \rangle \sum_{m,n \in \mathbb{Z}^3} \delta \big( (\tau-|q|^2+|p|^2) - 2 \langle n, m \rangle \big) (\gamma_0^{(2)})\,\,\widehat{}\,\,(p-m,p-n;q,p-n-m)\Big\|_{\ell^2_{\tau,p\,;q}}$$
Since $(\gamma_0^{(2)})\,\,\widehat{}\,\, \geq 0,$ it follows that the above expression is:
$$ \geq \Big\| \langle p \rangle \langle q \rangle \sum_{m,n \in \mathbb{Z}^3} \delta \big(- 2 \langle n, m \rangle \big) (\gamma_0^{(2)})\,\,\widehat{}\,\,(p-m,p-n;q,p-n-m)\Big\|_{\ell^2_{p\,;q}}.
$$
Namely, we just look at the contribution to the sum in $\tau$ where $\tau=|q|^2-|p|^2$ and we use properties of the Kronecker delta function.
Furthermore, we can only consider the contribution of the sum in $p$ and $q$ where $p=\bar{p}$ and $q=0$ to deduce that we can bound this quantity from below by:
$$ \geq \sum_{m,n \in \mathbb{Z}^3} \delta \big(- 2 \langle n, m \rangle \big) \langle \bar{p} \rangle
(\gamma_0^{(2)})\,\,\widehat{}\,\,(\bar{p}-m,\bar{p}-n;0,\bar{p}-n-m)
$$
which, by construction, equals:
$$= \sum_{m,n \in \mathbb{Z}^3} \frac{\delta \big(- 2 \langle n, m \rangle \big) \langle \bar{p} \rangle}{\langle \bar{p}-m \rangle \langle \bar{p}-n \rangle \langle \bar{p}-n-m \rangle} \, c_{m,n}
$$
By \eqref{logkappa} and \eqref{S21gamma}, we can bound this quantity from below by:
$$\gtrsim \sqrt{\ln \kappa} \, \|c_{m,n}\|_{\ell^2_{m,n}}= \sqrt{\ln \kappa} \, \|S^{(2,1)}\gamma_0^{(2)}\|_{L^2(\Lambda^2 \times \Lambda^2)}.$$
In other words, we have proven that for the $\gamma_0^{(2)}$ with $\|S^{(2,1)} \gamma_0^{(2)}\|_{L^2(\lambda^2 \times \lambda^2)} < \infty$ constructed as above, the following estimate holds:
$$\|S^{(1,1)}B^{+}_{1,2} \, \mathcal{U}^{(2)}(t)\gamma_0^{(2)}\|_{L^2([0,2\pi] \times \Lambda \times \Lambda)} \gtrsim \sqrt{\ln \kappa} \, \|S^{(2,1)} \gamma_0^{(2)}\|_{L^2(\Lambda^2 \times \Lambda^2)}.$$
We note that the implied constant is independent of $\kappa$.
Hence, \eqref{alpha1unbounded} cannot hold for a uniform constant $C_1>0$. The argument for $B^{-}_{j,k+1}$ is proved in a similar way. We omit the details.
\end{proof}
We recall that $B_{j,k+1}=B^{+}_{j,k+1}-B^{-}_{j,k+1}$. Let us note that Proposition \ref{alpha1unbounded2} does not immediately imply
Proposition \ref{Alpha1unbounded} since, in principle, there could be cancellation in the difference. This is in contrast to the proof we gave above for boundedness when $\alpha>1$ when it was sufficient to consider either $B^{+}_{j,k+1}$ or $B^{-}_{j,k+1}$ and argue by symmetry. We now show that a refinement of the proof of Proposition \ref{alpha1unbounded2} will indeed allow us to prove Proposition \ref{Alpha1unbounded}. The key point is the fact that we can choose initial data $\gamma^{(k+1)}_0$ with the property that there exists frequencies $(\tau, \vec{p}_k\,;\vec{p}'_k) \in \mathbb{Z} \times (\mathbb{Z}^3)^k \times (\mathbb{Z}^3)^k$ at which the contribution from $B^{+}_{j,k+1}$ is (logarithmically) large, but at which the contribution from $B^{-}_{j,k+1}$ consists of only a finite number of bounded terms. Hence, for the initial data $\gamma^{(k+1)}_0$, there is no significant cancellation coming from the operators $B^{+}_{j,k+1}$ and $B^{-}_{j,k+1}$ at this frequency.
\begin{proof} (of Proposition \ref{Alpha1unbounded})
As before, we will fix $j=k=1$, and we will choose an appropriate $\gamma_0^{(2)}$ for which we get logarithmic growth in the implied constant.
Let us first note that:
$$\big(S^{(1,1)} B_{1,2} \, \mathcal{U}^{(2)}(t) \gamma_0^{(2)}\big)\,\,\widetilde{}\,\, (\tau, p\,;q)=$$
$$
=\langle p \rangle \langle q \rangle \sum_{m,n \in \mathbb{Z}^3} \Big[\delta \Big( (\tau-|q|^2+|p|^2) - 2 \langle n, m \rangle \Big) (\gamma_0^{(2)})\,\,\widehat{}\,\,(p-m,p-n;q,p-n-m)$$
\begin{equation}
\label{eq:bigsum}
-\delta((\tau+|p|^2-|q|^2)-2\langle n, m \rangle)
(\gamma_0^{(2)})\,\,\widehat{}\,\,(p,q-m-n;q-n,q-m) \Big]
\end{equation}
As before, we take $\bar{p}:=(\kappa,0,0)$. Now, we also take $\overline{q}:=(0,0,0)$.
By the proof of Lemma \ref{alpha1unbounded1}, it follows that there exists $c \in \ell^2_m$, with $c_m \geq 0$ such that:
\begin{equation}
\label{eq:cm}
\sum_{m \in \mathbb{Z}^3} \frac{\delta(-2 \langle \overline{p},m\rangle) \langle \bar{p} \rangle}{\langle \overline{p}-m \rangle \langle -m \rangle}\, c_m \gtrsim \sqrt{\ln \kappa}\, (\sum_{m \in \mathbb{Z}^2}c_m^2)^{\frac{1}{2}}
\end{equation}
We now take $\gamma^{(2)}_0$ such that, for all $m \in \mathbb{Z}^3$:
$$\langle \bar{p}-m \rangle \langle -m \rangle (\gamma_0^{(2)})\,\,\widehat{}\,\,(\bar{p}-m,0;0,-m)=c_m$$
and such that
$$(\gamma_0^{(2)})\,\,\widehat{}\,\,(p_1,p_2;q_1,q_2)=0,$$
for all $(p_1,p_2,q_1,q_2) \in (\mathbb{Z}^3)^4$ which are not of the form
$(\bar{p}-m,0;0,-m)$ for some $m \in \mathbb{Z}^3$.
Then, $\|S^{(2,1)} \gamma^{(2)}_0\|_{L^2(\Lambda^2 \times \Lambda^2)} <\infty$.
Let us take $\bar{\tau}:=|\bar{p}|^2-|\bar{q}|^2$ and let us estimate: 
$$\big|\big(S^{(1,1)} B_{1,2} \, \mathcal{U}^{(2)}(t) \gamma_0^{(2)}\big)\,\,\widetilde{}\,\, (\bar{\tau}, \bar{p}\,;\bar{q})\big|$$ 
from below.
We first note that the non-zero terms in the contribution to $(\ref{eq:bigsum})$ coming from $B^{+}_{1,2}$ (i.e. the first sum) occur when $n=\bar{p}$, hence, the first sum is only in $m$. Moreover, it equals:
$$\sum_{m \in \mathbb{Z}^3} \frac{\delta(-2 \langle \bar{p},m \rangle) \langle \bar{p} \rangle}{\langle \bar{p}-m \rangle \langle -m \rangle} \langle \bar{p}-m \rangle \langle -m \rangle (\gamma_0^{(2)})\,\,\widehat{}\,\,(\bar{p}-m,0;0,-m)=\sum_{m \in \mathbb{Z}^3} \frac{\delta(-2 \langle \overline{p},m\rangle) \langle \bar{p} \rangle}{\langle \overline{p}-m \rangle \langle -m \rangle}\, c_m$$
$$\gtrsim \sqrt{\ln \kappa} (\sum_{m \in \mathbb{Z}^3} c_m^2)^{\frac{1}{2}} = \sqrt{\ln \kappa} \|S^{(2,1)}\gamma^{(2)}_0\|_{L^2(\Lambda^2 \times \Lambda^2)}.$$
Here, we used $(\ref{eq:cm})$ and the construction of $\gamma^{(2)}_0$.
Furthermore, when considering: $$\big(S^{(1,1)} B_{1,2} \, \mathcal{U}^{(2)}(t) \gamma_0^{(2)}\big)\,\,\widetilde{}\,\, (\bar{\tau}, \bar{p}\,;\bar{q}),$$ we note that the only non-zero contribution in the second term in $(\ref{eq:bigsum})$ occurs when $\bar{q}-m-n=\bar{q}-n=0$ and $\bar{p}=\bar{p}-\bar{q}+m$. In particular, one only obtains the contribution when $m=n=(0,0,0)$. The corresponding term equals:
$$-\delta(\bar{\tau}+|\bar{p}|^2) \langle \bar{p} \rangle (\gamma_0^{(2)})\,\,\widehat{}\,\,(\bar{p},0;0,0).$$
The latter expression is bounded in absolute value by $\|S^{(2,1)} \gamma^{(2)}_0\|_{L^2(\Lambda \times \Lambda)}$.
From the previous discussion, we may conclude that, for $\bar{\tau}, \bar{p}, \bar{q}$ as above:
$$\Big|\big(S^{(1,1)} B_{1,2} \, \mathcal{U}^{(2)}(t) \gamma_0^{(2)}\big)\,\,\widetilde{}\,\, (\bar{\tau}, \bar{p}\,;\bar{q})\Big|
\gtrsim \sqrt{\ln \kappa} \|S^{(2,1)} \gamma^{(2)}_0\|_{L^2(\Lambda^2 \times \Lambda^2)}.$$
Consequently, we can take the $\ell^2_{\tau, p\,;q}$ norm to deduce that:
$$\|S^{(1,1)} B_{1,2} \,\mathcal{U}^{(2)}(t) \gamma^{(2)}_0\|_{L^2([0,2\pi] \times \Lambda \times \Lambda)} \gtrsim \sqrt{\ln \kappa} \|S^{(2,1)} \gamma^{(2)}_0\|_{L^2(\Lambda^2 \times \Lambda^2)}.$$
The Proposition now follows.
\end{proof}
It is possible to prove that Proposition \ref{Spacetime Bound} still holds in the case $\alpha=1$ when one is considering factorized elements of $\mathcal{H}^1$ (We recall the definition of $\mathcal{H}^{\alpha}$ from \eqref{mathcalHalpha}). This result is proved in Proposition \ref{factorized} below. Such an estimate, however, is not applicable in the proof of our main result since the property of factorization is not preserved under the Duhamel iteration given in the proof of Theorem \ref{Theorem 1} in the next section.
\section{Proof of the main result}
\label{MainResult}
We are now able to give the proof of our main result. Let us note the following fact:
\begin{proposition}
\label{Proposition 2} Suppose that $\alpha>1$ and that $\Gamma(t)=(\gamma^k(t))$ is a solution to $(\ref{eq:GPhierarchy})$ in the class $\mathcal{A}$. Then, there exists a continuous, positive function $\rho: \mathbb{R} \rightarrow \mathbb{R}$ such that, for all $\tau \in \mathbb{R}$,
$\Gamma(\tau)=\vec{0}$ implies that $\Gamma(s)=\vec{0}$ for all $s \in [\tau-\rho(\tau),\tau+\rho(\tau)]$.
\end{proposition}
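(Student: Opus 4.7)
The plan is to follow the iterated Duhamel strategy of Klainerman and Machedon \cite{KM}, adapted to the periodic setting, using the spacetime bound of Proposition \ref{Spacetime Bound} and the a priori bound defining $\mathcal{A}$. Since $\Gamma(\tau) = \vec{0}$, every component $\gamma^{(k+m)}(\tau)$ vanishes, so iterating Duhamel's formula $n$ times yields
$$\gamma^{(k)}(t) = (-i)^n \sum_{(j_1,\ldots,j_n)} \int_{t \geq t_1 \geq \cdots \geq t_n \geq \tau} \mathcal{U}^{(k)}(t-t_1) B_{j_1, k+1} \mathcal{U}^{(k+1)}(t_1-t_2) B_{j_2, k+2} \cdots B_{j_n, k+n}\bigl(\gamma^{(k+n)}(t_n)\bigr) \, dt_1 \cdots dt_n.$$
The sum has $k(k+1) \cdots (k+n-1)$ terms, so the first step is to apply the Klainerman-Machedon boardgame combinatorial argument, which (as asserted in Section 1.4 of the paper, transferring verbatim to the periodic setting) regroups this factorial count into at most $C^n$ equivalence classes of representative integrals, with $C$ a universal constant.

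For each representative I would estimate $\|S^{(k,\alpha)} \gamma^{(k)}\|_{L^\infty_{[\tau-\rho,\tau+\rho]} L^2_x}$ by alternating three moves. Minkowski's inequality pulls the outer time integral outside the norm; Cauchy-Schwarz on an interval of length $2\rho$ converts an $L^1_t$ norm into $(2\rho)^{1/2}$ times an $L^2_t$ norm; Proposition \ref{Spacetime Bound} then bounds $\|S^{(k+m,\alpha)} B_{j_{m+1}, k+m+1} \mathcal{U}^{(k+m+1)}(\cdot-s) \cdot\|_{L^2_t L^2_x}$ by $C_1$ times the $L^2_x$ norm of the next inner factor. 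Iterating $n-1$ times and invoking the a priori bound of class $\mathcal{A}$ on the innermost time integral (which requires $\rho \leq \sigma(\tau)$) gives
$$\|S^{(k,\alpha)} \gamma^{(k)}\|_{L^\infty_{[\tau-\rho,\tau+\rho]} L^2_x} \leq C^n \bigl(C_1 (2\rho)^{1/2}\bigr)^{n-1} f^{k+n}(\tau).$$

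To finish, I choose $\rho = \rho(\tau) > 0$ so that $\rho \leq \sigma(\tau)$ and $C \cdot C_1 \cdot (2\rho)^{1/2} \cdot f(\tau) < \tfrac{1}{2}$; then the right-hand side is at most $\tfrac{f(\tau)^k}{C \cdot C_1 \cdot (2\rho)^{1/2}} \cdot \bigl(C \cdot C_1 \cdot (2\rho)^{1/2} \cdot f(\tau)\bigr)^n$, which tends to zero as $n \to \infty$. Consequently $\gamma^{(k)} \equiv 0$ on $[\tau-\rho,\tau+\rho]$ for every $k$, i.e. $\Gamma \equiv \vec{0}$ on this interval. The continuity and positivity of $\rho(\tau)$ in $\tau$ follow directly from the corresponding properties of $f$ and $\sigma$.

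The main technical obstacle is the mismatch between the hypothesis of Proposition \ref{Spacetime Bound} --- which takes a fixed initial datum $\gamma_0^{(k+1)}$ --- and the Duhamel iterates, whose inner factors $\gamma^{(k+m)}(s)$ depend on time. This is resolved by the Minkowski/Cauchy-Schwarz/spacetime-bound sandwich described above, but the ordering must be chosen carefully so that the innermost remaining quantity is precisely the $L^1_t L^2_x$ expression controlled by the a priori bound of $\mathcal{A}$, rather than a higher-regularity norm of $\gamma^{(k+n)}$ itself for which no control is available. The factorial explosion in the count of Duhamel terms is absorbed by the boardgame reduction to $C^n$ classes, which is what permits the small factor $(2\rho)^{1/2}$ from Cauchy-Schwarz to close the estimate.
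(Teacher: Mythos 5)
Your proposal follows precisely the paper's own argument: iterated Duhamel expansion from $\gamma^{(k)}(\tau)=0$, the Klainerman--Machedon boardgame reduction to $O(C^n)$ representative integrals (the paper's count is $4^n$), the Minkowski/Cauchy--Schwarz/Proposition \ref{Spacetime Bound} iteration yielding a bound of the form (boardgame count) $\times \,[C_1\rho^{1/2}]^{n-1} f^{k+n}(\tau)$, and the choice $\rho(\tau)=\min\{\sigma(\tau),\,c/(C_1 f(\tau))^2\}$ so that the bracketed quantity is small and the bound vanishes as $n\to\infty$. (There is a harmless algebra slip: rewriting $C^n(C_1(2\rho)^{1/2})^{n-1}f^{k+n}$ as $\tfrac{f^k}{CC_1(2\rho)^{1/2}}(CC_1(2\rho)^{1/2}f)^n$ drops a factor of $C$, but this does not affect the limit.)
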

We notice how Proposition \ref{Proposition 2} implies the result:
\begin{proof}(of Theorem \ref{Theorem 1} assuming Proposition \ref{Proposition 2}):
Let $\Gamma(t)=(\gamma^{(k)}(t))$ be a solution to $(\ref{eq:homogeneousGPhierarchy})$.
Let $S:=\{t \in \mathbb{R}; \Gamma(s)=\vec{0}, \mbox{for all}\, s \in [0,t]\}$.
Since $0 \in S$, $S$ is non-empty.
We want to argue that $sup\, S=+\infty$, since then $\Gamma(t)=\vec{0}$ for all $t \geq 0$.
In order to prove this, we argue by contradiction. Namely, we suppose that $t:=sup\, S<+\infty$. We find a sequence $(s_n)$ in $S$ such that $s_n \rightarrow t$. Since the function $\rho$ constructed in Proposition \ref{Proposition 2} is positive and continuous, it follows that there exists $n$ sufficiently large such that $s_n+\rho(s_n)>t$. Namely, we use the fact that $s_n + \rho(s_n) \rightarrow t + \rho(t)$ for some $\rho(t)>0$. By Proposition \ref{Proposition 2}, it follows that $\Gamma(s)=0$ for all $s \in [s_n,s_n+\rho(s_n)]$, which contradicts the choice of $t$.
By an analogous argument, it follows that $\Gamma(t)=\vec{0}$ for all $t \leq 0$.
\end{proof}
We now prove Proposition \ref{Proposition 2}
\begin{proof}
Let us define: $B^{(k+1)}:=\sum_{j=1}^{k}B_{j,k+1}(\gamma^{(k+1)})$.
Hence, we are looking at $\Gamma(t)=(\gamma^{(k)}(t))$ which solves, for all $k \in \mathbb{N}$:
\begin{equation}
\label{eq:tauGPhierarchy}
\begin{cases}
i \partial_t \gamma^{(k)} + (\Delta_{\vec{x}_k}-\Delta_{\vec{x}_k'})\gamma^{(k)}=\sum_{j=1}^{k}B_{j,k+1}(\gamma^{(k+1)})=B^{(k+1)}(\gamma^{(k+1)})\\
\gamma^{(k)}(\tau)=0.
\end{cases}
\end{equation}
We first show that $\gamma^{(1)}(t)=0$ for all $t \in [\tau,\tau+\rho(\tau)]$, for some $\rho(\tau)$ which will be determined. As in \cite{KM}, given $n \in \mathbb{N}$ and $\underline{t}_{n+1}=(t_1,\ldots,t_{n+1})$, we let:
$$J(\underline{t}_{n+1}):=\mathcal{U}^{(1)}(t_1-t_2)B^{(2)} \, \mathcal{U}^{(2)}(t_2-t_3) \cdots
\mathcal{U}^{(n)}(t_n-t_{n+1})B^{(n+1)}(\gamma^{(n+1)})(t_{n+1}),$$
and we let $M$ denote the set of maps $\mu:\{2,\ldots,n+1\} \rightarrow \{1,\ldots,n\}$ such that $\mu(2)=1$ and $\mu(j)<j$ for all $j$. Then, we can write:
$$J(\underline{t}_{n+1})=\sum_{\mu \in M} J(\underline{t}_{n+1};\mu)$$
where:
$$J(\underline{t}_{n+1};\mu):=\mathcal{U}^{(1)}(t_1-t_2)B_{1,2} \, \mathcal{U}^{(2)}(t_2-t_3)B_{\mu(3),3}\cdots \, \mathcal{U}^{(n)}(t_n-t_{n+1})B_{\mu(n+1),n+1}(\gamma^{(n+1)})(t_{n+1}).$$
By a repeated application of Duhamel's principle, one can write, for some $c \in \mathbb{C}$ with $|c|=1$:
$$\gamma^{(1)}(t_1)=c \int_{\tau}^{t_1} \int_{\tau}^{t_2} \cdots \int_{\tau}^{t_n} J(\underline{t}_{n+1}) dt_{n+1} \cdots dt_3 dt_2=$$
\begin{equation}
\label{eq:Duhamel}
= c \sum_{\mu \in M} \int_{\tau}^{t_1} \int_{\tau}^{t_2} \cdots \int_{\tau}^{t_n} J(\underline{t}_{n+1};\mu) dt_{n+1} \cdots dt_3 dt_2,
\end{equation}
whenever $t_1 \geq \tau$.
We recall that the \emph{boardgame argument} in \cite{KM} shows that every $\mu \in M$ can be encoded in terms of an $n \times n$ matrix.
This matrix can be reduced to a special, upper echelon matrix, which by the above identification corresponds to a map $\mu_s\in M$. This transformation is done by a series of acceptable moves. This gives us an equivalence of elements of $M$. If $\mu$ and $\mu_s$ are equivalent, we denote this as $\mu \sim \mu_s$. We omit the details of this construction, but we remark that, for every special upper echelon matrix $\mu_s$, there exists $D=D(\mu_s) \subseteq [\tau,t_1]^n$ such that:
$$\sum_{\mu \sim \mu_s} \int_{\tau}^{t_1} \int_{\tau}^{t_2} \cdots \int_{\tau}^{t_n} J(\underline{t}_{n+1};\mu) dt_{n+1} \cdots dt_3 dt_2 = \int_{D} J(\underline{t}_{n+1};\mu_s)dt_{n+1} \cdots dt_3 dt_2.$$
We also remark that the construction is written out in the case $\tau=0$, but the same argument works for general $\tau$. Finally, it is shown that the number of $n \times n$ special upper echelon matrices is at most $4^n$. Strictly speaking, all of these arguments are given on the spatial domain of $\mathbb{R}^3$, but they carry over to the case of $\mathbb{T}^3$ without any modifications.
From the preceding, it follows that we can write
\begin{equation}
\label{eq:gamma1}
\gamma^{(1)}(t_1) = \sum_{\mu_S} \int_{D}J(\underline{t}_{n+1};\mu_s)dt_{n+1} \cdots dt_2
\end{equation}
Here $\mu_S$ ranges over the set of all $n \times n$ special upper echelon matrices, and hence there are at most $4^n$ summands. Let us fix one such $\mu_s$.
One then obtains:
$$\left\|S^{(1,\alpha)}\int_{D}J(\underline{t}_{n+1};\mu_s)dt_{n+1} \cdots dt_2\right\|_{L^2(\Lambda \times \Lambda)}=$$
$$=\left\|S^{(1,\alpha)}\int_{D}\mathcal{U}^{(1)}(t_1-t_2)B_{1,2} \, \mathcal{U}^{(2)}(t_2-t_3)
B_{\mu_s(3),3} \cdots B_{\mu_s(n+1),n+1}(\gamma^{(n+1)})(t_{n+1}) dt_{n+1} \cdots dt_2\right\|_{L^2(\Lambda \times \Lambda)}.$$
For fixed $t_2 \in [\tau,t_1]$, we let $D_{t_2}:=\{(t_3,\ldots,t_{n+1}) \in [\tau,t_1]^{n-1};(t_2,t_3,\ldots,t_{n+1}) \in D\}$.
Hence, since $S^{(1,\alpha)}$ and $\mathcal{U}^{(1)}(t_1-t_2)$ commute, it follows that the previous expression equals:
$$\left\|\int_{\tau}^{t_1} \mathcal{U}^{(1)}(t_1-t_2) \big(\int_{D_{t_2}} S^{(1,\alpha)}B_{1,2} \, \mathcal{U}^{(2)}(t_2-t_3)B_{\mu_s(3),3} \cdots B_{\mu_s(n+1),n+1}(\gamma^{(n+1)})(t_{n+1})dt_{n+1} \cdots dt_3 \big)dt_2\right\|_{L^2(\Lambda \times \Lambda)}$$
By Minkowski's inequality, this expression is:
$$\leq \int_{\tau}^{t_1} \left\|\mathcal{U}^{(1)}(t_1-t_2) \big(\int_{D_{t_2}} S^{(1,\alpha)}B_{1,2} \, \mathcal{U}^{(2)}(t_2-t_3)B_{\mu_s(3),3} \cdots B_{\mu_s(n+1),n+1}(\gamma^{(n+1)})(t_{n+1})dt_{n+1} \cdots dt_3 \big)\right\|_{L^2(\Lambda \times \Lambda)}dt_2$$
which by unitarity of $\mathcal{U}^{(1)}(t_2-t_3)$ on $L^2(\Lambda \times \Lambda)$ equals:
$$\int_{\tau}^{t_1} \left\|\int_{D_{t_2}} S^{(1,\alpha)}B_{1,2} \, \mathcal{U}^{(2)}(t_2-t_3)B_{\mu_s(3),3} \cdots B_{\mu_s(n+1),n+1}(\gamma^{(n+1)})(t_{n+1})dt_{n+1} \cdots dt_3\right\|_{L^2(\Lambda \times \Lambda)}dt_2$$
Applying Minkowski's inequality again, this expression is:
$$\leq \int_{\tau}^{t_1} \big(\int_{D_{t_2}} \|S^{(1,\alpha)}B_{1,2} \, \mathcal{U}^{(2)}(t_2-t_3)B_{\mu_s(3),3} \cdots B_{\mu_s(n+1),n+1}(\gamma^{(n+1)})(t_{n+1})\|_{L^2(\Lambda \times \Lambda)}dt_{n+1} \cdots dt_3 \big)dt_2$$
which by the fact that $D_{t_2} \subseteq [\tau,t_1]^{n-1}$ and by Fubini's theorem is:
$$\leq \int_{[\tau,t_1]^n} \|S^{(1,\alpha)}B_{1,2} \, \mathcal{U}^{(2)}(t_2-t_3)B_{\mu_s(3),3} \cdots B_{\mu_s(n+1),n+1}(\gamma^{(n+1)})(t_{n+1})\|_{L^2(\Lambda \times \Lambda)}dt_2 dt_3 \cdots dt_{n+1}$$
By the Cauchy-Schwarz inequality in $t_2$, this is:
$$\leq (t_1-\tau)^{\frac{1}{2}} \int_{[\tau,t_1]^{n-1}} \|S^{(1,\alpha)}B_{1,2} \, \mathcal{U}^{(2)}(t_2-t_3)B_{\mu_s(3),3} \cdots B_{\mu_s(n+1),n+1}(\gamma^{(n+1)})(t_{n+1})\|_{L^2(\mathbb[\tau,t_1] \times\Lambda \times \Lambda)}dt_3 \cdots dt_{n+1}$$
Since $\mathcal{U}^{(2)}(t_2-t_3)=\mathcal{U}^{(2)}(t_2)\mathcal{U}^{(2)}(-t_3)$, we can use Proposition \ref{Spacetime Bound} to deduce that this is:
$$\leq C_1(t_1-\tau)^{\frac{1}{2}} \int_{[\tau,t_1]^{n-1}} \|S^{(2,\alpha)}\mathcal{U}^{(2)}(-t_3)B_{\mu_s(3),3} \cdots B_{\mu_s(n+1),n+1}(\gamma^{(n+1)})(t_{n+1})\|_{L^2(\Lambda^2 \times \Lambda^2)}dt_3 \cdots dt_{n+1}$$
Since $S^{(2,\alpha)}$ and $\mathcal{U}^{(2)}(-t_3)$ commute, and since $\mathcal{U}^{(2)}(-t_3)$ is unitary on $L^2(\Lambda^2 \times \Lambda^2)$, this expression is:
$$=C_1(t_1-\tau)^{\frac{1}{2}} \int_{[\tau,t_1]^{n-1}} \|S^{(2,\alpha)}B_{\mu_s(3),3} \, \mathcal{U}^{(3)}(t_3-t_4)\cdots B_{\mu_s(n+1),n+1}(\gamma^{(n+1)})(t_{n+1})\|_{L^2(\Lambda^2 \times \Lambda^2)}dt_3 \cdots dt_{n+1}$$
We iterate this procedure $n-2$ more times to deduce that this is:
\begin{equation}
\label{eq:Duhamelterm1}
\leq (C_1 (t_1-\tau)^{\frac{1}{2}})^{n-1} \int_{\tau}^{t_1} \|S^{(n,\alpha)}B_{\mu_s(n+1),n+1}(\gamma^{(n+1)})(t_{n+1})\|_{L^2(\Lambda^n \times \Lambda^n)}dt_{n+1}.
\end{equation}
We now define the function $\rho=\rho(t)$ as:
\begin{equation}
\label{eq:rhodefinition}
\rho(t):=min\, \Big\{\sigma(t),\frac{1}{64 C_1^2 f^2(t)} \Big\}.
\end{equation}
where we recall that $C_1$ is the constant from Proposition \ref{Spacetime Bound} and $\sigma, f$ are as in $(\ref{eq:aprioribound})$.
Since $\sigma,f$ are continuous and positive functions, so is $\rho$.
Moreover, we choose $t_1$ such that:
\begin{equation}
\label{eq:t1choice}
t_1 \in [\tau,\tau+\rho(\tau)].
\end{equation}
Consequently, by the construction of $\rho$, and by the assumption $(\ref{eq:aprioribound})$, it follows that the expression in $(\ref{eq:Duhamelterm1})$ is:
\begin{equation}
\label{eq:Duhamelterm2}
\leq (C_1 (\rho(\tau))^{\frac{1}{2}})^{n-1} f^{n+1}(\tau) =
[C_1 f(\tau) (\rho(\tau))^{\frac{1}{2}}]^{n-1} f^2(\tau).
\end{equation}
We use $(\ref{eq:gamma1})$, $(\ref{eq:Duhamelterm2})$ and the triangle inequality to deduce that:
\begin{equation}
\label{eq:Duhamelterm3}
\|S^{(1,\alpha)}\gamma^{(1)}(t_1)\|_{L^2(\Lambda \times \Lambda)} \leq 4^n [C_1 f(\tau) (\rho(\tau))^{\frac{1}{2}}]^{n-1} f^2(\tau)=
\end{equation}
$$4[4 C_1 f(\tau) (\rho(\tau))^{\frac{1}{2}}]^{n-1} f^2(\tau).$$
By $(\ref{eq:rhodefinition})$, it follows that $4 C_1 f(\tau) (\rho(t))^{\frac{1}{2}} \leq \frac{1}{2}$,
hence by $(\ref{eq:Duhamelterm3})$, one has:
$$\|S^{(1,\alpha)}\gamma^{(1)}(t_1)\|_{L^2(\Lambda \times \Lambda)} \leq 4 \cdot \Big(\frac{1}{2}\Big)^{n-1} \cdot f^2(\tau).$$
We let $n \rightarrow \infty$ to deduce that $\gamma^{(1)}(t_1)=0$ for all $t_1 \in [\tau,\tau+\rho(\tau)]$.
We observe that an analogous argument shows that for all $k \in \mathbb{N}$, one has $\gamma^{(k)}(t_1)=0$ for all $t_1 \in [\tau,\tau+\rho(\tau)]$. The same argument also shows that for all $k \in \mathbb{N}$, one has $\gamma^{(k)}(t_1)=0$ for all $t_1 \in [\tau-\rho(\tau),\tau]$.
\end{proof}
\section{Properties of the factorized solution and the proof of Theorem \ref{Theorem 2}}
\label{Factorized}
In this section, we study the factorized solutions to $(\ref{eq:GPhierarchy})$ in more detail and we present the proof of Theorem \ref{Theorem 2}.
Let $I=[t_0-T,t_0+T]$. Let us consider $\phi$ such that:
\begin{equation}
\label{eq:NLS1}
\begin{cases}
i\partial_t \phi + \Delta \phi = |\phi|^2 \phi,\,\mbox{on}\,\Lambda \times I\\
\phi |_{t=t_0}=\Phi.
\end{cases}
\end{equation}
We are also assuming that the length $|I|$ of the interval $I$ satisfies:
\begin{equation}
\label{eq:Ileq1}
|I| \leq 1.
\end{equation}
The factorized solution $\Gamma(t)=(\gamma^{(k)}(t))$ is given by: $\gamma^{(k)}:=|\phi \rangle \langle \phi|^{\otimes k}(t)$.
Hence:
\begin{equation}
\label{eq:factorized}
\gamma^{(k)}(t,\vec{x_k};\vec{x'_k})=\prod_{j=1}^{k} \phi(t,x_j) \overline{\phi(t,x_j')}.
\end{equation}
We remark that the proof of the analogue of Theorem \ref{Theorem 2} in the non-periodic setting in the work of Klainerman and Machedon \cite{KM} is based on the use of Strichartz estimates on $\mathbb{R}^3$. Since we are working on $\mathbb{T}^3$, we cannot use all of the Strichartz estimates
which one needs to use on $\mathbb{R}^3$. One can apply the periodic Strichartz estimates as in \cite{B}, but in doing so, one ends up losing a certain number of derivatives which, in turn, require additional regularity on the initial data. Instead of taking this approach, we use variants of the function spaces $U$ and $V$, defined in Section \ref{NotationHarmonicAnalysis}. As was noted above, these spaces are typically used in critical problems, but we will see that they will be useful in the sub-critical setting as well. Namely, we recall that the cubic NLS on $\mathbb{T}^3$ is energy sub-critical. The advantage in using these function spaces is that they do not require us to assume additional regularity on the initial data. In fact, we only need to assume that the initial data lies in $H^1(\Lambda)$, the natural space associated to the energy.
On the other hand, the function spaces that we use will have the feature that one has to look at the time of local existence which is not immediately related to a conserved quantity of the equation.

In the proof of Theorem \ref{Theorem 2}, we will need to use the following local-in-time bound:
\begin{proposition}
\label{Proposition 4}
\begin{enumerate}
\item Given $\alpha \geq 1$ and $\Phi \in H^{\alpha}(\Lambda)$ and $t_0 \in \mathbb{R}$, there exists $T=T(t_0)>0$ and a unique function $\psi \in C_u([t_0-T(t_0),t_0+T(t_0)];H^{\alpha}(\Lambda)) \cap X^{\alpha}([t_0-T(t_0),t_0+T(t_0)])$ which solves:
\begin{equation}
\label{eq:PHIInitialValue}
\begin{cases}
i\partial_t \psi + \Delta \psi = |\psi|^2\psi,\,\mbox{on}\,\, \Lambda \times [t_0-T(t_0),t_0+T(t_0)]\\
\psi|_{t=t_0}=\Phi.
\end{cases}
\end{equation}
\item Furthermore, the function $T: \mathbb{R} \rightarrow \mathbb{R}$ can be chosen to be continuous.
\item Finally, there exists a constant $C_3>0$, independent of $t_0$, such that:
\begin{equation}
\label{eq:lwp}
\|\psi\|_{X^{\alpha}([t_0-T(t_0),t_0+T(t_0)])} \leq C_3 \|\Phi\|_{H^{\alpha}_x}
\end{equation}
and
\begin{equation}
\label{eq:lwpa}
\|\psi\|_{L^{\infty}([t_0-T(t_0),t_0+T(t_0)];H^{\alpha}_x)} \leq C_3 \|\Phi\|_{H^{\alpha}_x}.
\end{equation}
\end{enumerate}
\end{proposition}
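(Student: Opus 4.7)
The plan is to solve the Duhamel integral equation
\[
\psi(t) = e^{i(t-t_0)\Delta}\Phi - i\int_{t_0}^t e^{i(t-s)\Delta}\bigl(|\psi|^2\psi\bigr)(s)\,ds
\]
by a standard contraction mapping argument carried out in the space $X^{\alpha}(I)$, where $I = [t_0-T,t_0+T]$ for $T>0$ to be chosen. The bulk of the work has been done in the preceding section: the linear estimate $(\ref{eq:linearestimate})$ controls the homogeneous piece, the Duhamel inclusion $(\ref{eq:Duhamelbound})$ controls the inhomogeneous piece in terms of its $L^1_tH^{\alpha}_x$ norm, and the cubic estimate $(\ref{eq:3linearestimate})$ together with the time-invariance built into $X^{\alpha}$ provides the needed trilinear bound. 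My only task is to assemble these three facts with a small-time factor that forces contraction.

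First I would verify that the map $\Psi(\psi)(t) := e^{i(t-t_0)\Delta}\Phi - i\int_{t_0}^t e^{i(t-s)\Delta}(|\psi|^2\psi)(s)\,ds$ sends a suitable ball in $X^{\alpha}(I)$ to itself. By $(\ref{eq:linearestimate})$ the linear part is bounded by $\|\Phi\|_{H^{\alpha}}$. For the nonlinear part, applying $(\ref{eq:Duhamelbound})$, then Cauchy--Schwarz in time to trade $L^1_t$ for $L^2_t$ picking up a factor of $(2T)^{1/2}$, and finally using $(\ref{eq:3linearestimate})$ (noting that $\|D^{\alpha}_x F\|_{L^2(I\times\Lambda)} = \|F\|_{L^2_tH^{\alpha}_x}$), I obtain
\[
\bigl\|\Psi(\psi)\bigr\|_{X^{\alpha}(I)} \le \|\Phi\|_{H^{\alpha}} + C\,T^{1/2}\|\psi\|_{X^{\alpha}(I)}^{3},
\]
and an analogous Lipschitz bound $\|\Psi(\psi)-\Psi(\tilde\psi)\|_{X^{\alpha}(I)} \lesssim T^{1/2}(\|\psi\|_{X^{\alpha}}^{2}+\|\tilde\psi\|_{X^{\alpha}}^{2})\|\psi-\tilde\psi\|_{X^{\alpha}(I)}$, using the standard factorization of $|\psi|^{2}\psi - |\tilde\psi|^{2}\tilde\psi$ and applying $(\ref{eq:3linearestimate})$ termwise. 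Choosing $T = T(\|\Phi\|_{H^{\alpha}}) > 0$ small enough that $CT^{1/2}(2\|\Phi\|_{H^{\alpha}})^{2} < 1/2$ makes $\Psi$ a contraction on the ball of radius $2\|\Phi\|_{H^{\alpha}}$ in $X^{\alpha}(I)$, yielding existence, uniqueness and the estimate $(\ref{eq:lwp})$ with $C_{3}=2$. The bound $(\ref{eq:lwpa})$ then follows from the embedding $(\ref{eq:LinftyY})$, and $\psi \in C_{u}(I;H^{\alpha}(\Lambda))$ follows because the homogeneous piece is continuous in $H^{\alpha}$ (since $\Phi\in H^{\alpha}$) and the Duhamel piece is uniformly continuous in $H^{\alpha}$ as the integral of an $L^{1}_{t}H^{\alpha}_{x}$ function.

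For the continuous dependence of $T$ on $t_{0}$, the key observation is that $T$ can be taken to be a decreasing function of $\|\Phi\|_{H^{\alpha}}$ alone, say $T = c\,\langle \|\Phi\|_{H^{\alpha}}\rangle^{-4}$ for a fixed constant $c>0$. To extend this to a function of $t_{0}$, I would first argue that the solution can be continued globally in time: for $\alpha=1$ this follows from energy conservation for the defocusing cubic NLS, and for $\alpha>1$ it follows from persistence of regularity once $H^{1}$ global existence is known. Hence for every $t_{0}\in\mathbb{R}$ there is a well-defined value $\psi(t_{0})\in H^{\alpha}(\Lambda)$ depending continuously on $t_{0}$ (by the $C_{u}$-regularity on overlapping intervals), and setting $T(t_{0}) := c\,\langle \|\psi(t_{0})\|_{H^{\alpha}}\rangle^{-4}$ produces the required continuous function.

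The main technical issue is simply verifying that the trilinear estimate $(\ref{eq:3linearestimate})$, combined with H\"older in time, yields the $T^{1/2}$ smallness factor that drives the contraction; once this is in place, everything else is routine. A minor subtlety is handling the difference $|\psi|^{2}\psi - |\tilde\psi|^{2}\tilde\psi$, for which one must expand into three trilinear pieces, each of which is controlled by the symmetric form of $(\ref{eq:3linearestimate})$ applied in Corollary~\ref{productestimate} and the $X^{\alpha}\hookrightarrow Y^{\alpha}\hookrightarrow Y^{1}$ inclusions; the computation is mechanical but worth carrying out carefully.
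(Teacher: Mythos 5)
Your argument is correct but takes a genuinely different — and for this subcritical problem, simpler — route than the paper's. The paper mimics the critical Herr--Tataru--Tzvetkov scheme: Step~1 of Appendix~A runs a small-data contraction using Lemma~\ref{trilinearlemma}, whose constant is explicitly \emph{independent} of $|I|$ so that no time smallness is available; Step~2 handles large data via a frequency splitting $u=u_{\le N}+u_{>N}$, with the contraction time $T$ forced to depend on the cutoff $N$ and hence on the \emph{profile} of $\Phi$, not just on $\|\Phi\|_{H^\alpha}$; and Step~4 is then a delicate patching construction (the functions $\tilde T_1,\tilde T_2$, the contradiction argument at $\kappa_\infty$) required precisely to manufacture a continuous $T(t_0)$ from this profile-dependent local time. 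You bypass all of that by routing the Duhamel term through $(\ref{eq:Duhamelbound})$ in $L^1_tH^\alpha_x$, applying Cauchy--Schwarz in $t$ to harvest a genuine $T^{1/2}$ factor on the whole nonlinearity, and then invoking $(\ref{eq:trilinearbound})$. This makes the contraction time a function of $\|\Phi\|_{H^\alpha}$ alone, so the continuity of $t_0\mapsto T(t_0)$ is immediate from the $C_u$-regularity of the solution, and the paper's Step~4 collapses. That said, two points you gloss over do deserve attention. First, you need global existence to make $T:\mathbb{R}\to\mathbb{R}$ well defined; your reduction to $H^1$-conservation plus persistence of regularity is correct but should be stated as a lemma rather than asserted in passing, and you should fix $T\le 1$ so the implied constant in $(\ref{eq:trilinearbound})$ (which depends on $|I|$) is uniform. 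Second, the $C_u(I;H^\alpha)$-continuity of the Duhamel integral is not automatic from ``integral of an $L^1_tH^\alpha_x$ function'': the difference quotient splits into a near-endpoint piece controlled by absolute continuity of the $L^1$ integrand and an $(e^{ih\Delta}-1)$-piece needing precompactness of the range of the partial integrals (the paper instead argues via $Y^{-\alpha}$ duality); a short argument is required either way. Modulo these small omissions, the proposal is sound and gives a cleaner proof than the one in Appendix~A.
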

Except for the step concerning the continuity of $T$, the proof of Proposition \ref{Proposition 4} is very similar to the proof of Theorem 1.1 in \cite{HTT}, with some minor modifications since we are considering the cubic nonlinearity instead of the quintic one. For completeness, we present the proof of Proposition \ref{Proposition 4} in Appendix A.
We note that, by uniqueness, on the appropriate time interval, one has that $\psi=\phi$, where $\phi$ is the solution to $(\ref{eq:PHIInitialValue})$ given in the work of Bourgain \cite{B}.
To be more precise, let us without loss of generality consider the case when $t_0=0$ and we look at non-negative times. We recall that, in \cite{B}, it is obtained that $F(u)=e^{it\Delta}\Phi-i\int_0^t e^{i(t-\tau)\Delta} |u|^2u(\tau) d\tau$ is a contraction in a space $B^{\alpha}([0,T])$, with norm given by:
\begin{equation}
\label{eq:triplenorm}
|||u|||:=\sup_{K,N} (K+1)^{\frac{1}{2}}(N+1)^{\alpha} \big(\int_{|\xi| \sim N, |\tau-|\xi|^2| \sim K} |\widehat{u}(\xi,\tau)|^2 d\xi d\tau \big)^{\frac{1}{2}}.
\end{equation}
In the corresponding class, it is shown that the local solutions are unique (we remark that the standard $X^{s,b}$ class is insufficient for uniqueness of solutions, see remark $(ii)$ at the end of section 5 of \cite{B}). The point is that we can now consider $F$ to be a contraction on: $$B^{\alpha}([0,T]) \cap C_u([0,T];H^{\alpha}(\Lambda)) \cap X^{\alpha}([0,T]).$$
(by possibly making $T$ smaller; later we put all the small time intervals together).
By the uniqueness part of Proposition \ref{Proposition 4} and by the uniqueness part of the result in \cite{B}, the fact that $\phi=\psi$ now follows. We can hence view Proposition \ref{Proposition 4} as a result which tells us additional local properties of the solutions to the NLS.
The main point is that we can now define:
\begin{equation}
\label{eq:sigmadefinition}
\sigma(t):=\min \{T(t),1\}.
\end{equation}
By construction, $\sigma$ is a positive continuous function. We can rewrite the result of the proposition as follows:
\begin{equation}
\label{eq:lwpbound}
\|\phi\|_{X^{\alpha}([t-\sigma(t),t+\sigma(t)])} \leq C_3 \|\phi(t)\|_{H^{\alpha}_x(\Lambda)}.
\end{equation}
Having summarized these facts, we can prove
Theorem \ref{Theorem 2}.
\begin{proof}(of Theorem \ref{Theorem 2}) Let $t_0 \in \mathbb{R}$ be given.
We write $I=[t_0-\sigma(t_0),t_0+\sigma(t_0)]$. We note that $I$ has length at most 2 by $(\ref{eq:sigmadefinition})$.
Let us without loss of generality consider the case when $j=1$ and when we are considering the expression $B_{1,k+1}^{+}S^{(k+1,\alpha)}\gamma^{(k+1)}$. The estimate for the other indices $j$, as well as for the terms coming from $B_{j,k+1}^{-}S^{(k+1,\alpha)}\gamma^{(k+1)}$ follow in a similar manner.
We note that by $(\ref{eq:factorized})$,
$$S^{(k,\alpha)}B_{1,k+1}^{+}\gamma^{(k+1)}(\vec{x_k};\vec{x'_k})=
[D^{\alpha}_x(|\phi|^2 \phi)](x_1) \prod_{j=2}^{k+1} (D^{\alpha}_x \phi)(x_j)
\prod_{j=1}^{k+1} (D^{\alpha}_x \bar{\phi})(x_j').$$
Consequently:
$$\int_{I} \|S^{(k,\alpha)}B_{1,k+1}^{+}\gamma^{(k+1)}(t)\|_{L^2(\Lambda^{k} \times \Lambda^{k})} dt= \|D^{\alpha}_x(|\phi|^2 \phi)\|_{L^1_t L^2_x(I \times \Lambda)} \|D^{\alpha}_x \phi\|_{L^{\infty}_t L^2_x(I \times \Lambda)}^{2k+1}$$
By $(\ref{eq:Ileq1})$ and by applying the Cauchy-Schwarz inequality in $t$, and the fact that the length of the interval $I$ is of length $O(1)$, this expression is bounded by:
\begin{equation}
\label{eq:Skalpha}
\lesssim \|D^{\alpha}_x(|\phi|^2 \phi)\|_{L^2_t L^2_x(I \times \Lambda)}
\|\phi\|_{L^{\infty}(I;H^{\alpha}_x)}^{2k+1}.
\end{equation}
By $(\ref{eq:lwpa})$ from Proposition \ref{Proposition 4}, we obtain:
\begin{equation}
\label{eq:lwp1}
\|\phi\|_{L^{\infty}(I;H^{\alpha}_x)}^{2k+1} \leq (C_3 \|\phi(t_0)\|_{H^{\alpha}_x})^{2k+1}.
\end{equation}

We need to estimate $\|D^{\alpha}_x(|\phi|^2\phi)\|_{L^2(I \times \Lambda)}.$
More generally, let us estimate the expression: $$\|D^{\alpha}_x(\phi_1 \phi_2 \phi_3)\|_{L^2(I \times \Lambda)}.$$
We dyadically decompose the factors, i.e. for $N_1 \geq N_2 \geq N_3 \geq 1$, we consider:
\begin{equation}
\label{eq:dyadicpieces}
\|D^{\alpha}_x(P_{N_1}\phi_1 P_{N_2}\phi_2 P_{N_3}\phi_3)\|_{L^2(I \times \Lambda)}.
\end{equation}
One has to consider several cases. Our argument is similar to the proof of Proposition 4.1 in \cite{HTT}.
\\
\\
\textbf{Case 1: $N_1 \gg N_2$:}
\\
\\
In this case, we estimate the expression $(\ref{eq:dyadicpieces})$ by duality. We fix $u \in L^2(I \times \Lambda)$ such that $\|u\|_{L^2(I \times \Lambda)}=1$.
We know that the Fourier transform in space of $P_{N_1}\phi_1 P_{N_2}\phi_2 P_{N_3}\phi_3$ has support in $|\xi| \sim N_1$. Hence:
$$\int_{I} \int_{\Lambda} u \, D^{\alpha}_x (P_{N_1}\phi_1 P_{N_2}\phi_2 P_{N_3} \phi_3)\, dx dt=$$
$$\int_{I} \int_{\Lambda} P_{N_0}u \, D^{\alpha}_x (P_{N_1}\phi_1 P_{N_2}\phi_2 P_{N_3} \phi_3)\, dx dt=$$
for some $N_0 \sim N_1$.
It follows that the contribution to $J:=|\int_{I} \int_{\Lambda} u D^{\alpha}_x (|\phi|^2\phi) dx dt|$ coming from this case is:
$$\lesssim \sum_{N_0,\ldots,N_3; N_0 \sim N_1, N_1 \gg N_2 \geq N_3}
|\int_{I} \int_{\Lambda} P_{N_0}u \, D^{\alpha}_x (P_{N_1}\phi_1 P_{N_2}\phi_2 P_{N_3} \phi_3)\, dx dt| $$
Which by applying the Cauchy-Schwarz inequality on $I \times \Lambda$ is:
$$\leq \sum_{N_0,\ldots,N_3; N_0 \sim N_1, N_1 \gg N_2 \geq N_3}
\|P_{N_0}u\|_{L^2(I \times \Lambda)} \|D^{\alpha}_x (P_{N_1}\phi_1 P_{N_2}\phi_2 P_{N_3} \phi_3)\|_{L^2(I \times \Lambda)}$$
By dyadic localization, we can bound this by:
$$\lesssim \sum_{N_0,\ldots,N_3; N_0 \sim N_1, N_1 \gg N_2 \geq N_3}
\|P_{N_0}u\|_{L^2(I \times \Lambda)} N_1^{\alpha} \|(P_{N_1}\phi_1 P_{N_2}\phi_2 P_{N_3} \phi_3)\|_{L^2(I \times \Lambda)}$$
By using $(\ref{eq:Star})$ and the dyadic localization, this expression is:
$$\lesssim \sum_{N_0,\ldots,N_3; N_0 \sim N_1, N_1 \gg N_2 \geq N_3}
\|P_{N_0}u\|_{L^2(I \times \Lambda)} N_1^{\alpha} N_2 N_3\, \max \, \{\frac{N_3}{N_1},\frac{1}{N_2}\}^{\delta} \prod_{j=1}^{3} \|P_{N_j}\phi_j\|_{Y^0(I)}$$
$$\lesssim \sum_{N_0,\ldots,N_3; N_0 \sim N_1, N_1 \gg N_2 \geq N_3}
\|P_{N_0}u\|_{L^2(I \times \Lambda)}\, \max \, \{\frac{N_3}{N_1},\frac{1}{N_2}\}^{\delta} \|P_{N_1}\phi_1\|_{Y^{\alpha}(I)} \|P_{N_2}\phi_2\|_{Y^1(I)} \|P_{N_3}\phi_3\|_{Y^1(I)}$$
which since $N_2 \geq N_3$ is:
$$\lesssim \sum_{N_0,\ldots,N_3; N_0 \sim N_1, N_1 \gg N_2 \geq N_3}
\|P_{N_0}u\|_{L^2(I \times \Lambda)}\, \max \, \{\big(\frac{N_3}{N_1}\big)^{\frac{1}{2}} \big(\frac{N_2}{N_1}\big)^{\frac{1}{2}},\frac{1}{N_2^{\frac{1}{2}}}\frac{1}{N_3^{\frac{1}{2}}}\}^{\delta} \cdot$$
$$\|P_{N_1}\phi_1\|_{Y^{\alpha}(I)} \|P_{N_2}\phi_2\|_{Y^1(I)} \|P_{N_3}\phi_3\|_{Y^1(I)}$$
$$\lesssim \sum_{N_0,\ldots,N_3; N_0 \sim N_1, N_1 \gg N_2 \geq N_3}
\Big(\big(\frac{N_3}{N_1}\big)^{\frac{\delta}{2}} \big(\frac{N_2}{N_1}\big)^{\frac{\delta}{2}}+
\frac{1}{N_2^{\frac{\delta}{2}}} \frac{1}{N_3^{\frac{\delta}{2}}}\Big) \cdot$$
$$\|P_{N_0}u\|_{L^2(I \times \Lambda)}\, \|P_{N_1}\phi_1\|_{Y^{\alpha}(I)}
\|P_{N_2}\phi_2\|_{Y^1(I)} \|P_{N_3}\phi_3\|_{Y^1(I)}$$
We apply the Cauchy-Schwarz inequality in $N_2$ and $N_3$ to deduce that this expression is:
$$\lesssim \sum_{N_0,N_1; N_0 \sim N_1} \|P_{N_0}u\|_{L^2(I \times \Lambda)}
\|P_{N_1}\phi_1\|_{Y^{\alpha}(I)} \|\phi_2\|_{Y^1(I)} \|\phi_3\|_{Y^1(I)}.$$
Finally, we apply the Cauchy-Schwarz inequality in $N_0 \sim N_1$ to deduce that this sum is:
$$\lesssim \|u\|_{L^2(I \times \Lambda)} \|\phi_1\|_{Y^{\alpha}(I)} \|\phi_2\|_{Y^1(I)} \|\phi_3\|_{Y^1(I)}.$$
Consequently, the contribution to $J$ from Case 1 is:
\begin{equation}
\label{eq:Case1bound}
\lesssim \|\phi_1\|_{Y^{\alpha}(I)} \|\phi_2\|_{Y^1(I)} \|\phi_3\|_{Y^1(I)}.
\end{equation}
\\
\\
\textbf{Case 2: $N_1 \sim N_2$:}
\\
\\
In this case, we again use dyadic localization and $(\ref{eq:Star})$ to deduce that:
$$\sum_{N_1,N_2,N_3;N_1 \sim N_2 \geq N_3} \|D^{\alpha}_x (P_{N_1}\phi_1 P_{N_2}\phi_2 P_{N_3}\phi_3)\|_{L^2(I \times \Lambda)}$$
$$\lesssim \sum_{N_1,N_2,N_3;N_1 \sim N_2 \geq N_3} N_1^{\alpha} N_2 N_3
\,\max\{\frac{N_3}{N_1},\frac{1}{N_2}\}^{\delta} \prod_{j=1}^{3} \|P_{N_j}\phi_j\|_{Y^0(I)}$$
$$\lesssim \sum_{N_1,N_2,N_3;N_1 \sim N_2 \geq N_3} \,\max\{\frac{N_3}{N_1},\frac{1}{N_2}\}^{\delta} \|P_{N_1}\phi_1\|_{Y^{\alpha}(I)} \|P_{N_2}\phi_2\|_{Y^1(I)} \|P_{N_3}\phi_3\|_{Y^1(I)}$$
Since $N_1 \sim N_2$, we note that $\frac{1}{N_2} \lesssim \frac{N_3}{N_1}$, hence the above sum is:
$$\lesssim \sum_{N_1,N_2,N_3;N_1 \sim N_2 \geq N_3} 
\big(\frac{N_3}{N_1}\big)^{\delta} \|P_{N_1}\phi_1\|_{Y^{\alpha}(I)} \|P_{N_2}\phi_2\|_{Y^1(I)} \|P_{N_3}\phi_3\|_{Y^1(I)}$$
By the Cauchy-Schwarz inequality in $N_3$, this expression is:
$$\lesssim \sum_{N_1,N_2;N_1 \sim N_2} \|P_{N_1}\phi_1\|_{Y^{\alpha}(I)} \|P_{N_2}\phi_2\|_{Y^1(I)} \|\phi_3\|_{Y^1(I)}$$
Furthermore, by the Cauchy-Schwarz inequality in $N_1 \sim N_2$, this is:
\begin{equation}
\label{eq:Case2bound}
\lesssim \|\phi_1\|_{Y^{\alpha}(I)}\|\phi_2\|_{Y^1(I)}\|\phi_3\|_{Y^1(I)}.
\end{equation}
\begin{remark}
\label{Complex_Conjugates}
We note that in the above proof, we could replace any of the $\phi_j$ with $\overline{\phi_j}$ in the expression $(\ref{eq:dyadicpieces})$, and we would still obtain the upper bound given by $(\ref{eq:Case2bound})$. The reason for this is that, in $(\ref{eq:Star})$, we can replace any of the $u_j$ on the left-hand side of the estimate by $\overline{u}_j$ without changing the value of the expression.
\end{remark}

From $(\ref{eq:Case1bound})$, $(\ref{eq:Case2bound})$, and Remark \ref{Complex_Conjugates}, it follows that:

\begin{equation}
\label{eq:trilinearbound}
\|D^{\alpha}_x(|\phi|^2\phi)\|_{L^2(I \times \Lambda)} \lesssim \|\phi\|_{Y^{\alpha}(I)} \|\phi\|_{Y^1(I)}^2 \lesssim \|\phi\|_{X^{\alpha}(I)}^3.
\end{equation}
In the last step, we used $(\ref{eq:spaceinclusion})$ and the fact that $\alpha \geq 1$.
By $(\ref{eq:lwpbound})$, the above expression is:
\begin{equation}
\label{eq:lwp2}
\leq (C_3 \|\phi(t_0)\|_{H^{\alpha}_x})^3,
\end{equation}
where $C_3>0$ is the constant from Proposition \ref{Proposition 4}.
By $(\ref{eq:lwp1})$ and $(\ref{eq:lwp2})$, it follows that we can take $f(t):=C \|\phi(t)\|_{H^{\alpha}_x}^2$, with appropriate $C>0$ and for $\sigma$ as defined in $(\ref{eq:sigmadefinition})$. Then $f$ is a positive and continuous function by Proposition \ref{Proposition 4} and the spacetime bound $(\ref{eq:aprioribound})$ holds for the factorized solution $\Gamma(t)=(|\phi \rangle \langle \phi|^{\otimes k}(t))$.
\end{proof}
By $(\ref{eq:trilinearbound})$, we note the following bound:
\begin{corollary}
\label{productestimate}
With notation as above:
$$\|D^{\alpha}_x(|\phi|^2\phi)\|_{L^2(I \times \Lambda)} \lesssim \|\phi\|_{Y^{\alpha}(I)} \|\phi\|_{Y^1(I)}^2 \lesssim \|\phi\|_{X^{\alpha}(I)}^3.$$
\end{corollary}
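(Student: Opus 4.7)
The statement is in fact an immediate consequence of the trilinear bound \eqref{eq:trilinearbound} that was already established inside the proof of Theorem \ref{Theorem 2}; the corollary simply isolates and records that intermediate estimate in its own right. The plan is therefore to retrace the dyadic argument of that proof, making clear that nothing in it used the factorized structure beyond the identity $\gamma^{(k)}=|\phi\rangle\langle\phi|^{\otimes k}$, and to conclude the second inequality in the statement by invoking the embedding $X^{\alpha}(I)\hookrightarrow Y^{\alpha}(I)$ from \eqref{eq:spaceinclusion}.

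Concretely, I would first write $|\phi|^2\phi$ as the product $\phi_1\phi_2\phi_3$ where two of the factors are $\phi$ and one is $\overline{\phi}$ (as permitted by Remark \ref{Complex_Conjugates}), perform a Littlewood--Paley decomposition $\phi_j=\sum_{N_j} P_{N_j}\phi_j$, and split the resulting sum into the two regimes $N_1\gg N_2\geq N_3$ and $N_1\sim N_2\geq N_3$ after ordering frequencies. In the high-low case I would argue by duality against $u\in L^2(I\times\Lambda)$: the output frequency is localized at scale $N_0\sim N_1$, so one estimates $\int uP_{N_1}\phi_1 P_{N_2}\phi_2 P_{N_3}\phi_3$ by first invoking the trilinear estimate \eqref{eq:Star} of Proposition \ref{PropositionStar} (paying $N_2N_3\max\{N_3/N_1,1/N_2\}^{\delta}$), then absorbing the derivative $N_1^{\alpha}$ into the first factor to produce $\|P_{N_1}\phi_1\|_{Y^{\alpha}(I)}\|P_{N_2}\phi_2\|_{Y^1(I)}\|P_{N_3}\phi_3\|_{Y^1(I)}$, and finally summing by Cauchy--Schwarz in $N_3$, $N_2$, and the paired $N_0\sim N_1$. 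In the high-high case $N_1\sim N_2$ the factor $1/N_2$ is dominated by $N_3/N_1$, so the gain is just $(N_3/N_1)^{\delta}$, which is summable in $N_3$ and then in the paired $N_1\sim N_2$ by Cauchy--Schwarz, yielding the same bound $\|\phi\|_{Y^{\alpha}(I)}\|\phi\|_{Y^1(I)}^2$.

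For the final inequality, I would use the chain of embeddings \eqref{eq:spaceinclusion}, which gives $X^{\alpha}(I)\hookrightarrow Y^{\alpha}(I)$ and, since $\alpha\geq 1$, also $X^{\alpha}(I)\hookrightarrow X^1(I)\hookrightarrow Y^1(I)$. Multiplying these inequalities converts $\|\phi\|_{Y^{\alpha}(I)}\|\phi\|_{Y^1(I)}^2$ into $\|\phi\|_{X^{\alpha}(I)}^3$.

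The main technical obstacle is the precise balancing in the high-high regime: one has to exploit the fact that the two losses in \eqref{eq:Star} collapse when $N_1\sim N_2$ so that the only genuine gain is $(N_3/N_1)^{\delta}$, and then verify that this single gain is still enough, after distributing one derivative $\langle N_1\rangle^{\alpha}$ to the highest frequency factor and one to the second highest (so that each of $\|P_{N_j}\phi\|_{Y^{\alpha}}$ and $\|P_{N_j}\phi\|_{Y^1}$ absorbs an $\ell^2_{N_j}$-summable weight), to run Cauchy--Schwarz over the coupled scales. Everything else is bookkeeping around the dyadic sum, and, crucially, no use of complex conjugation-sensitive structure is made, so the cases $\phi$ vs. $\overline{\phi}$ are interchangeable as noted in Remark \ref{Complex_Conjugates}.
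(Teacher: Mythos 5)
Your proposal is correct and follows essentially the same route as the paper: the corollary is simply a restatement of the trilinear bound \eqref{eq:trilinearbound} already established inside the proof of Theorem \ref{Theorem 2} via Littlewood--Paley decomposition, Proposition \ref{PropositionStar}, the case split $N_1\gg N_2$ versus $N_1\sim N_2$, duality in the high-low case, Cauchy--Schwarz over the dyadic scales, Remark \ref{Complex_Conjugates} for the conjugated factor, and finally the embeddings from \eqref{eq:spaceinclusion} together with $\alpha\geq 1$ for the passage from $Y^{\alpha}\cdot(Y^1)^2$ to $(X^{\alpha})^3$.
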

If one considers factorized solutions, we can prove that the bound in Proposition \ref{Spacetime Bound} holds when $\alpha=1$. We note that this case is easier since we have more structure than for general $k$-particle densities. It is not obvious how to apply this result to the uniqueness statement for the Gross-Pitaevskii hierarchy since the property of factorization is not preserved under the iterated Duhamel expansion $(\ref{eq:Duhamel})$.
\begin{proposition}
\label{factorized}
Suppose that $\alpha \geq 1$ and that $\phi_0 \in H^{\alpha}(\Lambda)$. Suppose that $k \in \mathbb{N}$ and consider 
$\gamma_0^{(k+1)}:=|\phi_0 \rangle \langle \phi_0|^{\otimes (k+1)}$. Let $I$ be a time interval of length $O(1)$. Then, there exists $C>0$, depending only on $\alpha$, such that for all $j \in \{1,\ldots,k+1\}$, the following bound holds:
\begin{equation}
\label{factorizedbound}
\|S^{(k,\alpha)} B_{j,k+1} \, \mathcal{U}^{(k+1)}(t) \gamma_0^{(k+1)}\|_{L^2(I \times \Lambda^k \times \Lambda^k)} \leq C \|S^{(k+1,\alpha)} \gamma_0^{(k+1)} \|_{L^2(\Lambda^{k+1} \times \Lambda^{k+1})}.
\end{equation}
\end{proposition}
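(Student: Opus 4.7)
The plan is to exploit the factorization of the data directly. Since $\gamma_0^{(k+1)} = |\phi_0\rangle\langle\phi_0|^{\otimes(k+1)}$ and $\mathcal{U}^{(k+1)}(t)$ is a tensor product of free Schr\"odinger evolutions, the density $\mathcal{U}^{(k+1)}(t)\gamma_0^{(k+1)}$ is itself factorized: writing $\phi(t,x) := (e^{it\Delta}\phi_0)(x)$, one has $\mathcal{U}^{(k+1)}(t)\gamma_0^{(k+1)} = \prod_{l=1}^{k+1}\phi(t,x_l)\overline{\phi(t,x_l')}$. Evaluating the $B^{+}_{j,k+1}$ piece by setting $x_{k+1}=x_j$ produces a factor $|\phi(t,x_j)|^2$ that fuses with the existing $\phi(t,x_j)$ factor, so that
\begin{equation*}
B^{+}_{j,k+1}\mathcal{U}^{(k+1)}(t)\gamma_0^{(k+1)} = (|\phi|^2\phi)(t,x_j)\cdot\!\!\prod_{l\neq j}\!\phi(t,x_l)\cdot\prod_{l=1}^{k}\overline{\phi(t,x_l')},
\end{equation*}
and analogously for $B^{-}_{j,k+1}$ with the cubic factor landing on $x_j'$.

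Because $S^{(k,\alpha)}$ is a product of one-variable multipliers and $(1-\Delta)^{\alpha/2}$ commutes with complex conjugation, applying $S^{(k,\alpha)}$ preserves the tensor-product structure. Fubini then separates the $L^2(I\times\Lambda^k\times\Lambda^k)$ norm into a product: $(2k-1)$ factors of $\|D^\alpha_x\phi(t)\|_{L^\infty_t L^2_x} = \|\phi_0\|_{H^\alpha}$ (by unitarity of $e^{it\Delta}$ on $H^\alpha$) times one factor of $\|D^\alpha_x(|\phi|^2\phi)\|_{L^2(I\times\Lambda)}$. Thus
\begin{equation*}
\|S^{(k,\alpha)}B^{\pm}_{j,k+1}\mathcal{U}^{(k+1)}(t)\gamma_0^{(k+1)}\|_{L^2(I\times\Lambda^k\times\Lambda^k)} = \|D^\alpha_x(|\phi|^2\phi)\|_{L^2(I\times\Lambda)}\,\|\phi_0\|_{H^\alpha}^{2k-1}.
\end{equation*}

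For the cubic factor I invoke Corollary~\ref{productestimate} (valid for any $\alpha\geq 1$ on intervals of length $O(1)$), which gives $\|D^\alpha_x(|\phi|^2\phi)\|_{L^2(I\times\Lambda)}\lesssim \|\phi\|_{X^\alpha(I)}^3$. Since $\phi = e^{it\Delta}\phi_0$ is the \emph{free} evolution, the linear estimate \eqref{eq:linearestimate} together with the definition of the restriction space yields $\|\phi\|_{X^\alpha(I)}\leq\|e^{it\Delta}\phi_0\|_{X^\alpha}\leq\|\phi_0\|_{H^\alpha}$. Combining these bounds produces
\begin{equation*}
\|S^{(k,\alpha)}B^{\pm}_{j,k+1}\mathcal{U}^{(k+1)}(t)\gamma_0^{(k+1)}\|_{L^2(I\times\Lambda^k\times\Lambda^k)} \lesssim \|\phi_0\|_{H^\alpha}^{2k+2} = \|S^{(k+1,\alpha)}\gamma_0^{(k+1)}\|_{L^2(\Lambda^{k+1}\times\Lambda^{k+1})},
\end{equation*}
and the triangle inequality $B_{j,k+1}=B^{+}_{j,k+1}-B^{-}_{j,k+1}$ then gives \eqref{factorizedbound}.

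There is no genuine obstacle here: unlike the general case of Proposition~\ref{Spacetime Bound}, the tensor structure lets every derivative sit on its own variable, so the endpoint $\alpha=1$ failure of the multiplier $I(\tau,p)$ never enters. The only point to check carefully is the bookkeeping of which $\phi$-factors merge with the delta and which do not (hence the $|\phi|^2\phi$ appearing at $x_j$ versus $x_j'$), and the observation that the trilinear bound from Corollary~\ref{productestimate} is precisely the ingredient that replaces the spacetime multiplier bound in the factorized setting.
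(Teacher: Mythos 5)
Your argument is correct and follows essentially the same route as the paper: use the tensor-product structure to peel off $2k-1$ linear factors, each equal to $\|\phi_0\|_{H^\alpha}$ by unitarity of the free propagator, and bound the remaining cubic factor $\|D^\alpha_x(|\phi|^2\phi)\|_{L^2(I\times\Lambda)}$ via Corollary~\ref{productestimate} together with the linear estimate \eqref{eq:linearestimate}. The paper phrases this by recalling the computation from the proof of Theorem~\ref{Theorem 2} and keeps the two $H^1$ factors from the trilinear bound explicit before invoking $\alpha\geq 1$ to absorb them into $H^\alpha$, whereas you bound everything by $\|\phi_0\|_{H^\alpha}$ from the start, but the two are the same argument.
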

\begin{proof}
As in previous arguments, it suffices to prove the claim for $B^{+}_{j,k+1}$ instead of $B_{j,k+1}$. In addition, one can assume, without loss of generality that $j=1$. As was noted in the proof of Theorem \ref{Theorem 2}:
\begin{equation}
\label{factorized1}
\|S^{(k,\alpha)} B^{+}_{1,k+1} \, \mathcal{U}^{(k+1)}(t) \gamma_0^{(k+1)}\|_{L^2(I \times \Lambda^k \times \Lambda^k)} \leq C \|\phi_0\|_{H^{\alpha}}^{2k} \|\phi_0\|_{H^1}^2.
\end{equation}
On the other hand, we also know that:
\begin{equation}
\label{factorized2}
\|S^{(k+1,\alpha)} \gamma_0^{(k+1)}\|_{L^2(\Lambda^{k+1} \times \Lambda^{k+1})}=\|\phi_0\|_{H^{\alpha}}^{2k+2}.
\end{equation}
Since $\alpha \geq 1$, \eqref{factorized1} and \eqref{factorized2} imply the claim.
\end{proof}
\section{Appendix A: Proof of Proposition \ref{Proposition 4}}
\label{AppendixA}
In this Appendix, we prove the local-in-time result given in Proposition \ref{Proposition 4}. For simplicity of notation, let us assume without loss of generality that $t_0=0$. We just need to keep in mind that the implied constants depend on conserved quantities for the NLS (i.e. they are functions of the $H^1$ norm of the solution).

Let us write the Duhamel term as:
\begin{equation}
\label{eq:DuhameltermI}
\mathcal{D}(f)(t):=\int_0^t e^{i(t-\tau)\Delta}f(\tau) d\tau.
\end{equation}
A key tool in our proof will be the following:
\begin{lemma}
\label{trilinearlemma}
For $\alpha \geq 1$, and for $I \subseteq \mathbb{R}$, a time interval of length $O(1)$, one has:
\begin{equation}
\label{eq:trilinearlemmabound}
\|\mathcal{D}(\prod_{k=1}^{3} \widetilde{u}_k)\|_{X^{\alpha}(I)} \lesssim \sum_{j=1}^{3}
\|u_j\|_{X^{\alpha}(I)} \prod_{k=1;k \neq j}^{3} \|u_k\|_{X^1(I)},
\end{equation}
where $\widetilde{u}_k$ denotes either $u_k$ or $\bar{u}_k$.
The implied constant does not depend on the length of the interval.
\end{lemma}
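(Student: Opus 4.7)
The plan is to combine the Duhamel bound \eqref{eq:Duhamelbound} in the $X^{\alpha}$-space with a polarized version of the trilinear $L^2$-estimate \eqref{eq:trilinearbound} established in the course of proving Theorem \ref{Theorem 2}. First, after translating time if necessary so that the interval $I$ is based at zero, I would apply \eqref{eq:Duhamelbound} to obtain
$$\|\mathcal{D}(\widetilde u_1 \widetilde u_2 \widetilde u_3)\|_{X^{\alpha}(I)} \lesssim \|\widetilde u_1 \widetilde u_2 \widetilde u_3\|_{L^1(I; H^{\alpha}(\Lambda))}.$$
Since $|I| = O(1)$, the Cauchy-Schwarz inequality in time gives
$$\|\widetilde u_1 \widetilde u_2 \widetilde u_3\|_{L^1(I; H^{\alpha}(\Lambda))} \lesssim \|D^{\alpha}_x(\widetilde u_1 \widetilde u_2 \widetilde u_3)\|_{L^2(I \times \Lambda)},$$
with implied constant independent of the precise length of $I$.

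Second, I would establish the trilinear $L^2$-estimate
$$\|D^{\alpha}_x(\widetilde u_1 \widetilde u_2 \widetilde u_3)\|_{L^2(I \times \Lambda)} \lesssim \sum_{j=1}^{3} \|u_j\|_{Y^{\alpha}(I)} \prod_{k \neq j} \|u_k\|_{Y^1(I)},$$
which is the natural polarization of Corollary \ref{productestimate} to three possibly distinct functions that may carry complex conjugates. The proof is a direct adaptation of the dyadic case analysis carried out for Theorem \ref{Theorem 2}. One decomposes each factor via Littlewood-Paley projections $P_{N_k}u_k$; by spatial Fourier support the output has frequency at most comparable to $\max_k N_k$. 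In the case $N_1 \gg N_2 \geq N_3$, where $N_j$ is relabeled to be the frequency of the $j$-th factor ordered decreasingly, the $\alpha$-derivative is placed on the factor with frequency $N_1$ and one applies Proposition \ref{PropositionStar} followed by Cauchy-Schwarz in the dyadic indices exactly as in Case 1 of Theorem \ref{Theorem 2}; in the case $N_1 \sim N_2 \geq N_3$ one proceeds as in Case 2. The symmetric sum over $j$ on the right-hand side arises because the factor that receives the $\alpha$-derivative may be any of the three $u_k$'s, depending on which dyadic piece dominates. As recorded in Remark \ref{Complex_Conjugates}, Proposition \ref{PropositionStar} is unaffected by complex conjugations on any subset of its three arguments, so the estimate does not depend on which $\widetilde u_k$'s are conjugated.

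Finally, the inclusion $X^{\alpha}(I) \hookrightarrow Y^{\alpha}(I)$ from \eqref{eq:spaceinclusion}, applied with both exponents $\alpha$ and $1$, upgrades the $Y$-norms appearing on the right-hand side to $X$-norms, yielding the desired bound \eqref{eq:trilinearlemmabound}. The main (and essentially only) technical point to verify carefully is that the dyadic case analysis of Theorem \ref{Theorem 2}, which was phrased for a single function $\phi$ fed three times into the cubic nonlinearity, extends cleanly to three possibly distinct factors with arbitrary complex conjugations; this is mostly bookkeeping, since the derivative-placement and summation structure is already present in Cases 1 and 2 of that proof, and the needed symmetrization over the three factors is precisely what produces the sum on the right-hand side of \eqref{eq:trilinearlemmabound}.
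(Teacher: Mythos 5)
Your argument is correct, but it is structured differently from the paper's. The paper proves Lemma~\ref{trilinearlemma} in one pass by the $Y^{-\alpha}$-duality characterization of the $X^{\alpha}$-norm of a Duhamel integral (as in Proposition 2.11 and Proposition 4.1 of \cite{HTT}): it fixes $u_0 \in Y^{-\alpha}(I)$ and bounds the resulting quadrilinear form
$\left|\int_{I\times\Lambda} \prod_{k=0}^{3}\widetilde{u}_k\,dx\,dt\right|$
directly, dyadically decomposing all four factors and applying Proposition~\ref{PropositionStar} with the case split $N_2 \leq N_0 \sim N_1$ versus $N_0 \leq N_2 \sim N_1$. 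Your route instead chains three steps: the $L^1 H^{\alpha}\to X^{\alpha}$ Duhamel bound \eqref{eq:Duhamelbound}, Cauchy--Schwarz in time (which is where a factor $|I|^{1/2}\lesssim 1$ enters), and a polarized version of the trilinear $L^2$ estimate from the proof of Theorem~\ref{Theorem 2}. Both approaches rest on the same engine --- Proposition~\ref{PropositionStar} plus a dyadic case analysis --- and both ultimately absorb an $O(1)$ dependence on the interval length (the paper's arises when $\|u_1\|_{L^2_tH^{\alpha}_x}$ is replaced by $\|u_1\|_{L^{\infty}_tH^{\alpha}_x}$; yours from Cauchy--Schwarz). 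Your approach is more modular, reusing the $L^2$ trilinear bound already established, whereas the paper's $Y^{-\alpha}$-duality argument is a self-contained single estimate. One point you handle correctly but which deserves emphasis: for three \emph{distinct} factors one cannot assume WLOG which $u_k$ sits at the highest dyadic scale, so the dyadic cases of Theorem~\ref{Theorem 2} must be symmetrized over which factor receives the $\alpha$ derivative --- this is exactly what produces the sum over $j$ on the right side of \eqref{eq:trilinearlemmabound}, and is also implicit in how the paper arrives at the same symmetric quadrilinear bound.
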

We note that there is no power of the length of the time interval in the above estimate. The quintilinear version of Lemma \ref{trilinearlemma} was proved as Proposition 4.1 in \cite{HTT}. The proof of Lemma \ref{trilinearlemma} proceeds in a similar way, and is based on a duality argument and the use of the good decay factor of $\max \Big\{\frac{N_3}{N_1}, \frac{1}{N_2}\Big\}^{\delta}$ obtained in $(\ref{eq:Star})$. We will omit the details and refer the reader to \cite{HTT}.

We now prove Proposition \ref{Proposition 4}.
\begin{proof}(of Proposition \ref{Proposition 4})
It suffices to consider non-zero solutions $u$, since the claim holds in the case of the zero solution.
By time translation, let us consider the case $t_0=0$.
\\
\\
\textbf{1: Small data:}
\\
\\
Since $\alpha \geq 1$, it follows from Lemma \ref{trilinearlemma} that:
$$\|\mathcal{D}(|u|^2u-|v|^2v)\|_{X^{\alpha}([0,T])} \leq C (\|u\|_{X^{\alpha}([0,T])}^2+\|v\|_{X^{\alpha}([0,T])}^2)\|u-v\|_{X^{\alpha}([0,T])},$$
whenever $u,v \in X^{\alpha}([0,T])$.
Given $\epsilon,\delta>0$, we consider:
$$B_{\epsilon}:=\{f \in H^{\alpha}(\Lambda); \|f\|_{H^{\alpha}} \leq \epsilon \}.$$
$$D_{\delta}:=\{u \in X^{\alpha}([0,T]) \cap C_u([0,T];H^{\alpha}(\Lambda)); \|u\|_{X^{\alpha}([0,T])} \leq \delta\}.$$
We note that $D_{\delta}$ is complete with respect to the metric in $X^{\alpha}([0,T])$.
Let us denote:
$$L(f):=e^{it\Delta}f,NL(u):=-i\mathcal{D}(|u|^2u).$$
For fixed $f \in B_{\epsilon}$, we want to solve:
\begin{equation}
\label{eq:fixedpoint1}
u=F(u):=L(f)+NL(u).
\end{equation}
by using the contraction mapping principle in $D_{\delta}$, with $f \in B_{\epsilon}$.
By $(\ref{eq:linearestimate})$ and Lemma \ref{trilinearlemma}, it follows that, for $u \in D_{\delta}$:
$$\|L(f)+NL(u)\|_{X^{\alpha}([0,T])} \leq \|e^{it\Delta}f\|_{X^{\alpha}([0,T])}+\|\mathcal{D}(|u|^2u)\|_{X^{\alpha}([0,T])}
\leq \|f\|_{H^{\alpha}(\Lambda)} + c\|u\|_{X^{\alpha}([0,T])}^3$$
\begin{equation}
\label{eq:F1}
\leq \epsilon + c\delta^3 \leq \delta
\end{equation}
if we take:
$$\epsilon:=\frac{\delta}{2}, \delta:=\frac{1}{(8c)^{\frac{1}{2}}}.$$
We also need to show that $F$ maps $D_{\delta}$ into $C_u([0,T];H^{\alpha}(\Lambda))$.
Let us first note that, since $f \in H^{\alpha}(\lambda)$:
\begin{equation}
\label{eq:continuity1}
L(f)=e^{it\Delta}f \in C_u([0,T];H^{\alpha}(\Lambda)).
\end{equation}
We now show that $NL(u) \in C_u([0,T];H^{\alpha}(\Lambda))$.
For $\kappa>0$, one has:
$$NL(u)(t+\kappa)-NL(u)(t)= \int_{0}^{t+\kappa} e^{i(t+\kappa-\tau)\Delta} (|u|^2u)(\tau) d\tau- \int_{0}^{t} e^{i(t-\tau)\Delta}(|u|^2u)(\tau) d\tau=$$
$$= \int_{0}^{\kappa} e^{i(t+\kappa-\tau)\Delta} (|u|^2u)(\tau) d\tau
+ \int_{0}^{t} e^{i(t-\tau)\Delta} \big((|u|^2u)(\tau+\kappa)-(|u|^2u)(\tau)\big) d\tau$$
Now, by using the Cauchy-Schwarz inequality in $t$ and the unitarity of $e^{it\Delta}$ on $H^{\alpha}(\Lambda)$, we note that for $u \in D_{\delta}$:
$$\|\int_{0}^{\kappa} e^{i(t+\kappa-\tau)\Delta} (|u|^2u)(\tau) d\tau\|_{L^{\infty}([0,T];H^{\alpha}(\Lambda))} \leq \kappa^{\frac{1}{2}}
\|D^{\alpha}(|u|^2u)\|_{L^2([0,T];L^2(\Lambda))}$$
which by $(\ref{eq:trilinearbound})$ is:
\begin{equation}
\label{eq:continuity2}
\lesssim \kappa^{\frac{1}{2}} \|u\|_{X^{\alpha}([0,T])}^3 \leq \kappa^{\frac{1}{2}} \delta^3.
\end{equation}
Furthermore,
$$\|\int_{0}^{t} e^{i(t-\tau)\Delta} \big((|u|^2u)(\tau+\kappa)-(|u|^2u)(\tau)\big) d\tau\|_{L^{\infty}([0,T];H^{\alpha}(\Lambda))}
\lesssim$$
$$\|\int_{0}^{t} e^{i(t-\tau)\Delta} \big((|u|^2u)(\tau+\kappa)-(|u|^2u)(\tau)\big) d\tau\|_{X^{\alpha}([0,T])}$$
which by Lemma \ref{trilinearlemma} is:
\begin{equation}
\label{eq:continuity3}
\lesssim \|u\|_{X^{\alpha}([0,T])}^2 \|u(\cdot+\kappa)-u\|_{X^{\alpha}([0,T])}.
\end{equation}
Here $\cdot + \kappa$ denotes translation in time. We now show that:
\begin{equation}
\label{eq:continuity4}
\|u(\cdot+\kappa)-u\|_{X^{\alpha}([0,T])} \rightarrow 0,\,\mbox{as}\,\kappa \rightarrow 0.
\end{equation}
Let us fix $v \in Y^{-\alpha}([0,T])$ with $\|v\|_{Y^{-\alpha}([0,T])} \leq 1$.
We note that:
$$\big|\int_{0}^{T} \int_{\Lambda} (u(t+\kappa,x)-u(t,x)) \overline{v(t,x)} dx dt \big|$$
which by using the Cauchy-Schwarz inequality, H\"{o}lder's inequality, and $(\ref{eq:LinftyY})$ is:
$$\lesssim \|u(\cdot+\kappa)-u\|_{L^{\infty}([0,T];H^{\alpha}(\Lambda))}
\|v\|_{L^1([0,T];H^{-\alpha}(\Lambda))}
\leq T \|u(\cdot+\kappa)-u\|_{L^{\infty}([0,T];H^{\alpha}(\Lambda))}
\|v\|_{L^{\infty}([0,T];H^{-\alpha}(\Lambda))}$$
$$\lesssim T \|u(\cdot+\kappa)-u\|_{L^{\infty}([0,T];H^{\alpha}(\Lambda))}
\|v\|_{Y^{-\alpha}([0,T])}$$
$$\leq T \|u(\cdot+\kappa)-u\|_{L^{\infty}([0,T];H^{\alpha}(\Lambda))} \rightarrow 0,\,\mbox{as}\,\kappa \rightarrow 0.$$
since, by assumption $u \in C_u([0,T];H^{-\alpha}(\Lambda))$.
$(\ref{eq:continuity4})$ now follows by $(\ref{eq:duality})$.
\footnote{Strictly speaking, here we are also using the fact that $u$ can be written as the restriction to $[0,T]$ of a function in $C_u(J;H^{\alpha}(\Lambda))$ for some interval $J$ which properly contains $[0,T]$, for example by letting it be equal to the constant at the appropriate endpoints outside of $[0,T]$.}
Combining $(\ref{eq:continuity2})$, $(\ref{eq:continuity3})$, and $(\ref{eq:continuity4})$, it follows that $\|NL(u)(t+\kappa)-NL(u)(t)\|_{L^{\infty}([0,T];H^{\alpha}(\Lambda)} \rightarrow 0$ as $\kappa \rightarrow 0+$. An analogous proof shows that $\|NL(u)(t+\kappa)-NL(u)(t)\|_{L^{\infty}([0,T];H^{\alpha}(\Lambda)} \rightarrow 0$ as $\kappa \rightarrow 0-$. We use this fact together with $(\ref{eq:continuity1})$ to deduce that:
\begin{equation}
\label{eq:F2}
F:D_{\delta} \rightarrow C_u([0,T];H^{\alpha}(\Lambda)).
\end{equation}
From $(\ref{eq:F1})$ and $(\ref{eq:F2})$, it follows that $F$ maps $D_{\delta}$ into itself.
Furthermore:
$$\|NL(u)-NL(v)\|_{X^{\alpha}([0,T])} \leq c (\|u\|_{X^{\alpha}([0,T])}+\|v\|_{X^{\alpha}([0,T])})^2\|u-v\|_{X^{\alpha}([0,T])} \leq c 4\delta^2 \|u-v\|_{X^{\alpha}([0,T])}$$
$$\leq 4c \cdot \frac{1}{8c} \|u-v\|_{X^{\alpha}([0,T])}
\leq \frac{1}{2}\|u-v\|_{X^{\alpha}([0,T])}.$$
Hence, we obtain a fixed point for $F(u)$ in $D_{\delta}$.
\\
\\
\textbf{2: Large data:}
\\
\\
Let $r>0$ and $N \geq 1$ be given. We suppose that $0<\epsilon \leq r_0 \leq r$ and $0 <\delta \leq R$. Here, $r_0>0$ is a fixed, small constant, and $\epsilon, \delta$ are as before.
We consider the sets:
$$B_{\epsilon,r}:=\{f \in H^{\alpha}(\Lambda); \|f_{>N}\|_{H^{\alpha}(\Lambda)} \leq \epsilon, \|f\|_{H^{\alpha}(\Lambda)} \leq r\}$$
and
$$D_{\delta,R,T}:=\{u \in X^{\alpha}([0,T]) \cap C_u([0,T];H^{\alpha}(\Lambda)); \|u_{>N}\|_{X^{\alpha}([0,T])} \leq \delta, \|u\|_{X^{\alpha}([0,T])} \leq R\}.$$
Let us consider $f \in B_{\epsilon,r}$. Then:
We note that $D_{\delta,R,T}$ is a closed subset of a Banach space and hence is also a Banach space.
$$\|[L(f)+NL(u)]_{>N}\|_{X^{\alpha}([0,T])}
\leq \|[L(f)]_{>N}\|_{X^{\alpha}([0,T])} + \|[NL(u)]_{>N}\|_{X^{\alpha}([0,T])}$$
\begin{equation}
\label{eq:highfreqpart}
\leq \epsilon + \|[NL(u)]_{>N}\|_{X^{\alpha}([0,T])}.
\end{equation}
Here, we used $(\ref{eq:linearestimate})$.

We write $|u|^2u=|u_{\leq N}+u_{>N}|^2(u_{\leq N}+u_{>N})$, and so:
$NL(u)=NL_1(u_{\leq N},u_{>N})+NL_2(u_{\leq N},u_{>N})$, where $NL_1$ is at least quadratic in $u_{>N}$, and $NL_2$ is quadratic in $u_{\leq N}$.
By using Lemma \ref{trilinearlemma}, it follows that:
\begin{equation}
\label{eq:NL1}
\|NL_1(u_{\leq N},u_{>N})\|_{X^{\alpha}([0,T])} \leq c \delta^2 R.
\end{equation}
In estimating $NL_2$, we use $(\ref{eq:Duhamelbound})$ and we use the Fractional Leibniz rule to consider separately when the $\alpha$ derivatives fall on the high frequencies and on the low frequencies. From H\"{o}lder's inequality, we deduce that:
$$\|NL_2(u_{\leq N},u_{>N})\|_{X^{\alpha}([0,T])} \leq c_1 \|u\|_{L^{\infty}([0,T];H^{\alpha}(\Lambda))} \|u_{\leq N}\|_{L^2([0,T];L^{\infty}(\Lambda))}^2$$
$$+c_1 N^{\alpha} \|u\|_{L^{\infty}([0,T];L^6(\Lambda))} \|u_{\leq N}\|_{L^2([0,T];L^6(\Lambda))}^2.$$
Let us note that by the Cauchy-Schwarz inequality, we obtain:
\begin{equation}
\label{eq:Sobolevembedding1}
\|g_M\|_{L^{\infty}(\Lambda)} \lesssim M^{\frac{3}{2}}\|g_M\|_{L^2(\Lambda)} \sim M^{\frac{1}{2}} \|g_M\|_{H^1(\Lambda)}.
\end{equation}
Moreover, by using the fact that $\|g_M\|_{L^6(\Lambda)} \leq \|g_M\|_{L^2(\Lambda)}^{\frac{1}{3}} \|g_M\|_{L^{\infty}(\Lambda)}^{\frac{2}{3}}$, it follows that:
\begin{equation}
\label{eq:Sobolevembedding2}
\|g_M\|_{L^6(\Lambda)} \lesssim M \|g_M\|_{L^2(\Lambda)} \sim \|g_M\|_{H^1(\Lambda)}.
\end{equation}
We use the Cauchy-Schwarz inequality in time and $(\ref{eq:Sobolevembedding1})$ to deduce that:
$$\|u_{\leq N}\|_{L^2([0,T];L^{\infty}(\Lambda))} \leq T^{\frac{1}{2}} \|u_{\leq N}\|_{L^{\infty}([0,T];L^{\infty}(\Lambda))} \lesssim T^{\frac{1}{2}} \sum_{M \in 2^{\mathbb{N}}; M \leq N} \|u_{M}\|_{L^{\infty}([0,T];L^{\infty}(\Lambda))}$$
\begin{equation}
\label{eq:lowfrequency2}
\lesssim T^{\frac{1}{2}} \sum_{M \in 2^{\mathbb{N}}; M \leq N} M^{\frac{1}{2}} \|u_M\|_{L^{\infty}([0,T];H^1(\Lambda))} \lesssim T^{\frac{1}{2}}N^{\frac{1}{2}}\|u_{\leq N}\|_{X^1([0,T])}.
\end{equation}
For the last inequality, we use the fact that $\sum_{M \in 2^{\mathbb{N}}; M \leq N} M^{\frac{1}{2}} \lesssim N^{\frac{1}{2}}.$
Similarly, we can use the Cauchy-Schwarz inequality in time and $(\ref{eq:Sobolevembedding2})$ to deduce that:
$$\|u_{\leq N}\|_{L^2([0,T];L^6(\Lambda))} \leq T^{\frac{1}{2}} \|u_{\leq N}\|_{L^{\infty}([0,T];L^6(\Lambda))}
\leq T^{\frac{1}{2}} \sum_{M \in 2^{\mathbb{N}}; M \leq N} \|u_{M}\|_{L^{\infty}([0,T];L^6(\Lambda))}
$$
$$\lesssim T^{\frac{1}{2}} \sum_{M \in 2^{\mathbb{N}}; M \leq N} \|u_M\|_{L^{\infty}([0,T];H^1(\Lambda))} \lesssim T^{\frac{1}{2}} \sum_{M \in 2^{\mathbb{N}}; M \leq N} \|u\|_{L^{\infty}([0,T];H^1(\Lambda))}$$
\begin{equation}
\label{eq:lowfrequency1}
\lesssim
T^{\frac{1}{2}} N^{0+} \|u\|_{L^{\infty}([0,T];H^1(\Lambda))} \lesssim T^{\frac{1}{2}}N^{0+} \|u\|_{X^1([0,T])}.
\end{equation}
Here, we have used the fact that the number of dyadic integers $M$ such that $M \leq N$ is $O(log N)=O(N^{0+})$.

On the other hand, by similar arguments:

By using $(\ref{eq:lowfrequency1})$ and $(\ref{eq:lowfrequency2})$, it follows that:
\begin{equation}
\label{eq:NL2}
\|NL_2(u_{\leq N},u_{>N})\|_{X^{\alpha}([0,T])} \leq c_2 N^{\alpha+}TR^3.
\end{equation}
We argue analogously as in the proof of $(\ref{eq:NL1})$ to deduce that:
\begin{equation}
\label{eq:NL1difference}
\|NL_1(u_{\leq N},u_{>N})_N-NL_1(v_{\leq N},v_{>N})\|_{X^{\alpha}([0,T])}
\leq c_3 \delta R \|u-v\|_{X^{\alpha}([0,T])}.
\end{equation}
Here, it is crucial to use the fact that:
$u_{\geq N}-v_{\geq N}=(u-v)_{\geq N}$ and $u_{>N}-v_{>N}=(u-v)_{>N}$.
Similarly, we use an analogous argument as for $(\ref{eq:NL2})$ to deduce that:
\begin{equation}
\label{eq:NL1difference}
\|NL_2(u_{\leq N},u_{>N})_N-NL_2(v_{\leq N},v_{>N})\|_{X^{\alpha}([0,T])}
\leq c_4 N^{\alpha+}TR^2 \|u-v\|_{X^{\alpha}([0,T])}.
\end{equation}
Let us now take $C$ to be the maximum of $c,c_j$. For fixed $f \in B_{\epsilon,r}$, we consider the map $F(u):=L(f)+NL(u)$.
For fixed $r,N>0$, we take the following choice of parameters:
\begin{equation}
\label{eq:parameterchoice}
R:=2r, \delta:=\frac{1}{A_1R}, \epsilon:=\frac{\delta}{2}, T:=\frac{\delta}{A_2N^{\alpha+}R^3}.
\end{equation}
The constants $A_1,A_2>0$ are chosen such that:
\begin{equation}
\label{eq:Ajchoice1}
\frac{C}{A_1}+\frac{C}{A_2} \leq \frac{1}{2}
\end{equation}
\begin{equation}
\label{eq:Ajchoice2}
\frac{C}{4 A_1^2 r_0^2}+\frac{C}{4 A_1 A_2 r_0^2} \leq \frac{1}{2}
\end{equation}
\begin{equation}
\label{eq:Ajchoice3}
\frac{C}{A_1}+\frac{C}{4 A_1 A_2 r_0^2} \leq \frac{1}{2}.
\end{equation}
Here, $C$ and $r_0$ are the positive constants defined above.

By using the previous choice of parameters, it follows that, for $u,v \in B_{\delta,R,T}$, one has:
\begin{enumerate}
\item (The high frequency part of $F(u)$ satisfies the appropriate bound)
$$\|[F(u)]_{>N}\|_{X^{\alpha}([0,T])} \leq \|[L(f)]_{>N}\|_{X^{\alpha}([0,T])}+
\|[NL(u)\|_{X^{\alpha}([0,T])} \leq \epsilon + C\delta^2R + CN^{\alpha+}TR^3 \leq \delta.$$
Here, we used $(\ref{eq:highfreqpart})$, as well as $(\ref{eq:NL1})$ and $(\ref{eq:NL2})$.
\item ($F(u)$ satisfies the appropriate bound)
$$\|F(u)\|_{X^{\alpha}([0,T])} \leq \|L(f)\|_{X^{\alpha}([0,T])} + \|NL(u)\|_{X^{\alpha}([0,T])} \leq r + C\delta^2R + CN^{\alpha+}TR^3 \leq R.$$
\item Arguing as in the small data case, we show that $F$ maps $B_{\delta,R,T}$ into $C_u([0,T];H^{\alpha}(\Lambda))$.
\item ($F$ is a contraction on $B_{\delta,R,T}$)
$$\|F(u)-F(v)\|_{X^{\alpha}([0,T])} = \|NL(u)-NL(v)\|_{X^{\alpha}([0,T])}
\leq (C \delta R + C N^{\alpha+}TR^2)\|u-v\|_{X^{\alpha}([0,T])} \leq $$
$$\leq \frac{1}{2}\|u-v\|_{X^{\alpha}([0,T])}.$$
\end{enumerate}
Therefore, $F:D_{\delta,R,T} \rightarrow D_{\delta,R,T}$ is a contraction, and hence it has a unique fixed point in $D_{\delta,R,T}$, since $D_{\delta,R,T}$ is a Banach space.
\\
\\
\textbf{3: Uniqueness in $X^{\alpha}([0,T]) \cap C_u([0,T];H^{\alpha}(\Lambda))$}
\\
\\
Suppose that $u,v \in X^{\alpha}([0,T]) \cap C_u([0,T];H^{\alpha}(\Lambda))$ solve:
\begin{equation}
\label{eq:NLS1}
\begin{cases}
iu_t + \Delta u = |u|^2u,\,\mbox{on}\, \Lambda \times [0,T]\\
u |_{t=0}=\phi_0 \in H^{\alpha}(\Lambda).
\end{cases}
\end{equation}
\begin{equation}
\label{eq:NLS2}
\begin{cases}
iv_t + \Delta v = |v|^2v,\,\mbox{on}\, \Lambda \times [0,T]\\
u |_{t=0}=\phi_0 \in H^{\alpha}(\Lambda).
\end{cases}
\end{equation}
We let $r \geq r_0>0$ be given. We then define $\epsilon, \delta, R$ as in $(\ref{eq:parameterchoice})$. Furthermore we choose $N \geq 1$ sufficiently large such that $\phi_0 \in B_{\epsilon,r}$ and $u,v \in D_{\delta,R}$.
We then make $T$ possibly smaller such that $T \leq \frac{\delta}{A_2N^{\alpha+}R^3}$. By Part 2, it follows that $u=v$ on the time interval $[0,T]$. Uniqueness now follows.
\\
\\
\textbf{4: Construction of $T$ as a function of time:}
\\
\\
We note from earlier that there exists $\tilde{T}_1: \mathbb{R} \rightarrow \mathbb{R}$, which is positive such that for all $t \in \mathbb{R}$, one has:
\begin{equation}
\label{eq:Ttilde1}
\|u\|_{X^{\alpha}([t-\tilde{T}_1(t),t+\tilde{T}_1(t)])} \leq 2\|u(t)\|_{H^{\alpha}(\Lambda)}.
\end{equation}
By construction, $\tilde{T}$ depends on the frequency parameter $N$ given in Step 2, which we cannot control by the energy. We also note that $\tilde{T}_1$ is not necessarily continuous.

We first observe that we can modify $\tilde{T}_1$ so that, locally in $t$, it is uniformly bounded away from zero and such that an inequality similar to $(\ref{eq:Ttilde1})$ holds. More precisely, we show that there exists $\tilde{T}_2 : \mathbb{R} \rightarrow \mathbb{R}$ such that, for all $t \in \mathbb{R}$:
\begin{equation}
\label{eq:Ttilde2}
\|u\|_{X^{\alpha}([t-\tilde{T}_2(t),t+\tilde{T}_2(t)])} \leq 3 \|u(t)\|_{H^{\alpha}(\Lambda)}.
\end{equation}
\begin{equation}
\label{eq:deltar}
\mbox{There exist $\delta(t), r(t)>0$ such that $\tilde{T}_2 > r(t)$ on $[t-\delta(t),t+\delta(t)]$}.
\end{equation}
We construct $\tilde{T}_2$ from $\tilde{T}_1$.
If $\tilde{T}_1$ has no zero limit points, i.e. there does not exist $t_{\infty} \in \mathbb{R}$ and a sequence $t_n \rightarrow t_{\infty}$ in $\mathbb{R}$ such that $\tilde{T}_1(t_n) \rightarrow 0$, we can just take $\tilde{T}_2:=\tilde{T}_1$. If, on the other hand, there exists such a $t_{\infty}$, we note by the continuity of $t \mapsto \|u(t)\|_{H^{\alpha}(\Lambda)}$ that there exists $\tau_{\infty}>0$ such that:
\begin{equation}
\label{eq:Sobolevcontinuity}
\|u(t)\|_{H^{\alpha}(\Lambda)} \geq \frac{2}{3}\|u(t_{\infty})\|_{H^{\alpha}(\Lambda)}, \mbox{for}\,t \in [t_{\infty}-\tau_{\infty},t_{\infty}+\tau_{\infty}].
\end{equation}
We can choose $\tau_{\infty}$ so that:
\begin{equation}
\label{eq:tau0cond}
\tau_{\infty}<\tilde{T}_1(t_{\infty}).
\end{equation}
By construction of $\tilde{T}_1$, we note that, for $t \in [t_{\infty}-\tau_{\infty},t_{\infty}+\tau_{\infty}]$, one has:
$$[t-(\tilde{T}_1(t_{\infty})-\tau_{\infty}),t+(\tilde{T}_1(t_{\infty})-\tau_{\infty})] \subseteq [t_{\infty}-\tilde{T}_1(t_{\infty}),t_{\infty}+\tilde{T}_1(t_{\infty})],$$ and hence:
$$\|u\|_{X^{\alpha}([t-(\tilde{T}_1(t_{\infty})-\tau_{\infty}),t+(\tilde{T}_1(t_{\infty})-\tau_{\infty})])}
\leq \|u\|_{X^{\alpha}([t_{\infty}-\tilde{T}_1(t_{\infty}),t_{\infty}+\tilde{T}_1(t_{\infty})])}$$
$$\leq 2 \|u(t_{\infty})\|_{H^{\alpha}(\Lambda)} \leq 3 \|u(t)\|_{H^{\alpha}(\Lambda)}.$$
Thus, for $t \in [t_{\infty}-\tau_{\infty},t_{\infty}+\tau_{\infty}]$, we can take:
$$\tilde{T}_2(t):=\max \{\tilde{T}_1(t),\tilde{T}_1(t_{\infty})-\tau_{\infty}\}.$$
By construction, $\tilde{T}_2$ does not have a zero limit at $t_{\infty}$. If $\tilde{T}_2$ constructed this way has no limit points, we are done. Otherwise, we repeat the procedure. We note that the function $\tilde{T}_2$ we obtain in the end might take the value $+\infty$, hence we have to replace $\tilde{T}_2$ by $\min \{\tilde{T}_2,1\}$. Such a function $\tilde{T}_2: \mathbb{R} \rightarrow \mathbb{R}$ will be positive and will satisfy $(\ref{eq:Ttilde2})$ and $(\ref{eq:deltar})$. 
The function $\tilde{T}_2$ is not necessarily continuous.

We now construct the function $T$. Let us construct $T$ on $[0,+\infty)$. An analogous argument works on $(-\infty,0]$. Let's start at time $t=0$. By continuity of $t \mapsto \|u(t)\|_{H^{\alpha}(\Lambda)}$, there exists $\kappa_0 \in (0,\frac{1}{2}\tilde{T}_2(0)]$ such that, for all $t \in [0,\kappa_0]$:
$$\|u(t)\|_{H^{\alpha}(\Lambda)} \geq \frac{1}{2} \|u(0)\|_{H^{\alpha}(\Lambda)}.$$
Consequently, for $t \in [0,\kappa_0]$:
$$\|u\|_{X^{\alpha}([t-(\tilde{T}_2(0)-t),t+(\tilde{T}_2(0)-t)])} \leq \|u\|_{X^{\alpha}([-\tilde{T}_2(0),\tilde{T}_2(0)]} \leq 3 \|u(0)\|_{H^{\alpha}(\Lambda)} \leq 6 \|u(t)\|_{H^{\alpha}(\Lambda)}.$$
We want:
\begin{equation}
\label{eq:T0}
0<T(t) \leq \tilde{T}_2(0) - t,\, \mbox{for}\,t \in [0,\kappa_0].
\end{equation}
We continue the construction inductively. Namely, given $\kappa_n$, for some $n \geq 0$, we construct a maximal $\kappa_{n+1}$ which satisfies:
\begin{equation}
\label{eq:kappaI}
\kappa_{n+1} \leq \kappa_n + \tilde{T}_2(\kappa_n).
\end{equation}
and
\begin{equation}
\label{eq:kappaII}
\|u(t)\|_{H^{\alpha}(\Lambda)} \geq \frac{1}{2}\|u(\kappa_n)\|_{H^{\alpha}(\Lambda)}, \mbox{for}\, t \in [\kappa_n,\kappa_{n+1}].
\end{equation}
An analogous argument as above shows that, for $t \in [\kappa_n,\kappa_{n+1}]$, one has:
$$\|u\|_{X^{\alpha}([t-(\tilde{T}_2(\kappa_n)-(t-\kappa_n)),t+(\tilde{T}_2(\kappa_n)-(t-\kappa_n))]} \leq 6 \|u(t)\|_{H^{\alpha}(\Lambda)}.$$
Hence, we want:
\begin{equation}
\label{eq:Tn}
0<T(t) \leq \tilde{T}_2(\kappa_n) - (t-\kappa_n),\, \mbox{for}\,t \in [\kappa_n,\kappa_{n+1}].
\end{equation}
Let $\kappa_{\infty}:=\lim \kappa_n$. We can construct $T:[0,\kappa_{\infty}) \rightarrow \mathbb{R}$ which is positive and continuous that satisfies $(\ref{eq:T0})$ and $(\ref{eq:Tn})$. Namely, we just note that the upper bounds are piecewise linear positive functions of slope $-1$. We want to argue that $\kappa_{\infty}=+\infty$.

We argue by contradiction. In other words, we suppose that $\kappa_n \rightarrow \kappa_{\infty} \in \mathbb{R}$. We know by construction that $\tilde{T}_2$ is bounded away from zero near $\kappa_{\infty}$. Hence, for sufficiently large $n$, one has:
$$\kappa_{n+1} < \kappa_n + \frac{1}{2}\tilde{T}_2(\kappa_{n+1}).$$
Thus, by $(\ref{eq:kappaI})$ and $(\ref{eq:kappaII})$, it follows that, for $n$ sufficiently large, one has:
$$\|u(\kappa_{n+1})\|_{H^{\alpha}(\Lambda)}=\frac{1}{2}\|u(\kappa_n)\|_{H^{\alpha}(\Lambda)}.$$
Namely, we note that equality cannot hold in $(\ref{eq:kappaI})$.
Consequently, $\|u(\kappa_{\infty})\|_{H^{\alpha}(\Lambda)}=0$, hence $u(\kappa_{\infty})=0$. It follows by uniqueness of solutions to the nonlinear Schr\"{o}dinger equation that $u=0$. This is a contradiction. Hence, $T$ can indeed be defined on all of $[0,+\infty)$.
We finally note that $(\ref{eq:lwpa})$ follows from $(\ref{eq:lwp})$.
\end{proof}


\begin{thebibliography}
{References:}
\bibitem{ABGT} R. Adami, C. Bardos, F. Golse, A. Teta, \emph{Towards a rigorous derivation of the cubic nonlinear Schr\"{o}dinger equation in dimension one}, Asymptot. Anal. \textbf{40} (2004), no.2, 93-108.
\bibitem{AGT} R. Adami, F. Golse, A. Teta, \emph{Rigorous derivation of the cubic NLS in dimension one}, J. Stat. Phys. \textbf{127} (2007), no. 6, 1193--1220.
\bibitem{ALSSY1} M. Aizenman, E.H. Lieb, R. Seiringer, J.P. Solovej, J. Yngvason, \emph{Bose-Einstein quantum phase transition in an optical lattice model}, Phys. Rev. A \textbf{70}, 023612 (2004).
\bibitem{ALSSY2} M. Aizenman, E.H. Lieb, R. Seiringer, J.P. Solovej, J. Yngvason, \emph{Bose-Einstein Condensation as a Quantum Phase Transition in an Optical Lattice}, Mathematical Physics of Quantum Mechanics,
Lecture Notes in Physics, Vol. \textbf{690} (2006), pp 199--215.
\bibitem{AN1} Z. Ammari, F. Nier, \emph{Mean field propagation of Wigner measures and BBGKY hierarchy for general boson states}, J. Math. Pures. Appl. \textbf{95} (2011), 585--626.
\bibitem{AN2} Z. Ammari, F. Nier, \emph{Mean field propagation of infinite dimensional Wigner measures with a singular two-body interaction potential
}, preprint \texttt{http://arxiv.org/abs/1111.5918} (2011).
\bibitem{Anapolitanos} I. Anapolitanos, \emph{Rate of Convergence towards the Hartree-von Neumann Limit in the Mean-Field Regime}, Lett. Math. Phys., Oct. 2011, Vol. 98, no. 1, 1--31.
\bibitem{CW} M.H. Anderson, J.R. Ensher, M.R. Matthews, C.E. Wieman, and E.A. Cornell, \emph{Observations of Bose-Einstein condensation in a dilute atomic vapor}, Science \textbf{269} (1995), 198--201.
\bibitem{BGM} C. Bardos, F. Golse, N. Mauser, \emph{Weak coupling limit of the $N$-particle Schr\"{o}dinger equation}, Methods. Appl. Anal. \textbf{7} (2000), 275--293.
\bibitem{Beckner2} W. Beckner, \emph{Multilinear embedding estimates for the fractional Laplacian}, Math. Res. Lett. \textbf{19} (2012), 175--189.
\bibitem{Beckner} W. Beckner, \emph{Convolution estimates and the Gross-Pitaevskii hierarchy}, preprint (2011), \texttt{http://arxiv.org/abs/1111.3857}.
\bibitem{BdOS} N. Benedikter, G. de Oliveira, B. Schlein, \emph{Quantitative derivation of the Gross-Pitaevskii equation},
preprint (2012), \texttt{http://arxiv.org/abs/1208.0373}
\bibitem{BombieriPila} E. Bombieri and J. Pila, \emph{The number of integral points on arcs and ovals}, Duke Math. J.
\textbf{59} (1989), no. 2, 337--357.
\bibitem{Bose} S.N. Bose, \emph{Plancks Gesetz und Lichtquantenhypothese}, Zeitschrift f\"{u}r Physik, \textbf{26} (1924),178.
\bibitem{B} J. Bourgain, \emph{Fourier transform restriction phenomena for certain lattice subsets and applications
to nonlinear evolution equations, I: Schr\"{o}dinger equations}, Geom. Funct. Anal. \textbf{3} (1993), 107--156.
\bibitem{B3} J. Bourgain, \emph{Global Solutions of Nonlinear Schr\"{o}dinger Equations}, AMS Colloquium
Publications \textbf{46}, AMS, Providence, RI, (1999).
\bibitem{CLS} L. Chen, J. O. Lee, B. Schlein, \emph{Rate of convergence towards Hartree dynamics}, J. Stat. Phys. \textbf{144} (2011), 872--903.
\bibitem{CHPS} T. Chen, C. Hainzl, N. Pavlovi\'{c}, R. Seiringer, \emph{Unconditional uniqueness for the cubic Gross-Pitaevskii hierarchy via quantum de Finetti}, preprint (2013), \texttt{http://arxiv.org/abs/1307.3168}.
\bibitem{ChHaPavSei2} T. Chen, C. Hainzl, N. Pavlovi\'{c}, R. Seiringer, \emph{On the well-posedness and scattering for the Gross-Pitaevskii hierarchy via quantum de Finetti},
preprint (2013), \texttt{http://arxiv.org/abs/1311.2136}.
\bibitem{CP1} T. Chen, N. Pavlovi\'{c}, \emph{The quintic NLS as the mean field limit of a Boson gas with three-body interactions}, J. Funct. Anal. \textbf{260} (2011), no. 4, 959--997.
\bibitem{CP2} T. Chen, N. Pavlovi\'{c}, \emph{On the Cauchy problem for focusing and defocusing Gross-Pitaevskii hiearchies}, Discr. Contin. Dyn. Syst. \textbf{27} (2010), no. 2, 715--739.
\bibitem{CP3} T. Chen, N. Pavlovi\'{c}, \emph{Higher order energy conservation and global well-posedness for Gross-Pitaevskii hiearchies}, preprint (2009), \texttt{http://arxiv.org/abs/1111.6222}.
\bibitem{CP1B} T. Chen, N. Pavlovi\'{c}, \emph{On the Cauchy problem for focusing and defocusing Gross-Pitaevskii hiearchies}, Discr. Contin. Dyn. Syst. \textbf{27} (2010), no. 2, 715--739.
\bibitem{CP2B} T. Chen, N. Pavlovi\'{c}, \emph{The quintic NLS as the mean field limit of a Boson gas with three-body interactions}, J. Funct. Anal. \textbf{260} (2011), no. 4, 959--997.
\bibitem{CP3B} T. Chen, N. Pavlovi\'{c}, \emph{Higher order energy conservation and global well-posedness for Gross-Pitaevskii hierarchies}, preprint (2009), \texttt{http://arxiv.org/abs/1111.6222}.
\bibitem{CP4} T. Chen, N. Pavlovi\'{c}, \emph{A new proof of existence of solutions for focusing and defocusing Gross-Pitaevskii hierarchies}, 
Proc. Amer. Math. Soc. \textbf{141} (2013), no. 1, 279Ð-293.
\bibitem{CP} T. Chen, N. Pavlovi\'{c}, \emph{Derivation of the cubic NLS and Gross-Pitaevskii hierarchy from manybody dynamics in $d=2,3$ based on spacetime norms}, preprint (2012), \texttt{http://arxiv.org/abs/1111.6222}.
\bibitem{CPT1} T. Chen, N. Pavlovi\'{c}, N. Tzirakis, \emph{Energy conservation and blowup of solutions for focusing and defocusing Gross-Pitaevskii hierarchies}, Ann. Inst. H. Poincar\'{e} (C), Anal. Non-Lin. \textbf{27} (2010), no. 5, 1271-1290.
\bibitem{CPT2} T. Chen, N. Pavlovi\'{c}, N. Tzirakis, \emph{Multilinear Morawetz identities for the Gross-Pitaevskii hierarchy}, Recent advances in harmonic analysis and partial differential equations, 39--62, Contemp. Math., \textbf{581}, Amer. Math. Soc., Providence, RI, 2012.
\bibitem{CT} T. Chen, K. Taliaferro, \emph{Positive Semidefiniteness and Global Well-Posedness of Solutions to the Gross-Pitaevskii Hierarchy}, preprint (2013), \texttt{http://arxiv.org/abs/1305.1404}.
\bibitem{XC1} X. Chen, \emph{The Grillakis-Machedon-Margetis Second order corrections to mean field evolution for weakly interacting Bosons in the case of 3-body interactions}, preprint (2009),\\ \texttt{http://arxiv.org/abs/0911.4153}.
\bibitem{XC} X. Chen, \emph{Classical Proofs of Kato Type Smoothing Estimates for the Schr\"{o}dinger Equation with Quadratic Potential on $\mathbb{R}^{n+1}$ with Application}, Diff. Int. Eqs. \textbf{24} (2011), 209--230.
\bibitem{XC2} X. Chen, \emph{Second Order Corrections to Mean Field Evolution for Weakly Interacting Bosons in the Case of Three-body
Interactions}, Arch. Rational Mech. Anal. \textbf{203} (2012), 455--497.
\bibitem{XC3} X. Chen, \emph{Collapsing Estimates and the Rigorous Derivation of the 2d Cubic Nonlinear Schr\"{o}dinger Equation with Anisotropic Switchable Quadratic Traps}, J. Math. Pures Appl. (9) \textbf{98} (2012), no. 4, 450--478.
\bibitem{XC4} X. Chen, \emph{On the Rigorous Derivation of the 3D Cubic Nonlinear Schr\"{o}dinger Equation with A Switchable Quadratic Trap}, Arch. Rational. Mech. Anal. \textbf{210} (2013), 356--408.
\bibitem{ChenHolmer1} X. Chen, J. Holmer, \emph{On the rigorous derivation of the 2D cubic nonlinear Schr\"{o}dinger equation from 3D quantum many-body dynamics}, preprint (2012), \texttt{http://arxiv.org/abs/1212.0787}.
\bibitem{ChenHolmer2} X. Chen, J. Holmer, \emph{On the Klainerman-Machedon Conjecture of the Quantum\\ BBGKY Hierarchy with Self-interaction}, preprint (2013), \texttt{http://arxiv.org/abs/1303.5385}.
\bibitem{ChenHolmer3} X. Chen, J. Holmer, \emph{Focusing Quantum Many-body Dynamics: The Rigorous Derivation of the 1D Focusing Cubic Nonlinear Schr\"{o}dinger Equation}, preprint (2013), \texttt{http://arxiv.org/abs/1308.3895}.
\bibitem{CL} Z. Chen, C. Liu, \emph{On the Cauchy problem for Gross-Pitaevskii hierarchies}, J. of Math. Phys. \textbf{52}, Vol 3 (2011), 032103.
\bibitem{DPST} D. De Silva, N. Pavlovi\'{c}, G. Staffilani, N. Tzirakis, \emph{Global well-posedness for a periodic nonlinear
Schr\"{o}dinger equation in 1D and 2D}, Disct. Contin. Dyn. Syst. \textbf{19} (2007), no. 1, 37--65.
\bibitem{Ket} K.B. Davis, M.O. Mewes, M.R. Andrews, N.J. van Druten, D.S. Durfee, D.M. Kurn, and W. Ketterle, \emph{Bose-Einstein Condensation in a Gas of Sodium Atoms}, Phys. Rev. Let. \textbf{75} (1995), no. 22, 3969--3973.
\bibitem{Dodson1} B. Dodson, \emph{Global well-posedness and scattering for the defocusing, $L^{2}$-critical, nonlinear Schr\"{o}dinger equation when $d = 2$}, preprint (2010), \texttt{http://arxiv.org/abs/1006.1375}.
\bibitem{Dodson2} B. Dodson, \emph{Global well-posedness and scattering for the defocusing, $L^{2}$-critical, nonlinear Schr\"{o}dinger equation when $d = 1$}, preprint (2010), \texttt{http://arxiv.org/abs/1010.0040}.
\bibitem{Einstein} A. Einstein, \emph{Quantentheorie des einatomigen idealen Gases}, Sitzungsberichte der Preussischen Akademie der Wissenschaften 1: 3. (1925).
\bibitem{EESY1} A. Elgart, L. Erd\H{o}s, B. Schlein, H.-T. Yau, \emph{Nonlinear Hartree equation as the mean field limit of weakly coupled fermions}, J. Math. Pures Appl. (9) \textbf{83} (2004), no. 10, 1241--1273.
\bibitem{EESY}
A. Elgart, L. Erd\H{o}s, B. Schlein, H.-T. Yau, \emph{Gross-Pitaevskii equation as the mean field limit of weakly coupled bosons} , Arch. Rational Mech. Anal. \textbf{179} (2006), 265-283.
\bibitem{ES}
A. Elgart, B. Schlein, \emph{Mean Field Dynamics of Boson Stars}, Comm. Pure Appl. Math. \textbf{60} (2007), no.4, 500--545.
\bibitem{ESY1} L. Erd\H{o}s, B. Schlein, H.-T. Yau, \emph{Derivation of the Gross-Pitaevskii hierarchy for the dynamics of Bose-Einstein condensate}, Comm. Pure Appl. Math. \textbf{59} (2006), no. 12, 1659--1741.
\bibitem{ESY2} L. Erd\H{o}s, B. Schlein, H.-T. Yau, \emph{Derivation of the cubic non-linear Schr\"{o}dinger equation from quantum dynamics of many-body systems}, Invent. Math. \textbf{167} (2007), no. 3, 515--614.
\bibitem{ESY3} L. Erd\H{o}s, B. Schlein, H.-T. Yau, \emph{Rigorous derivation of the Gross-Pitaevskii equation}, Phys. Rev. Lett. \textbf{98} (2007), no.4, 040404.
\bibitem{ESY4} L. Erd\H{o}s, B. Schlein, H.-T. Yau, \emph{Rigorous derivation of the Gross-Pitaevskii equation with a large interaction potential}, J. Amer. Math. Soc. \textbf{22} (2009), no. 4, 1099--1156
\bibitem{ESY5} L. Erd\H{o}s, B. Schlein, H.-T. Yau, \emph{Derivation of the Gross-Pitaevskii equation for the dynamics of Bose-Einstein condensate}, Ann. of Math. (2), \textbf{172} (2010), no. 1, 291--370.
\bibitem{EY} L. Erd\H{o}s, H.-T. Yau, \emph{Derivation of the nonlinear Schr\"{o}dinger equation from a many body Coulomb system}, Adv. Theor. Math. Phys. \textbf{5} (2001), no.6, 1169--1205.
\bibitem{FeffermanStein} C. Fefferman, E. Stein, \emph{$H^p$ spaces of several variables}, Acta. Math. \textbf{129} (1972), no.3-4, 137--193.
\bibitem{FGS} J. Fr\"{o}hlich, S. Graffi, S. Schwarz, \emph{Mean-field and classical limit of many body Schr\"{o}dinger dynamics for bosons}, Comm. Math. Phys. \textbf{271} (2007), no.3, 681--697.
\bibitem{FKP} J. Fr\"{o}hlich, A. Knowles, A. Pizzo, \emph{Atomism and quantization}, J. Phys. A \textbf{40} (2007), no. 12, 3033-3045.
\bibitem{FKS} J. Fr\"{o}hlich, A. Knowles, S. Schwarz, \emph{On the mean-field limit of bosons with Coulomb two-body interaction}, Comm. Math. Phys. \textbf{288} (2009), no. 3, 1023--1059.
\bibitem{FL} J. Fr\"{o}hlich, E. Lenzmann, \emph{Mean-field limit of quantum Bose gases and nonlinear Hartree equation}, Sem. \'{E}.D.P. 2003-2004, Exp. No. XIX, 26 pp., S\'{e}min. \'{E}qu. D\'{e}riv. Partielles, \'{E}cole Polytech., Palaiseau, 2004.
\bibitem{GV1} J. Ginibre, G. Velo, \emph{The classical field limit of scattering theory for nonrelativistic many-boson systems I}, Comm. Math. Phys. \textbf{66} (1979), no. 1, 37--76.
\bibitem{GV2} J. Ginibre, G. Velo, \emph{The classical field limit of scattering theory for nonrelativistic many-boson systems II}, Comm. Math. Phys. \textbf{68} (1979), no. 1, 45--68.
\bibitem{GM} M. Grillakis, M. Machedon, \emph{Pair excitations and the mean field approximation of interacting Bosons, I}, preprint (2012), \texttt{http://arxiv.org/abs/1208.3763}.
\bibitem{GMM1} M. Grillakis, M. Machedon, D. Margetis, \emph{Second order corrections to mean field evolution of weakly interacting bosons, I}, Comm. Math. Phys. \textbf{294} (2010), 273--301.
\bibitem{GMM2} M. Grillakis, M. Machedon, D. Margetis, \emph{Second order corrections to mean field evolution of weakly interacting bosons, II}, Adv. in Math.
\textbf{228} (2011), no. 3, 1788--1815.
\bibitem{GrillakisMargetis} M. Grillakis, D. Margetis, \emph{A priori estimates for Many-Body Hamiltonian Evolution of Interacting Boson System}, J. Hyperb. Diff. Eqs. \textbf{5} (2008), 857--883.
\bibitem{Gross} E. Gross, \emph{Structure of a quantized vortex in boson systems}, Nuovo Cimento \textbf{20} (1961), 454--466.
\bibitem{GuoOhWang} Z. Guo, T. Oh, Y. Wang, \emph{Strichartz estimates for Schr\"{o}dinger equations on irrational tori}, preprint (2013), \texttt{http://arxiv.org/abs/1306.4973}.
\bibitem{HHK} M. Hadac, S. Herr, H. Koch, \textit{Well-posedness and scattering for the KP-II equation in a critical space}, Ann. Inst. H. Poincar\'{e}, Anal. Non. Lin\'{e}aire, \textbf{26} (2009), no. 3, 917--994.
\bibitem{Hepp} K. Hepp, \emph{The classical limit for quantum mechanical correlation functions}, Comm. Math. Phys. \textbf{35} (1974), 265--277.
\bibitem{HTT} S. Herr, D. Tataru, N. Tzvetkov, \textit{ Global well-posedness of the energy-critical nonlinear Schr\"{o}dinger equation with small initial data in $H^1(\mathbb{T}^3)$}, Duke Math. J. \textbf{159} (2011), no. 2, 329--349.
\bibitem{HewittSavage} E. Hewitt, L.J. Savage, \emph{Symmetric measures on Cartesian products}, Trans. Amer. Math. Soc., \textbf{80} (1955), 470--501.
\bibitem{Huxley} M. N. Huxley, \emph{Area, Lattice Points, and Exponential Sums}, London Math. Soc. Monogr., vol. \textbf{13}, Oxford Univ. Press, London, 1996.
\bibitem{IrelandRosen} K. Ireland and M. Rosen, \emph{A Classical Introduction to Modern Number Theory}, 2nd ed., Grad. Texts in Math., vol. \textbf{84}, Springer-Verlag, New York, 1998.
\bibitem{IP} A. Ionescu, B. Pausader, \emph{The energy-critical defocusing NLS on $\mathbb{T}^3$}, Duke Math. J. \textbf{161} (2012), no. 8, 1581--1612.
\bibitem{KSS} K. Kirkpatrick, B. Schlein, G. Staffilani, \emph{Derivation of the cubic nonlinear Schr\"{o}dinger equation from quantum dynamics of many-body systems: the periodic case}, Amer. J. Math. \textbf{133} (2011), no. 1, 91--130.
\bibitem{KM2} S. Klainerman, M. Machedon, \emph{Space-time estimates for null forms and the local existence theorem} Comm. Pure Appl. Math. \textbf{46}(1993), 169--177.
\bibitem{KM} S. Klainerman, M. Machedon, \emph{On the uniqueness of solutions to the Gross-Pitaevskii hierarchy}, Comm. Math. Phys. \textbf{279}, no.1 (2008), 169--185.
\bibitem{KP} A. Knowles, P. Pickl, \emph{Mean-field dynamics: singular potentials and rate of convergence}, Comm. Math. Phys. \textbf{298}, (2010), 101--139.
\bibitem{KT} H. Koch, D. Tataru, \textit{Dispersive estimates for principally normal pseudodifferential operators}, Comm. Pure Appl. Math., \textbf{58} (2005), no. 2, 217--284.
\bibitem{KT2} H. Koch, D. Tataru, \emph{A priori bounds for the 1D cubic NLS in negative Sobolev spaces}, Internat. Math. Res. Notices (2007), article ID rnm053, 36 pages.
\bibitem{KT3} H. Koch, D.Tataru, \emph{Energy and local energy bounds for the 1-D cubic NLS equation in $H^{-1/4}$}, preprint, (2010),
\texttt{http://arxiv.org/abs/1012.0148}.
\bibitem{Lee} J.O. Lee, \emph{Rate of convergence towards semi-relativistic Hartree dynamics}, Ann. H. Poincar\'{e} (C), \textbf{14} (2013), no.2, 313--346.
\bibitem{LeeYin} J.O. Lee, J. Yin, \emph{A lower bound on the ground state energy of dilute Bose gas}, J. Math. Phys. \textbf{51} (2010), no. 5, 053302, 31 pp.
\bibitem{Lekkerkerker} C. G. Lekkerkerker, \emph{Geometry of numbers}, Walters-Noordhof, Groningen, 1969.
\bibitem{LewinNamRougerie} M. Lewin, P.T. Nam, N. Rougerie, \emph{Derivation of Hartree's theory for generic mean-field Bose systems}, preprint (2013), \texttt{http://arxiv.org/abs/1303.0981}. 
\bibitem{LS} E. Lieb, R. Seiringer, \emph{Proof of Bose-Einstein condensation for dilute trapped gases}, Phys. Rev. Lett. \textbf{88} (2002), 170409-1-4.
\bibitem{LSSY} E. Lieb, R. Seiringer, J.P. Solovej, J. Yngvason, \emph{The mathematics of the Bose gas and its condensation}, Oberwolfach Seminars, \textbf{34}. Birkh\"{a}user Verlag, Basel, 2005.
\bibitem{LSY} E. Lieb, R. Seiringer, J.P. Yngvason, \emph{Bosons in a trap: a rigorous derivation of the Gross-Pitaevskii energy functional}, Phys. Rev. A \textbf{61} (2000), 043602.
\bibitem{LSY2} E. Lieb, R. Seiringer, J. Yngvason, \emph{A rigorous derivation of the Gross-Pitaevskii energy functional for a two-dimensional Bose gas. Dedicated to Joel L. Lebowitz}, Comm. Math. Phys. \textbf{224} (2001), no.1, 17--31.
\bibitem{Liu} B. Liu,
\emph{A-priori bounds for KdV equation below $H^{-\frac{3}{4}}$}, preprint (2011), \\
\texttt{http://arxiv.org/abs/1112.5177}.
\bibitem{Luhrmann} J. Luhrmann, \emph{Mean-field quantum dynamics with magnetic fields}, J. Math. Phys. \textbf{53} (2012), no. 2, 022105, 19 pp.
\bibitem{mcmullen1984} P. McMullen, \emph{Determinants of lattices induced by rational subspaces}, Bull. London Math. Soc. \textbf{16} (1984), no. 3, 275--277.
\bibitem{MichelangeliSchlein} A. Michelangeli, B. Schlein, \emph{Dynamical collapse of Boson stars,} Comm. Math. Phys. \textbf{311}, (2012), 645--687.
\bibitem{NS} A. Nahmod, G. Staffilani, \emph{Randomization in Nonlinear PDE and the Supercritical Periodic Quintic NLS in $3D$}, preprint (2013), \texttt{http://arxiv.org/abs/1308.1169}.
\bibitem{Pickl1} P. Pickl, \emph{Derivation of the time dependent Gross-Pitaevskii equation with external fields}, J. Stat. Phys. \textbf{140} (2010), no. 1, 76Ð-89.
\bibitem{Pickl2} P. Pickl, \emph{A simple derivation of mean field limits for quantum systems}, Lett. Math. Phys. \textbf{97} (2011), no.2, 151--164. 
\bibitem{Pitaevskii} L. Pitaevskii, \emph{Vortex lines in an imperfect Bose gas}, Sov. Phys. JETP \textbf{13} (1961), 451--454.
\bibitem{RodnianskiSchlein} I. Rodnianski, B. Schlein, \emph{Quantum fluctuations and rate of convergence towards mean field dynamics}, Comm. Math. Phys. \textbf{291} (2009), no.1, 31--61.
\bibitem{Schlein} B. Schlein, \emph{Derivation of Effective Evolution Equations from Microscopic Quantum Dynamics}, Lecture notes, CMI 2008 Summer School on Evolution Equations.
\bibitem{schnell1992} U. Schnell, \emph{Minimal determinants and lattice inequalities}, Bull. London Math. Soc. \textbf{24} (1992), no. 6,
606--612.
\bibitem{VS} V. Sohinger, \emph{Local existence of solutions to Randomized Gross-Pitaevskii hierarchies}, preprint (2014), \texttt{http://arxiv.org/abs/1401.0326}.
\bibitem{SoSt} V. Sohinger, G. Staffilani, \emph{Randomization and the Gross-Pitaevskii hierarchy}, preprint (2013), \texttt{http://arxiv.org/abs/1308.3714}.
\bibitem{S} H. Spohn, \emph{Kinetic equations from Hamiltonian Dynamics}, Rev. Mod. Phys. \textbf{52} (1980), no. 3, 569--615.
\bibitem{Tao} T. Tao, \emph{Nonlinear Dispersive Equations: Local and
global analysis}, CBMS Reg. Conf. Series in Math., \textbf{106}, AMS,
Providence, RI, 2006.
\bibitem{Wiener} N. Wiener, \emph{The quadratic variation of a function and its Fourier coefficients}, in: P.R. Masani (Ed.), Collected Works with Commentaries. Volume II: Generalized harmonic analysis and Tauberian Theory; Classical Harmonic and Complex Analysis, in: Mathematicians of Our Time, vol. \textbf{15}, The MIT Press, Cambridge, MA, London, 1979, (1924), XIII, 969 p.
\bibitem{Xie} Z. Xie, \emph{Derivation of a Nonlinear Schr\"{o}dinger Equation with a General power-type nonlinearity}, preprint (2013), \texttt{http://arxiv.org/abs/1305.7240}.
\bibitem{YauYin} H.-T. Yau, J. Yin, \emph{The second order upper bound for the ground energy of a Bose gas}, J. Stat. Phys. \textbf{136} (2009), no. 3, 453--503.
\bibitem{Yin} J. Yin, \emph{The ground state energy of dilute Bose gas in potentials with positive scattering length}, Comm. Math. Phys. \textbf{295} (2010), no. 1, 1--27.
\end{thebibliography}
\end{document}